\numberwithin{equation}{section}
\newtheorem{theorem}{Theorem}[section]
\newtheorem{proposition}[theorem]{Proposition}
\newtheorem{lemma}[theorem]{Lemma}
\newtheorem{definition}[theorem]{Definition}
\theoremstyle{definition}
\newtheorem{remark}[theorem]{Remark}
\newcommand{\R}{\mathbb{R}}
\newcommand{\N}{\mathbb{N}}
\newcommand{\calC}{\mathcal{C}}
\newcommand{\calP}{\mathcal{P}}
\newcommand{\scrD}{\mathscr{D}}
\newcommand{\scrE}{\mathscr{E}}
\newcommand{\scrN}{\mathscr{N}}
\newcommand{\sfF}{\mathsf{F}}
\newcommand{\sfT}{\mathsf{T}}
\newcommand{\leb}{\mathscr{L}}
\newcommand{\Ent}{\mathsf{Ent}}
\newcommand{\Lip}{\mathrm{Lip}}
\renewcommand{\d}{\mathsf{d}}
\newcommand{\lt}{\left}
\newcommand{\rt}{\right}
\newcommand{\bq}{\begin{equation}}
\newcommand{\eq}{\end{equation}}
\newcommand{\pa}{\partial}
\newcommand{\intr}{\int_{\R^d}}
\newcommand{\intrr}{\iint_{\R^d \times \R^d}}
\newcommand{\mc}{\mathcal{C}}
\newcommand{\e}{\varepsilon}
\title[Quantified Overdamped Limit for Vlasov--Fokker--Planck equations]{Quantified overdamped limit for kinetic Vlasov--Fokker--Planck equations with singular interaction forces}
\author[Choi]{Young-Pil Choi$^\dag$}
\address[Young-Pil Choi]{\newline Department of Mathematics\newline
Yonsei University, 50 Yonsei-Ro, Seodaemun-Gu, Seoul 03722, Republic of Korea}
\email{ypchoi@yonsei.ac.kr}
\author[Tse]{Oliver Tse$^\ddag$}
\address[Oliver Tse]{\newline Department of Mathematics and Computer Science\newline
Eindhoven University of Technology, 5600 MB Eindhoven, The Netherlands}
\email{o.t.c.tse@tue.nl}
\date{\today}
\begin{document}

\date{}

\keywords{aggregation-diffusion equation, coarse-graining, hydrodynamic limit, relative entropy techniques, singular interaction, Vlasov--Fokker--Planck equation, Wasserstein techniques}

\begin{abstract} We establish a quantified overdamped limit for kinetic Vlasov--Fokker--Planck equations with nonlocal interaction forces. We provide explicit bounds on the error between solutions of that kinetic equation and the limiting equation, which is known under the names of aggregation-diffusion equation or McKean--Vlasov equation. Introducing an intermediate system via a coarse-graining map, we quantitatively estimate the error between the spatial densities of the Vlasov--Fokker--Planck equation and the intermediate system in the Wasserstein distance of order 2. We then derive an evolution-variational-like inequality for Wasserstein gradient flows which allows us to quantify the error between the intermediate system and the corresponding limiting equation. Our strategy only requires weak integrability of the interaction potentials, thus in particular it includes the quantified overdamped limit of the kinetic Vlasov--Poisson--Fokker--Planck system to the aggregation-diffusion equation with either repulsive electrostatic or attractive gravitational interactions.
\end{abstract}

\maketitle

\tableofcontents

\section{Introduction}
Our starting point is the following kinetic Vlasov--Fokker--Planck equation:
\begin{align}\label{eq:kinetic-rescaled}
	\partial_t\mu_t^\gamma + \gamma\, v\cdot \nabla_x\mu_t^\gamma + \gamma \nabla_v\cdot\bigl(\mu_t^\gamma (\sfF(x,\rho_t^\gamma)-\gamma v)\bigr) = \gamma^2\Delta_v\mu_t^\gamma\,,\qquad (x,v)\in\R^d\times\R^d\,,
\end{align}
where $(\mu^\gamma_t)_{t\ge 0}\subset\calP(\R^d\times\R^d)$ is a family of probability measures on $\R^d \times \R^d$ with $d \geq 1$, and $\gamma > 0$ is the strength of the linear damping in velocity and diffusion. Here, $\rho_t^\gamma = \int_{\R^d}\mu_t^\gamma(\cdot\,, dv)$ denotes the $x$-marginal
of $\mu_t^\gamma$, representing the positional distribution of points, and $\sfF:\R^d \times \calP(\R^d)\to \R^d$ is the driving force of the system, which in our case arises from an external and/or interaction potential. More precisely, we assume $\sfF$ to take the form
\[
\sfF(x,\rho) = -(\nabla\Phi)(x) - (\nabla K\star\rho)(x)\qquad \text{for\, $(x,\rho)\in\R^d\times \calP(\R^d)$}\,,
\]
where $\Phi:\R^d\to\R$ and $K:\R^d\to \R$ are given functions, and
\[
(\nabla K \star \rho)(x) := \intr \nabla K(x-y)\,\rho(dy)\,.
\]

In this paper, we establish quantitative estimates for the Vlasov--Fokker--Planck equation \eqref{eq:kinetic-rescaled}
 in the overdamped regime, i.e.\ in the regime when $\gamma\gg 1$. Note that equation \eqref{eq:kinetic-rescaled} also includes the classical Vlasov--Poisson--Fokker--Planck system when $\nabla K = \zeta x/|x|^d$, $d\ge 3$. Here, the constant $\zeta$ can be chosen $\zeta=\pm 1$ according to applications in either plasma physics or astrophysics.

At the formal level, when $\gamma \to \infty$, it is expected that the kinetic equation \eqref{eq:kinetic-rescaled} will converge to the following well-known {\em aggregation-diffusion} equation:
\begin{align}\label{eq:diffusion}
	\partial_t\rho_t + \nabla_x\cdot \bigl(\rho_t \sfF(\cdot,\rho_t)\bigr) = \Delta_x \rho_t\,,\qquad x\in\R^d\,.
\end{align}
Equations of the type \eqref{eq:diffusion} appear in various contexts, e.g.\ in biological pattern formation \cite{Ma01}, semiconductor equations \cite{MRS90}, models for granular flows and self-gravitating particles \cite{BT1994,CMV03}, hydrodynamic limits of vortex systems \cite{CLMP1992,CLMP1995}, and even in the theory of combustion \cite{BE1989}. The best known classical example is the Keller--Segel model \cite{KS70}, with $K$ satisfying $\Delta K = \delta_0$, which describes the collective motion of chemotaxis. 

More recently, aggregation-diffusion equations have been studied in the framework of gradient flows with respect to the 2-Wasserstein distance defined below \cite{AGS05,CCY19,JKO1998}. Formally, this means that we can recast \eqref{eq:diffusion} into the form
\begin{align*}
	\left\{\quad
	\begin{aligned}
	\partial_t\rho_t + \nabla_x\cdot (\rho_t\mathsf{v}_t) &= 0\,,\\
	\mathsf{v}_t &= -\nabla_x \delta_\rho\scrE(\rho_t)\,,
	\end{aligned}\right.
\end{align*}
where $\delta_\rho\scrE$ is the $L^2$-derivative of the free energy $\scrE$, which in this case takes the form
\begin{align}\label{eq:free-energy-rho}
	\calP(\R^d)\ni \rho\mapsto \scrE(\rho) := \Ent(\rho|\leb^d) + \intr \Phi \,d\rho + \frac{1}{2}\intr K \star \rho\, d\rho\,.
\end{align}
Here, $\Ent(\mu|\nu)$ is the relative entropy of $\mu$ with respect to $\nu$ (cf.\ \eqref{eq:def_entropy} below). We will indeed exploit the gradient flow structure associated to \eqref{eq:diffusion} to prove our results.

\subsubsection*{Formal derivation of the aggregation-diffusion equation} Let us briefly and formally explain why we would expect equation \eqref{eq:diffusion} to arise from \eqref{eq:kinetic-rescaled} as $\gamma \to \infty$. We begin by writing \eqref{eq:kinetic-rescaled} as
\[
\pa_t \mu_t^\gamma + \gamma v \cdot \nabla_x \mu_t^\gamma + \gamma \nabla_v \cdot (\sfF(x,\rho_t^\gamma)\mu_t^\gamma) = \gamma^2 \nabla_v \cdot (\nabla_v \mu_t^\gamma + v \mu_t^\gamma)\,.
\]
Since the right hand side of the above equation can be rewritten as 
\[
\gamma^2 \nabla_v \cdot (\nabla_v \mu_t^\gamma + v \mu_t^\gamma) = \gamma^2 \nabla_v \cdot \lt(\mu_t^\gamma \nabla_v \log\frac{ \mu_t^\gamma}{\scrN^d}\rt)
\]
with the standard $d$-dimensional normal distribution (or Maxwellian)
\[
	\scrN^d(v) = \frac{1}{(2\pi)^{d/2}} e^{-|v|^2/2}\,,
\]
and it has the order $\gamma^2$, we infer that
\bq\label{formal_1}
\mu_t^\gamma(x,v) \simeq \rho_t^\gamma(x)\scrN^d(v)\qquad\text{for $\gamma \gg 1$\,.}
\eq
On the other hand, if we set $m_t^\gamma := \int_{\R^d} v\mu_t^\gamma(\cdot,dv)$, we then find from \eqref{eq:kinetic-rescaled} that
\begin{align}\label{formal_2}
\left\{\quad
\begin{aligned}
\pa_t \rho_t^\gamma + \gamma \nabla_x \cdot m_t^\gamma &= 0\,,\cr
\pa_t m_t^\gamma + \gamma \nabla_x \cdot \lt( \intr v \otimes v\, \mu_t^\gamma(\cdot,dv) \rt) &= \gamma \rho_t^\gamma \sfF(\cdot,\rho_t^\gamma) - \gamma^2 m_t^\gamma\,.
\end{aligned}\right.
\end{align}
We now use the approximation \eqref{formal_1} to obtain
\[
 \gamma \nabla_x \cdot \lt( \intr v \otimes v\, \mu_t^\gamma(\cdot,dv) \rt) \simeq \gamma \nabla_x \rho_t^\gamma  \qquad\text{for $\gamma \gg 1$\,,}
\]
and this shows that the momentum $m_t^\gamma$ roughly satisfies
\[
\pa_t m_t^\gamma + \gamma \nabla_x \rho_t^\gamma \simeq \gamma \rho_t^\gamma \sfF(\cdot,\rho_t^\gamma) - \gamma^2 m_t^\gamma\,.
\]
We use again the fact $\gamma \gg 1$ to neglect the lowest-order term in the above equation, and obtain:
\[
\gamma m_t^\gamma \simeq \rho_t^\gamma \sfF(\cdot,\rho_t^\gamma) -  \nabla_x \rho_t^\gamma\,.
\]
Inserting this into the continuity equation \eqref{formal_2} yields
\[
\pa_t\rho_t^\gamma + \nabla_x\cdot \bigl(\rho_t^\gamma \sfF(\cdot,\rho_t^\gamma)\bigr) \simeq \Delta_x \rho_t^\gamma\,,
\]
which is our limiting equation \eqref{eq:diffusion}.

\subsubsection*{Motivation of \eqref{eq:kinetic-rescaled} from interacting particle systems}
The kinetic Vlasov--Fokker--Planck equation \eqref{eq:kinetic-rescaled} is closely related to classical Newton dynamics for point particles interacting through the force field $\sfF$ in the presence of noise. More precisely, under suitable assumptions on $\Phi$ and $K$, \eqref{eq:kinetic-rescaled} can be derived from the following system of stochastic differential equations \cite{Sznitman1991}:
\begin{align}\label{sde}
\left\{\quad\begin{aligned}
d X^{\gamma,i}_t &= \gamma\,V^{\gamma,i}_t\,dt\,, \cr
\frac{1}{\gamma}\, dV^{\gamma,i}_t &= -\,\nabla \Phi(X^{\gamma,i}_t)\,dt - \frac1N \sum_{j\neq i}  \nabla K (X^{\gamma,i}_t - X^{\gamma,j}_t)\,dt - \gamma V^{\gamma,i}_t\,dt + \sqrt{2}\,dB^i_t\,,
\end{aligned}\right.
\end{align}
where $X^{\gamma,i}_t \in \R^d$ and $V^{\gamma,i}_t \in \R^d$, $i=1,\dots,N$, denote the position and velocity of $i$-th particle at time $t > 0$, respectively, and $B^1_t,\ldots, B^N_t$ are $N$ independent $d$-dimensional Brownian motions. By considering the mean-field limit $N \to \infty$, the so-called nonlinear McKean process
\begin{align}\label{mckean}
	\left\{\quad\begin{aligned}
	dX_t^\gamma &= \gamma\,V_t^\gamma\,dt\,,\\
	\frac{1}{\gamma}\,dV_t^\gamma &= \sfF(X_t^\gamma,\rho_t)\,dt - \gamma V_t^\gamma\,dt + \sqrt{2}\,dB_t\,,
	\end{aligned}\right.
\end{align}
replaces the system \eqref{sde}, where $\rho_t^\gamma = \text{Law}(X_t^\gamma)$ is the law of $X_t^\gamma$, and the time marginal of the process $(X_t^\gamma,V_t^\gamma)_{t\ge 0}$, $\mu_t^\gamma=\text{Law}(X_t^\gamma,V_t^\gamma)$, satisfies \eqref{eq:kinetic-rescaled} in the sense of distributions.

\subsubsection*{Novelty of the current work} 
The study of the overdamped limit has been of interest since the seminal work of Kramers \cite{Kramers1940}. Kramers formally discussed convergence results for the kinetic Fokker--Planck equation ($K=0$), now known as {\em Smoluchowski--Kramers limit}, by introducing a \textit{coarse-graining} map---we consider the same coarse-graining map in this work (see \eqref{eq:intro_cg} below). In \cite{Nelson1967}, Nelson made Kramer's results rigorous by investigating the corresponding stochastic differential equations \eqref{sde}. He showed that a suitable rescaling of the solution to the Langevin equation converges almost surely to the solution of \eqref{eq:diffusion} with $K=0$. Since then, several related results for the case $K=0$ have been proven, by either using stochastic and asymptotic techniques \cite{Freidlin2004,GN2020,HVW2012,Narita1994}, or by means of variational techniques \cite{DLPS18}. The work \cite{DLPS18} is particularly interesting, since it provides both quantitative estimates in the relative entropy and the 2-Wasserstein distance, where the latter is deduced from the former by means of the Talagrand and log-Sobolev inequalities.

In the presence of an interaction potential, a variational technique is proposed in \cite{DLPS17} to study the rigorous limit from \eqref{eq:kinetic-rescaled} to the equation \eqref{eq:diffusion} under suitable regularity and integrability assumptions on the interaction potential $K$, for instance $K \in \mc^2(\R^d) \cap W^{1,1}(\R^d)$ having globally bounded first and second derivatives. This strategy is based on duality methods and compactness arguments via a coarse-graining map, which yields the overdamped limit without convergence rates. In \cite{EM10,Gou05,PoS00}, the qualitative analysis of overdamped limit from the Vlasov--Poisson--Fokker--Planck system towards the drift-diffusion equation in the absence of the potential $\Phi$ is stuided, i.e. from \eqref{eq:kinetic-rescaled} to \eqref{eq:diffusion} with $K$ given as attractive or repulsive Coulomb potential  and $\Phi \equiv 0$. These works are mainly based on the uniform-in-$\gamma$ entropy estimate combined with the compactness arguments, and thus it is not clear how it can be extended or refined to have quantitative error estimates between solutions to \eqref{eq:kinetic-rescaled} and \eqref{eq:diffusion}. 

To the authors' best knowledge, a quantified overdamped limit for the Vlasov--Fokker--Planck equation with smooth nonlocal forces has only been established very recently in \cite{RSX2020}, while the case with singular nonlocal forces remains open. The main purpose of this study is, therefore, to develop a new methodology for the quantified overdamped limit from from \eqref{eq:kinetic-rescaled} to \eqref{eq:diffusion} that works even with irregular or attractive/repulsive singular interaction potentials. We would also like to emphasize that a well-posedness theory for the Vlasov--Fokker--Planck \eqref{eq:kinetic-rescaled} with a required energy dissipation in a weight Sobolev space is established---this is necessary due to the lack of a well-posedness theory in the literature for the case when the external potential $\Phi$ is unbounded.


The aggregation-diffusion equation \eqref{eq:diffusion} can also be rigorously derived from compressible Euler-type equations via large friction limits \cite{CCT19, Cpre, CG07, GLT17}. Despite these fruitful studies on the overdamped limit at the hydrodynamic level,  there are only a few works on the rigorous limit from the kinetic equation to the associated continuity-type equation. In the absence of diffusion, the large friction limit of a Vlasov equation with nonlocal forces is rigorously studied in \cite{Ja00}, and later in \cite{FS15}, where the assumptions on the potentials used in \cite{Ja00} are relaxed. More recently, a quantified large friction limit of a Vlasov-type equation was established in \cite{CC20}, using a pressureless Euler system as an intermediate system.

\subsubsection*{Notation} Throughout the paper, we adopt a slight abuse of notation by identifying the probability measure with its Lebesgue density, i.e.\ $\mu_t(dxdv) = \mu_t(x,v)\,\leb^{2d}(dxdv)$ for simplicity of presentation. Here, $\leb^m$, $m\in\N$ denotes the Lebesgue measure on $\R^m$. By $\calC_c^\infty(\R^m)$, we mean the space of smooth functions with compact support; $\calC_b(\R^m)$ the space of continuous and bounded functions; and $\calC(I;X)$ the space of continuous maps from an interval $I\subset\R$ to a metric space $X$. By $B_r(x)\subset\R^d$ we mean the ball of radius $r>0$ around $x\in\R^d$. For notational simplicity, we set $B_r:=B_r(0)$.

\subsubsection*{Outline of methodology}
In the current work, we provide a rigorous limit from the kinetic equation \eqref{eq:kinetic-rescaled} to the aggregation-diffusion equation \eqref{eq:diffusion} with explicit bounds in the 2-Wasserstein distance defined below. Our methodology hinges upon three main ingredients:

\begin{enumerate}
	\item[(i)] An auxiliary continuity equation induced by a coarse-graining map;
	\item[(ii)] A stability estimate in the 2-Wasserstein distance for solutions to the continuity equation with different velocity fields;
	\item[(iii)] A priori $\gamma$-independent estimates for solutions to \eqref{eq:kinetic-rescaled} in a weighted Sobolev space.
\end{enumerate}

We would like to emphasize that this strategy enables us to take into account the attractive or repulsive Coulomb interaction potential $K$, i.e.\ with $K$ satisfying $\pm\Delta K = \delta_0$. Thus, our result includes the overdamped limit from Vlasov--Poisson--Fokker--Planck system to the classical Keller--Segel equation. In fact, for purely repulsive interactions, we can go beyond the Newtonian singularity, i.e.\ Riesz potentials $K$ can be considered, see Remark \ref{rmk_sing_ex} below for details. 

\medskip

Let us give a brief outline of our strategy. By introducing the map
\begin{align}\label{eq:intro_cg}
	\Gamma^\gamma:\R^d\times\R^d\to\R^d\times \R^d,\quad\Gamma^\gamma(x,v)=(x+v/\gamma,v)\,,\qquad \gamma> 0\,,
\end{align}
we find that the {\em coarse-grained} quantity
 $\bar\rho^\gamma := (\pi^x \circ\Gamma^\gamma)_\# \mu^\gamma$ satisfies an equation of the form
\begin{align}\label{eq:aux-rho0}
\left\{\quad
\begin{aligned}
\partial_t\bar\rho_t^\gamma + \nabla_x\cdot \bar\jmath_t^\gamma &= \Delta_x\bar\rho_t^\gamma\,,\\
\bar\jmath_t^\gamma(dx) &= \intr \sfF(x-v/\gamma,\rho_t^\gamma)\,(\Gamma^\gamma_\#\mu_t^\gamma)(dxdv)\,.
\end{aligned}\right.
\end{align}
Here $\pi^x:\R^d\times\R^d\to\R^d$, $\pi(x,v)=x$ is the projection into the $x$-component, and for a measure $\nu\in\calP(\R^m)$, $f {}_\# \nu\in\calP(\R^k)$ is the push-forward of $\nu$ by a measurable map $f :\R^m \to \R^k$, $m,k\in\N$.

Notice that the coarse-grained system 
resembles the aggregation-diffusion equation \eqref{eq:diffusion}---the essential difference being the drift term---and plays the role of an intermediate system between the kinetic equation \eqref{eq:kinetic-rescaled} and the limiting equation \eqref{eq:diffusion}.  Therefore, the convergence of $\bar\rho^\gamma$ towards $\rho$ is related to the stability of solutions to Fokker--Planck equations with different drifts.

For the quantitative error estimates of solutions, we use the $2$-Wasserstein distance, where for any $p\in[1,\infty)$, the $p$-Wasserstein distance is defined by
\[
	\d_p(\mu, \nu) := \inf_{\pi \in \Pi(\mu, \nu)} \lt(\iint_{\R^m\times\R^m} |x- y|^p \,\pi(dxdy)  \rt)^{1/p}
\]
for any Borel probability measures $\mu$ and $\nu$ on $\R^m$, $m\in\N$, where $\Pi(\mu,\nu)$ is the set of all probability measures on $\R^m \times \R^m$ with first and second marginals $\mu$ and $\nu$, respectively, i.e.\ for all $\varphi\,, \psi \in \mc_b(\R^m)$:
\[
	\iint_{\R^m\times\R^m} (\varphi(x) + \psi(y))\, \pi(dx dy) = \int_{\R^m} \varphi(x)\,\mu(dx) + \int_{\R^m} \psi(y)\,\nu(dy)\,.
\]
We denote by $\mathcal{P}_2(\R^m)$ the set of probability measures in $\R^m$ with bounded second moment. Then $\mathcal{P}_2(\R^m)$ is a complete metric space endowed with the $2$-Wassertein distance $\d_2$ (see \cite{AGS05} or \cite{Vil03} for a more detailed discussion).

By employing the $2$-Wasserstein distance, we first estimate the $\d_2$-distance between $\rho^\gamma$ and $\bar\rho^\gamma$. By our definition of $\bar\rho^\gamma$, we easily obtain the quantitative bound of terms of $\gamma$ and the kinetic energy of $\mu^\gamma$. Indeed, we show in Lemma~\ref{lem_rho2} below that
\[
	\d_2^2(\bar\rho_t^\gamma,\rho_t^\gamma) \le \frac{1}{\gamma^2}\intrr |v|^2\,\mu_t^\gamma(dxdv)\,.
\]
The main difficulty arises from the error estimate between $\bar\rho^\gamma$ and $\rho$, the solution to the limiting equation \eqref{eq:diffusion}. For this, we introduce in Section~\ref{sec:modulated} a {\em modulated interaction energy} 
\[
\scrD_K(\mu,\nu):=\intrr K(x-y)[\mu-\nu](dy)[\mu-\nu](dx)\qquad \text{for $\mu,\nu \in \mathcal{P}(\R^d)$\,,}
\]
and derive in Section~\ref{sec:EVI} a stability estimate of the form 
\bq\label{EVI0}
\frac{1}{2}\frac{d}{dt}\d_2^2(\bar\rho_t^\gamma,\rho_t) \le  \lambda \d_2^2(\bar\rho_t^\gamma,\rho_t) - 2\scrD_K(\bar\rho_t^\gamma,\rho_t) + \frac{1}{2}\|\mathsf{e}_t^\gamma\|_{L^2(\bar\rho_t^\gamma)}^2,
\eq
for some $\lambda>0$, where $\mathsf{e}_t^\gamma$, explicitly given in Lemma~\ref{lem_ee}, is related to the error between the drift fields in  \eqref{eq:diffusion} and \eqref{eq:aux-rho0}. 

We notice that the modulated interaction energy $\scrD_K$ has a positive sign when the interaction potential $K$ gives rise to positive definite kernels---purely repulsive interactions fall within this class of potentials. Thus for the repulsive interaction potentials, we only need to estimate the error term $\|\mathsf{e}_t^\gamma\|_{L^2(\bar\rho_t^\gamma)}$. On the other hand, in the presence of an attractive interaction potential, $\scrD_K$ may take negative values, and hence we require $|\scrD_K(\mu,\nu)|$ to be controlled by $\d_2^2(\mu,\nu)$. This can be achieved for appropriate interaction potentials $K$ as discussed in Section~\ref{sec:modulated} (see Theorem~\ref{thm:DK-interaction}).

%
%
%

In the regular interaction potential case, we do not require higher-order regularity of solutions $\mu^\gamma$ to \eqref{eq:kinetic-rescaled}. However, for taking into account singular interaction potentials, we will need additional regularity of the force field $\sfF$. This requires some uniform-in-$\gamma$ estimates of solutions $\mu^\gamma$. For this, we introduce a Sobolev space $W_H^{k,p}$ weighted by the exponential of the Hamiltonian $H(x,v) := \Phi(x) + |v|^2/2$ (cf.\ Section \ref{sec:assumption-results}). Our careful analysis of solutions $\mu^\gamma$ in this space makes the interaction term $\nabla K \star \rho^\gamma$ Lipschitz continuous and bounded, uniformly in $\gamma$, even if the potential $K$ has a strong singularity at the origin. For details, see Section \ref{sec:uniform-gamma}.

\medskip

Unfortunately, we were unable to apply the above strategy to deduce stability estimates in the 2-Wasserstein distance when $\nabla K$ is only assumed to be bounded. The reason lies in our inability to relate the modulated interaction energy $\scrD_K$ to $\d_2^2$. Nevertheless, we are able to obtain stability estimates with respect to the bounded Lipschitz distance $\d_{\text{BL}}$ by other means. This is addressed in Section~\ref{sec:bdd}.

\subsection{Main assumption and results}\label{sec:assumption-results} In this part, we list our main results depending on the regularity of the interaction potential $K$ in the force field $\sfF$. 

Concerning the potential function $\Phi$, we assume throughout this paper that $0 \leq \Phi \in \Lip_{loc}(\R^d)$, and that
\begin{enumerate}
\item[($\textbf{A}_\Phi^1$)] there exists $c_\Phi>0$ such that
\[
	|\nabla \Phi(x)|\le c_\Phi(1 + |x|)\quad\text{for all $x\in\R^d$}\quad\text{and}\quad \|\nabla\Phi\|_{\Lip}\le c_\Phi\,;
\]
\item[($\textbf{A}_\Phi^2$)] for any $r\in[1,\infty)$:
\[
	c_{\Phi,r}:= \sup_{x\in\R^d}|\nabla\Phi(x)|^r e^{-\Phi(x)}<\infty\,.
\]
\end{enumerate}
Here, we denoted by $\Lip(\R^d)$ the set of Lipschitz functions on $\R^d$ and
\[
\|\phi\|_{\Lip} := \sup_{x \neq y} \frac{|\phi(x) - \phi(y)|}{|x-y|}\qquad\text{for $\phi \in \Lip(\R^d)$\,.}
\]

Note that the assumptions ($\textbf{A}_\Phi$) above allow us to consider both bounded and unbounded external potentials with at most quadratic growth at infinity, i.e.\ we can consider the confinement potential $\Phi(x) = |x|^2/2$. 

\begin{definition}\label{def:weak-solution}
	A continuous curve $\mu\in\calC([0,T];\calP_2(\R^d\times\R^d))$ is called a weak solution to the Cauchy problem for \eqref{eq:kinetic-rescaled}, with initial datum $\mu_0\in \calP_2(\R^d\times\R^d)$, if $\mu|_{t=0} = \mu_0$, and \eqref{eq:kinetic-rescaled} holds in the following sense: for every $s,t\in(0,T)$, and every $\varphi\in\calC_c^\infty(\R^d\times\R^d)$,
	\begin{align*}
		\langle \varphi,\mu_t\rangle - \langle \varphi,\mu_s\rangle = \gamma\int_s^t \intrr \Bigl(v\cdot \nabla_x\varphi + \bigl(\sfF(x,\rho_r^\gamma)-\gamma v\bigr)\cdot \nabla_v\varphi  + \gamma \Delta_v \varphi\Bigr)\,\mu_r(dxdv)\,dr\,.
	\end{align*}

Similarly, we call a continuous curve $\rho\in \calC([0,T];\calP_2(\R^d))$ a weak solution to the Cauchy problem \eqref{eq:diffusion}, with initial datum $\rho_0\in\calP_2(\R^d)$, if $\rho|_{t=0}=\rho_0$, and \eqref{eq:diffusion} holds in the following sense: for every $s,t\in(0,T)$, and every $\varphi\in\calC_c^\infty(\R^d)$,
	\begin{align*}
		\langle \varphi,\rho_t\rangle - \langle \varphi,\rho_s\rangle = \int_s^t \intr \Bigl(\sfF(x,\rho_r)\cdot \nabla_x\varphi  + \Delta_x \varphi\Bigr)\,\rho_r(dx)\,dr\,.
	\end{align*}
\end{definition}

As for the limiting equation, we consider an additional notion of solution, which allows us to make use of the Wasserstein (or Otto) gradient flow structure of \eqref{eq:diffusion}.

\begin{definition}\label{def:regular-solution}
	Let $\scrE$ be the free energy given in \eqref{eq:free-energy-rho}. We call a continuous curve $\rho\in \calC([0,T];\calP_2(\R^d))$ an $\scrE$-regular solution to the Cauchy problem \eqref{eq:diffusion}, with initial datum $\rho_0\in\calP_2(\R^d)\cap\text{dom}(\scrE)$, if it is a weak solution in the sense of Definition~\ref{def:weak-solution}, satisfying additionally $\rho\in L^1((0,T); W^{1,1}(\R^d))$, and
	\[
		\sup_{t\in[0,T]} \scrE(\rho_t) + \sup_{t\in [0,T]} \|\sfF(\cdot,\rho_t)\|_{L^\infty} + \int_0^T \intr \frac{|\nabla\rho_t|^2}{\rho_t}\,d\leb^d\,dt <\infty\,.
	\]
\end{definition}

For any nonnegative Radon measure $\nu$ and probability measure $\mu$ in $\R^m$, $m\in\N$, we set
\begin{align}\label{eq:def_entropy}
\Ent(\mu|\nu) := \begin{cases}
 	\displaystyle\int_{\R^m} h(d\mu/d\nu)\,d\nu &\text{if $\mu\ll \nu$\,,} \\[2mm]
 	+\infty &\text{otherwise,}
 \end{cases}
\end{align}
where $h(s) := s\log s$ for any $s>0$, extended to $h(0):=0$.

\medskip

Our first main result concerns the case $\nabla K \in L^\infty(\R^d) \cap \Lip(\R^d)$. Under this regularity assumption on $K$, the global-in-time well-posedness of weak solutions $\mu^\gamma$ to \eqref{eq:kinetic-rescaled}, for each $\gamma>0$, follows immediately from the standard existence theory for the corresponding nonlinear McKean process \eqref{mckean} \cite{BCC11,Sznitman1991}, and similarly for the weak solution $\rho$ to \eqref{eq:diffusion} \cite{CCY19}.

\begin{theorem}\label{thm_reg} Let $T>0$ and assume that the interaction potential K satisfies $\nabla K \in L^\infty(\R^d) \cap \Lip(\R^d)$. 
Moreover, let the family of initial data $(\mu_0^\gamma)_{\gamma\ge 1}\subset \calP_2(\R^d\times\R^d)$ satisfy
\[
	\sup_{\gamma\ge 1}\,\Ent(\mu_0^\gamma|\scrN^{2d}) <\infty\,.
\]
Then the associated family of unique weak solutions $(\mu^\gamma)_{\gamma\ge 1}\in \calC([0,T];\calP_2(\R^d\times\R^d))$ of \eqref{eq:kinetic-rescaled} satisfies
\[
	\sup_{\gamma\ge 1}\, \sup_{t\in[0,T]}\Ent(\mu_t^\gamma|\scrN^{2d})  <\infty\,.
\]

Moreover, if $\rho \in \mc([0,T];\calP_2(\R^d))$ is the unique $\scrE$-regular solution of \eqref{eq:diffusion} in the sense of Definition~\ref{def:regular-solution}, with initial datum $\rho_0\in\calP_2(\R^d)\cap\text{dom}(\scrE)$, and $\rho_t^\gamma = \pi^x_\#\mu_t^\gamma$ is the first marginal of $\mu_t$, then
	\[
		\sup_{t\in[0, T]}\d_2^2(\rho^\gamma_t, \rho_t) \leq C\lt(\d_2^2(\rho_0^\gamma, \rho_0) + \frac{1}{\gamma^2}\rt),
	\]
	for some constant $C>0$ independent of $\gamma\ge 1$.
%
\end{theorem}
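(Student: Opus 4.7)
The plan is to assemble the three ingredients from the outline: a uniform entropy/kinetic-energy estimate, the coarse-graining error from Lemma~\ref{lem_rho2}, and the EVI-like stability inequality~\eqref{EVI0}, and glue them with the triangle inequality for $\d_2$.

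The first task is the uniform-in-$\gamma$ entropy bound. I would compute $\frac{d}{dt}\Ent(\mu_t^\gamma\,|\,\scrN^{2d})$ along \eqref{eq:kinetic-rescaled}. The confining part of the Maxwellian $\scrN^{2d}(x,v)=(2\pi)^{-d}e^{-(|x|^2+|v|^2)/2}$ absorbs the transport and damping terms, producing a nonpositive velocity-Fisher-information term of order $\gamma^2$, plus error terms coming from the mismatch between the true drift $\sfF(x,\rho_t^\gamma)-\gamma v$ and the reference gradient $-\nabla_{x,v}\log\scrN^{2d}$. Under $(\mathbf{A}_\Phi^1)$ and $\nabla K\in L^\infty\cap\Lip$, the extra drift is Lipschitz and at most linear, so those error terms are controlled by the second moment of $\mu_t^\gamma$ plus a small fraction of the Fisher information, absorbed into the dissipation. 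A standard Grönwall argument then closes the estimate uniformly in $\gamma\ge 1$; in particular $\sup_{\gamma,t}\iint|v|^2\,d\mu_t^\gamma<\infty$ via the Csisz\'ar--Kullback--Pinsker/Talagrand-style bound $\iint|v|^2\,d\mu_t^\gamma\lesssim 1+\Ent(\mu_t^\gamma|\scrN^{2d})$.

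Next, with the coarse-grained measure $\bar\rho^\gamma=(\pi^x\circ\Gamma^\gamma)_\#\mu^\gamma$ defined in \eqref{eq:intro_cg}, Lemma~\ref{lem_rho2} immediately gives
\[
\d_2^2(\bar\rho_t^\gamma,\rho_t^\gamma)\le\frac{1}{\gamma^2}\iint|v|^2\,\mu_t^\gamma(dxdv)\le\frac{C}{\gamma^2},
\]
so it suffices to estimate $\d_2(\bar\rho_t^\gamma,\rho_t)$. For this I use the EVI-like inequality \eqref{EVI0} from Section~\ref{sec:EVI}. In the present regular-$K$ regime, $\nabla K$ is Lipschitz and bounded, so the modulated interaction energy obeys $|\scrD_K(\bar\rho_t^\gamma,\rho_t)|\le C\d_2^2(\bar\rho_t^\gamma,\rho_t)$ (this is exactly the bound Theorem~\ref{thm:DK-interaction} supplies in the Lipschitz case). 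For the error field $\mathsf{e}_t^\gamma$ in Lemma~\ref{lem_ee}, the difference between $\sfF(x,\rho_t^\gamma)$ appearing in \eqref{eq:aux-rho0} and $\sfF(x,\rho_t)$ driving \eqref{eq:diffusion}, together with the shift $x\mapsto x-v/\gamma$, is controlled by the Lipschitz constants of $\nabla\Phi$ and $\nabla K$, the kinetic energy of $\mu_t^\gamma$, and $\d_2(\rho_t^\gamma,\rho_t)$. Thus
\[
\|\mathsf{e}_t^\gamma\|_{L^2(\bar\rho_t^\gamma)}^2\le \frac{C}{\gamma^2}+C\,\d_2^2(\rho_t^\gamma,\rho_t)\le \frac{C}{\gamma^2}+C\,\d_2^2(\bar\rho_t^\gamma,\rho_t),
\]
where the last step uses Lemma~\ref{lem_rho2} again and Young's inequality.

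Plugging these into \eqref{EVI0} yields a differential inequality
\[
\frac{d}{dt}\d_2^2(\bar\rho_t^\gamma,\rho_t)\le C\,\d_2^2(\bar\rho_t^\gamma,\rho_t)+\frac{C}{\gamma^2},
\]
so Grönwall gives $\d_2^2(\bar\rho_t^\gamma,\rho_t)\le e^{CT}\bigl(\d_2^2(\bar\rho_0^\gamma,\rho_0)+C/\gamma^2\bigr)$ on $[0,T]$. Since $\d_2^2(\bar\rho_0^\gamma,\rho_0)\le 2\d_2^2(\rho_0^\gamma,\rho_0)+2\d_2^2(\bar\rho_0^\gamma,\rho_0^\gamma)\le 2\d_2^2(\rho_0^\gamma,\rho_0)+C/\gamma^2$, a final triangle inequality $\d_2(\rho_t^\gamma,\rho_t)\le\d_2(\rho_t^\gamma,\bar\rho_t^\gamma)+\d_2(\bar\rho_t^\gamma,\rho_t)$ produces the claimed bound. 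The main obstacle is really the first step: making sure the entropy dissipation estimate is uniform in $\gamma$ despite the prefactors $\gamma$ and $\gamma^2$ in the kinetic equation, and that the reference Gaussian (which includes spatial confinement) is compatible with the unbounded external potential $\Phi$. The other pieces are essentially bookkeeping once Lemma~\ref{lem_rho2}, Lemma~\ref{lem_ee}, Theorem~\ref{thm:DK-interaction} and \eqref{EVI0} are in hand.
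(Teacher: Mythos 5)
Your proposal follows the paper's strategy for the main estimate: coarse-grain via $\Gamma^\gamma$, control $\d_2(\rho_t^\gamma,\bar\rho_t^\gamma)$ by Lemma~\ref{lem_rho2}, invoke the stability inequality \eqref{eq:W2-stability} together with Theorem~\ref{thm:DK-interaction}(1) and Lemma~\ref{lem_ee}, and close with Gr\"onwall and the triangle inequality; that part is correct and is essentially the paper's proof. Two points differ. First, for the uniform entropy bound the paper does not differentiate $\Ent(\mu_t^\gamma|\scrN^{2d})$ directly: it proves the dissipation estimate relative to the tilted Gaussian $\Gamma^{-\gamma}_\#\scrN^{2d}$ (Lemma~\ref{lem:g-estimate}, i.e.\ Theorem~\ref{thm:entropy-linear}) and transfers back to $\scrN^{2d}$ via the Donsker--Varadhan argument of Remark~\ref{rmk:2m}. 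In your direct computation the free transport tested against $e^{-|x|^2/2}$ produces the $O(\gamma)$ cross term $\gamma\intrr x\cdot v\,d\mu_t^\gamma$, which is \emph{not} simply absorbed by the confinement; one must rewrite it as $\gamma\intrr x\cdot(v+\nabla_v\log\mu_t^\gamma)\,d\mu_t^\gamma$ and split it between the $\gamma^2$-weighted Fisher dissipation and the spatial second moment (itself controlled by the entropy). This can be made to work, but it is precisely the delicate step you flag without carrying out, and the tilted reference measure is how the paper sidesteps it. Second, $\mathsf{e}_t^\gamma$ is not the discrepancy between $\sfF(\cdot,\rho_t^\gamma)$ and $\sfF(\cdot,\rho_t)$: by definition $\mathsf{e}_t^\gamma=d\bar\jmath_t^\gamma/d\bar\rho_t^\gamma-\sfF(\cdot,\bar\rho_t^\gamma)$, and Lemma~\ref{lem_ee} compares $\sfF(x,\rho_t^\gamma)$ with $\sfF(x+v/\gamma,\bar\rho_t^\gamma)$; the comparison with $\rho_t$ is already encoded in the EVI via the doubling argument. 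Using \eqref{eq:smooth-property-2} and Lemma~\ref{lem_rho2} one then gets the cleaner bound $\|\mathsf{e}_t^\gamma\|_{L^2(\bar\rho_t^\gamma)}^2\le 2c_{\sfF}^2\gamma^{-2}\intrr|v|^2\,d\mu_t^\gamma$ with no $\d_2^2(\rho_t^\gamma,\rho_t)$ contribution; your weaker bound still closes under Gr\"onwall, so this is harmless, but the correct identification matters if one wants to apply Theorem~\ref{thm:EVI-like} as stated.
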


 The first part of Theorem~\ref{thm_reg} provides uniform-in-time $\gamma$-independent moment estimates for the family of solutions $(\mu_t^\gamma)_{\gamma\ge 1}$ obtained from standard existence theory mentioned above, which is essential for establishing the second part of the statement---the constant $C>0$ in the theorem depends explicitly on the moment bounds (see Section~\ref{sec:bL}).



\medskip

Next, we consider singular interaction potentials, e.g.\ $\nabla K \in L^q_{loc}(\R^d)$. As mentioned above, we require more regular solutions $\mu^\gamma$ of the kinetic equation \eqref{eq:kinetic-rescaled}. For $p \in [1,\infty)$, we denote by $L_H^p:=L^p_H(\R^d \times \R^d)$ the space of measurable functions whose $p$-th powers are weighted by the exponential of the Hamiltonian $H(x,v) = \Phi(x) + |v|^2/2$, and equipped with the norm
\[
\|\mu\|_{L^p_H} := \lt(\intrr \mu^p e^{(p-1)H}\,d\leb^{2d} \rt)^{1/p}.
\]
For any $k\in\N$, $W^{k,p}_{x,H}:=W^{k,p}_{x,H}(\R^d \times \R^d)$ represents $L^p_H$ Sobolev space of $k$-th order in $x$, i.e.
\[
\|\mu\|_{W^{k,p}_{x,H}}:= \lt(\sum_{|\alpha| \leq k} \intrr |\nabla_x^\alpha \mu|^p e^{(p-1)H}\,d\leb^{2d}   \rt)^{1/p}.
\]
We also denote by $W^{k,p}_x := W^{k,p}_{x,0}$, i.e.
\[
\|\mu\|_{W^{k,p}_{x}}:= \lt(\sum_{|\alpha| \leq k} \intrr |\nabla_x^\alpha \mu|^p \,d\leb^{2d}   \rt)^{1/p}.
\]

\medskip

We now state theorems that hold for singular interaction forces.

\begin{theorem}\label{thm:well-posedness}
	Let $T>0$ and suppose that the interaction potential $K$ satisfies
	\bq\label{K_condi}
\|\nabla K\|_{L^q(B_{2R})} + \|\nabla K\|_{W^{1,\infty}(\R^d\setminus B_R)} <\infty \quad\text{for some $R>0$ and $q\in(1,\infty]$.}
		\eq
	Let the family of initial data $(\mu_0^\gamma)_{\gamma\ge 1}\subset (\calP_2\cap W_{x,H}^{1,p})(\R^d\times\R^d)$ with $p\ge \max\{2,q/(q-1)\}$ satisfy
\[
	\sup_{\gamma\ge 1}\, \Bigl\{\Ent(\mu_0^\gamma|\scrN^{2d}) + \|\mu_0^\gamma\|_{W_{x,H}^{1,p}}\Bigr\} <\infty\,.
\] 
	There exists a positive time $T_p\in(0,T]$, and a family of unique weak solutions
\[
	(\mu^\gamma)_{\gamma\ge 1}\in \calC([0,T_p];\calP_2(\R^d\times\R^d))\cap L^\infty([0,T_p];W^{1,p}_{x,H}(\R^d \times \R^d))\quad\text{to\; \eqref{eq:kinetic-rescaled}}
\]
satisfying
\[
	\sup_{\gamma\ge 1}\, \sup_{t\in[0,T_p]}\Bigl\{\Ent(\mu_t^\gamma|\scrN^{2d}) + \|\mu_t^\gamma\|_{W_{x,H}^{1,p}}\Bigr\} <\infty\,.
\]
\end{theorem}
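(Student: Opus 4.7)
The plan is to construct solutions by regularizing the interaction kernel. Set $K_\e := K * \eta_\e$ with $\eta_\e$ a standard mollifier; then $K_\e$ is smooth with globally bounded derivatives, so the well-posedness theory used in Theorem~\ref{thm_reg} yields a unique smooth solution $\mu^{\gamma,\e}$ of \eqref{eq:kinetic-rescaled} with $K$ replaced by $K_\e$. I would derive a priori bounds on $\Ent(\mu_t^{\gamma,\e}|\scrN^{2d})$ and $\|\mu_t^{\gamma,\e}\|_{W^{1,p}_{x,H}}$ that are uniform in $\gamma\ge 1$ and $\e>0$ on a common interval $[0,T_p]$, and then pass to the limit $\e\to 0$ by weak compactness and lower semicontinuity of these functionals. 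Uniqueness follows from a linear $L^2_H$-stability argument once the force $\sfF(\cdot,\rho^\gamma)$ has been shown to be bounded and Lipschitz uniformly in $\gamma$.

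Two a priori estimates drive the argument. First, a direct computation gives
\[
\frac{d}{dt}\Ent(\mu_t^{\gamma,\e}|\scrN^{2d}) + \gamma^2\intrr \mu_t^{\gamma,\e}\lt|\nabla_v\log(\mu_t^{\gamma,\e}/\scrN^{2d})\rt|^2 d\leb^{2d} = \gamma\intrr \sfF(x,\rho_t^{\gamma,\e})\cdot\nabla_v\log(\mu_t^{\gamma,\e}/\scrN^{2d})\,d\mu_t^{\gamma,\e},
\]
and after Young's inequality (absorbing half of the $\gamma^2$-dissipation) one is left with a remainder controlled by $\|\sfF(\cdot,\rho^{\gamma,\e})\|_{L^\infty}^2$, the linear growth of $\nabla\Phi$ being absorbed by a standard second-moment bound closed via Gronwall. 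Second, differentiating \eqref{eq:kinetic-rescaled} in $x$ and testing against $p|\nabla_x\mu^{\gamma,\e}|^{p-2}\nabla_x\mu^{\gamma,\e}\,e^{(p-1)H}$, one aims at an estimate of the form
\[
\frac{d}{dt}\|\mu_t^{\gamma,\e}\|_{W^{1,p}_{x,H}}^p \le C\bigl(1+\|\mu_t^{\gamma,\e}\|_{W^{1,p}_{x,H}}^p\bigr)^\alpha,
\]
where the $O(\gamma)$ transport and $O(\gamma^2)$ friction-diffusion terms must combine into non-positive dissipation thanks to the Hamiltonian identity $\nabla_v H = v$ and integration by parts against the weight $e^{(p-1)H}$; the assumption $p\ge 2$ makes the test function admissible via a routine truncation.

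The essential input for both estimates is the control of $\sfF$ by $\|\mu^{\gamma,\e}\|_{W^{1,p}_{x,H}}$. Using a smooth cutoff $\chi$ with $\chi\equiv 1$ on $B_R$ and $\mathrm{supp}(\chi)\subset B_{2R}$, one splits $\nabla K = \chi\nabla K + (1-\chi)\nabla K$: by \eqref{K_condi} the first piece lies in $L^q(\R^d)$ and the second in $W^{1,\infty}(\R^d)$. Young's convolution inequality together with the Hölder conjugacy $p\ge q/(q-1)$ yields $\|(\chi\nabla K)\star\rho\|_{L^\infty}\le \|\chi\nabla K\|_{L^q}\|\rho\|_{L^p}$, and Jensen's inequality against the Gaussian density $\scrN^d(v)$ gives
\[
\|\rho^{\gamma,\e}\|_{L^p} \le C\,\|\mu^{\gamma,\e}\|_{L^p_H},
\]
while the smooth tail contributes only through total mass and its Lipschitz constant. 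Applying the same decomposition to $\nabla_x(\nabla K\star\rho) = \nabla K\star\nabla_x\rho$ produces the Lipschitz bound $\|\nabla_x\sfF\|_{L^\infty}\le C(1+\|\mu^{\gamma,\e}\|_{W^{1,p}_{x,H}})$. Plugging these back into the entropy and Sobolev inequalities closes a nonlinear Gronwall loop, giving uniform bounds on a maximal interval $[0,T_p]$ independent of $\gamma$ and $\e$. \emph{The principal obstacle} is the uniform-in-$\gamma$ weighted Sobolev estimate itself: since $e^{(p-1)H}$ grows at infinity in both variables, the $\gamma$-scaled kinetic transport has to be balanced delicately against the friction-diffusion pair $\gamma\nabla_v\cdot(v\mu)-\gamma^2\Delta_v\mu$, and failing this balance would produce $\gamma$-growing terms that destroy the quantified overdamped limit. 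This careful bookkeeping is precisely what renders the well-posedness statement useful as $\gamma\to\infty$.
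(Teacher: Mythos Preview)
Your strategy is essentially the one in the paper: regularize the interaction, obtain uniform-in-$\gamma$ a~priori bounds on the entropy and the weighted $W^{1,p}_{x,H}$ norm by exploiting the Hamiltonian structure of the weight (the paper's Lemma~5.2 shows precisely that the $O(\gamma)$ transport term and the $O(\gamma)$ external-force term cancel exactly against $e^{(p-1)H}$, while the $O(\gamma^2)$ friction--diffusion block is separately dissipative), control $\|\nabla K\star\rho\|_{W^{1,\infty}}$ by $\|\mu\|_{W^{1,p}_{x,H}}$ via the near/far cutoff, and pass to the limit. Two technical points are handled more carefully in the paper: (i) it also regularizes $\Phi$ and takes compactly supported smooth initial data, so that the approximate solutions are genuinely classical (via \cite{VO90}) and the weighted-Sobolev testing is justified---your appeal to Theorem~\ref{thm_reg} gives only weak solutions, which is not quite enough to run the $|\nabla_x\mu|^{p-2}\nabla_x\mu\,e^{(p-1)H}$ computation; (ii) passing to the limit in the bilinear term $(\nabla K\star\rho^\e)\mu^\e$ requires more than weak compactness of $\mu^\e$: the paper obtains \emph{strong} $L^1$ compactness of $\rho^\e$ through a Green-function representation and Kolmogorov--Riesz--Fr\'echet, though your uniform $W^{1,p}$ bound on $\rho^\e$ (inherited from $\mu^\e$) would also suffice via Rellich--Kondrachov plus tightness.

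One small correction: your entropy identity is missing the term coming from the $x$-transport against the reference $\scrN^{2d}$. The correct balance reads
\[
\frac{d}{dt}\Ent(\mu_t|\scrN^{2d}) + \gamma^2\!\intrr\!\mu_t\bigl|\nabla_v\log(\mu_t/\scrN^{2d})\bigr|^2 d\leb^{2d} = \gamma\!\intrr\!\bigl(x+\sfF(x,\rho_t)\bigr)\cdot\nabla_v\log(\mu_t/\scrN^{2d})\,d\mu_t,
\]
which is still absorbable by Young and a second-moment Gr\"onwall loop; the paper sidesteps this by working instead with $\Ent(\mu_t|\Gamma^{-\gamma}_\#\scrN^{2d})$, which is equivalent up to $\gamma$-independent constants.
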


We note that Theorem \ref{thm:well-posedness} includes the Vlasov--Poisson--Fokker--Planck system (i.e.\ \eqref{eq:kinetic-rescaled} with $\Phi\equiv 0$) with either repulsive electrostatic or attractive gravitational interactions:
\[
	\nabla K(x) = \pm \frac{x}{|x|^d},\qquad d\ge 1\,.
\]
We refer to \cite{CS95, PS00, Vic91} for the global-in-time existence of weak solutions for the Vlasov--Poisson--Fokker--Planck system. For interaction potentials that are less singular than Coulomb (see \eqref{less_sing} below), we refer to \cite{CCS19} for the existence of weak solutions.

On the other hand, our result applies to the case with possibly unbounded external potentials $\Phi$, and provides bounds in the weighted Sobolev space $W_{x,H}^{1,p}$. More importantly, these bounds are uniform in $\gamma\ge1$, which plays an essential role in our quantified error estimate. In a nutshell, the existence and uniqueness result follows from a truncation procedure, for which the truncated equation admits unique global-in-time smooth solutions (cf.\ \cite{Bou93,Degond86,VO90}), and by a careful analysis when removing the truncation, while maintaining the $\gamma$-independent bounds.

\medskip

The following result summarizes the quantified error estimate with respect to the 2-Wasserstein distance for various types of singular interaction potentials $K$.

\begin{theorem}\label{thm:stability-limit} Let the assumptions of Theorem~\ref{thm:well-posedness} be satisfied with $p>\max\{d,q/(q-1)\}$ and $T_p>0$ be as given in Theorem~\ref{thm:well-posedness}, and $(\mu^\gamma)_{\gamma\ge 1}$ be the family of weak solutions to \eqref{eq:kinetic-rescaled} given by that theorem.

Suppose that the interaction potential $K$ additionally satisfies one of the following conditions:
\begin{enumerate}[label=(\roman*)]
	\item (Weakly singular) $\nabla K|_{B_{2R}}\in W^{1,1}(B_{2R})$ for some $R<\infty$,
	\item (Purely repulsive) $K$ is positive definite, i.e.\ $\scrD_K(\mu,\nu)\ge 0$ for any $\mu,\nu\in\calP(\R^d)$, or
	\item (Attractive Newtonian) $K$ is given by the Newtonian potential, i.e.\ $\Delta K = \delta_0$, where $\delta_0$ denotes the Dirac measure on $\R^d$ giving unit mass to the origin.
\end{enumerate}

If $\rho \in \calC([0,T_p];\calP_2(\R^d))$ is the unique $\scrE$-regular solution of \eqref{eq:diffusion} in the sense of Definition~\ref{def:regular-solution}, with initial datum $\rho_0\in \calP_2(\R^d)\cap\text{dom}(\scrE)$, then 
 \[
		\sup_{0 \leq t \leq T_p}\d_2^2(\rho^\gamma_t, \rho_t) \leq C\lt(\d_2^2(\rho^\gamma_0, \rho_0) + \frac{1}{\gamma^2}\rt),
	\]
	for some constant $C>0$ independent of $\gamma\ge 1$.
\end{theorem}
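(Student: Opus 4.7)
The plan is to implement the three-ingredient strategy outlined in the introduction, using the uniform-in-$\gamma$ regularity supplied by Theorem~\ref{thm:well-posedness} to justify every step. I would start from the triangle inequality
\[
\d_2^2(\rho_t^\gamma,\rho_t)\le 2\,\d_2^2(\rho_t^\gamma,\bar\rho_t^\gamma)+2\,\d_2^2(\bar\rho_t^\gamma,\rho_t),
\]
which decouples the analysis into a (kinetic $\to$ coarse-graining) leg and a (coarse-graining $\to$ limit) leg. The first leg is immediate from Lemma~\ref{lem_rho2}: $\d_2^2(\bar\rho_t^\gamma,\rho_t^\gamma)\le \gamma^{-2}\intrr |v|^2\,\mu_t^\gamma(dxdv)$, and the uniform bound $\sup_{t,\gamma}\Ent(\mu_t^\gamma|\scrN^{2d})<\infty$ from Theorem~\ref{thm:well-posedness}, combined with the fact that relative entropy with respect to a standard Gaussian controls the second moment, gives $\sup_{t,\gamma}\intrr |v|^2\,\mu_t^\gamma \le C$. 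Hence the first leg is $O(\gamma^{-2})$.

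For the second leg I invoke the EVI-like inequality \eqref{EVI0}, which reduces matters to controlling (a) the drift-error $\|\mathsf{e}_t^\gamma\|_{L^2(\bar\rho_t^\gamma)}$ and (b) the modulated interaction energy $\scrD_K(\bar\rho_t^\gamma,\rho_t)$. For (a), the explicit form of $\mathsf{e}_t^\gamma$ in Lemma~\ref{lem_ee} expresses it as a combination of a spatial shift of size $1/\gamma$ in the argument of $\sfF$ and a change of measure from $\rho_t^\gamma$ to $\rho_t$. The spatial shift contributes $O(1/\gamma)$ provided $\sfF(\cdot,\rho_t^\gamma)$ is Lipschitz uniformly in $\gamma$, which is the content of the uniform-in-$\gamma$ estimate in $W^{1,p}_{x,H}$ with $p>\max\{d,q/(q-1)\}$ from Theorem~\ref{thm:well-posedness}: Morrey's embedding combined with the convolution structure of $\nabla K\star\rho^\gamma$ under \eqref{K_condi} yields the required uniform Lipschitz bound on the interaction force. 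The change-of-measure contribution is $O(\d_2(\rho_t^\gamma,\rho_t))$, which is absorbed using the triangle inequality and the first-leg estimate.

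For (b), I would apply the trichotomy referenced as Theorem~\ref{thm:DK-interaction}. In the purely repulsive case (ii) the kernel is positive definite, so $-\scrD_K\le 0$ and the term may be dropped. In the weakly singular case (i), the local $W^{1,1}$-regularity of $\nabla K$ on $B_{2R}$ together with the bounds on $\bar\rho_t^\gamma$ (inherited from the uniform $W^{1,p}_{x,H}$-control of $\mu^\gamma$) and on $\rho_t$ (from the $\scrE$-regular assumption) yields $|\scrD_K(\bar\rho_t^\gamma,\rho_t)|\le C\,\d_2^2(\bar\rho_t^\gamma,\rho_t)$. In the attractive Newtonian case (iii) one exploits the identity $\Delta K=\delta_0$, which expresses $\scrD_K$ as an $\dot H^{-1}$-type pairing that can be reorganized via integration by parts and controlled by $\|\rho_t\|_{L^\infty}\,\d_2^2(\bar\rho_t^\gamma,\rho_t)$, making use of $\sup_t\|\rho_t\|_{L^\infty}<\infty$ from Definition~\ref{def:regular-solution}.

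Combining (a) and (b) in \eqref{EVI0} and folding in the first-leg bound yields a Grönwall inequality of the form
\[
\frac{d}{dt}\d_2^2(\bar\rho_t^\gamma,\rho_t) \le C\,\d_2^2(\bar\rho_t^\gamma,\rho_t) + \frac{C}{\gamma^2},
\]
from which the claimed estimate follows after integrating from $0$ to $t\in[0,T_p]$ and reapplying the triangle inequality with $\d_2^2(\bar\rho_0^\gamma,\rho_0)\le 2\d_2^2(\rho_0^\gamma,\rho_0)+2C\gamma^{-2}$. The hard part is case (iii): because $\scrD_K$ is not sign-definite for the attractive Newtonian potential, the bound $|\scrD_K|\le C\,\d_2^2$ is delicate and depends crucially on both the algebraic structure $\Delta K=\delta_0$ and on having $L^\infty$-regularity of the limit $\rho$, rather than merely $L^p$-bounds; without this one cannot close the Grönwall loop at the $2$-Wasserstein level.
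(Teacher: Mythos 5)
Your overall architecture coincides with the paper's: triangle inequality through the coarse-grained density $\bar\rho^\gamma$, Lemma~\ref{lem_rho2} plus the entropy--moment bound for the first leg, the EVI-like stability inequality \eqref{eq:W2-stability} for the second leg, the trichotomy of Theorem~\ref{thm:DK-interaction} for $\scrD_K$, and Gr\"onwall. The treatment of $\scrD_K$ in cases (i)--(iii) is essentially the paper's argument (modulo the minor point that the $L^\infty$-bounds on the densities come from $p>d$ and Lemma~\ref{lem:lp-rho-u} via Morrey, not from Definition~\ref{def:regular-solution}).

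There is, however, a genuine gap in your estimate of the drift error $\mathsf{e}_t^\gamma$. Lemma~\ref{lem_ee} reduces it to bounding $\sfF(x,\rho_t^\gamma)-\sfF(x+v/\gamma,\bar\rho_t^\gamma)$ in $L^2(\mu_t^\gamma)$, and you split this into a spatial shift (handled by the uniform Lipschitz bound on $x\mapsto\sfF(x,\rho_t^\gamma)$, which is fine) and a ``change of measure'' from $\rho_t^\gamma$ to $\bar\rho_t^\gamma$ (note: not to $\rho_t$), which you claim is $O(\d_2)$. That claim is unjustified for singular kernels: a uniform Lipschitz bound on $x\mapsto(\nabla K\star\rho^\gamma)(x)$ says nothing about the dependence of $\nabla K\star\mu$ on $\mu$, and for $\nabla K$ merely in $L^q_{loc}$ (e.g.\ Riesz kernels with $\alpha$ close to $d-1$ in the purely repulsive case (ii)) there is no estimate of $\|\nabla K\star(\mu-\nu)\|_{L^\infty}$, nor even of the relevant $L^2$ quantity, by $\d_2(\mu,\nu)$. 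This is precisely where the paper's Lemma~\ref{lem:K-infty-lipschitz} enters: it exploits that $\rho_t^\gamma$ and $\bar\rho_t^\gamma$ are marginals of the \emph{same} density $\mu_t^\gamma$ related by the explicit shift $x\mapsto x+v/\gamma$, so that
\[
(\nabla K\star\rho_t^\gamma)(x-v/\gamma)-(\nabla K\star\bar\rho_t^\gamma)(x)
=\frac1\gamma\int_0^1\!\!\intrr (D^2K)\bigl((x-z)+\theta(w-v)/\gamma\bigr)(w-v)\,\mu_t^\gamma(dzdw)\,d\theta\,,
\]
and then integrates by parts near the singularity to move one derivative from $K$ onto $\mu_t^\gamma$, using the uniform $W_{x,H}^{1,q'}$-bound from Theorem~\ref{thm:well-posedness} (and assumption $(\textbf{A}_\Phi^2)$ to absorb the $\nabla\Phi$ term produced by the weight). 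This yields the pointwise bound $\frac{c_0}{\gamma}(1+|v|)(1+M(\mu_t^\gamma))$, hence $\|\mathsf{e}_t^\gamma\|_{L^2(\bar\rho_t^\gamma)}^2\le C\gamma^{-2}$ in all three cases. Without this (or an equivalent) argument, your Gr\"onwall loop does not close in case (ii), and closes in cases (i) and (iii) only after a further nontrivial $L^2$-estimate on $\nabla K\star(\rho_t^\gamma-\bar\rho_t^\gamma)$ that you have not supplied.
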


Several remarks regarding Theorem~\ref{thm:stability-limit} are in order:

\begin{remark}
	The local-in-time existence of $\scrE$-regular solutions $\rho$ to \eqref{eq:diffusion} can be deduced from the uniform-in-$\gamma$ estimates obtained in Theorem~\ref{thm:well-posedness} via compactness arguments. The uniqueness of such solutions for singular interactions $K$ is, however, a subtle business (see e.g.\ \cite{CLM2014} for the Newtonian potential). Since the well-posedness of the limiting equation is not the main focus of our work, we refer the reader to \cite{CCY19}. That being said, in the absence of uniqueness, the estimate in Theorem~\ref{thm:stability-limit} still holds true, though only for a subsequence of $\gamma\ge1$.
\end{remark}

\begin{remark}
	The assumption $p>d$ in Theorem~\ref{thm:stability-limit} is only necessary to obtain uniform $L^\infty$-bounds via Sobolev embeddings required for relating the modulated interaction energy $\scrD_K$ with the 2-Wasserstein distance $\d_2^2$, and may be removed if one knows a priori that $\gamma$-independent $L^\infty$-bounds hold true.
\end{remark}

\begin{remark}\label{rmk_sing_ex} 
The assumption on $K$ in Theorem \ref{thm:stability-limit} allows us to consider the Riesz potentials, 
\[
	K(x) = \frac{1}{|x|^\alpha} \qquad \mbox{with } -1 \leq \alpha < d-1
\]
for the purely repulsive cases, and
\[
	K(x) = -\frac{1}{|x|^\alpha} \qquad \mbox{with } -1 \leq \alpha \leq d-2
\]
for the purely attractive cases ($\alpha=d-2$ being the Coulomb potential). Indeed, for the repulsive case, we only need to check the condition \eqref{K_condi}. Since we can always find $q \in (1,\infty]$ such that $(\alpha+1)q< d$ when $\alpha <  d-1$, we have
\[
|\nabla K (x)| =  \frac{|\alpha|}{|x|^{\alpha+1}} \in L^q(B_{2R})\qquad\text{for some $R>0$.}
\]
It is readily seen that $\nabla K \in W^{1,\infty}(\R^d \setminus B_R)$. For the attractive case, we need to verify the conditions \eqref{K_condi} and Theorem \ref{thm:stability-limit} (i) or (iii). Similarly as before, condition \eqref{K_condi} imposes $\alpha <  d-1$. If $\alpha = d-2$, then $K$ satisfies the attractive Poisson equation (up to a constant), and we are done. On the other hand, if $- 1 \leq \alpha < d-2$, then 
\[
|\nabla^2 K (x)| \leq \frac{C}{|x|^{\alpha+2}} \in L^1(B_{2R})
\]
for some constant $C>0$, where the integrability holds since $\alpha+2 < d$, thus validating condition Theorem \ref{thm:stability-limit} (i). In particular, we can consider interaction potentials $K$ that are made up of combinations of repulsive and attractive potentials, satisfying
\bq\label{less_sing}
	|\nabla K(x)| \leq \frac{C}{|x|^\alpha} \quad \mbox{and} \quad |\nabla^2 K(x)| \leq \frac{C}{|x|^{\alpha+1}}  \qquad \mbox{with } 0 \leq \alpha < d-1\,.
\eq
\end{remark}



\medskip

The final result in the present work is a result concerning the case when $\nabla K$ is bounded and does not possess any additional regularity. The stability estimate holds in the bounded Lipschitz norm defined, for any finite measure $\mu$ and $\nu$ on $\R^d$, by
\[
	\d_{\text{BL}}(\mu,\nu):= \sup\Bigl\{ |\langle f,\mu-\nu\rangle|\,:\, \|f\|_{\Lip}\le 1,\; \|f\|_{L^\infty}\le 1\Bigr\}.
\]

\begin{theorem}\label{thm_wreg} Let $\nabla K\in L^\infty(\R^d)$ and $\Phi$ satisfy additionally $e^{-\Phi}\in L^1(\R^d)$. Further, let $(\mu_t^\gamma)_{t\in[0,T]}$ be a family of weak solutions to \eqref{eq:kinetic-rescaled} satisfying
	\[
		\sup_{\gamma \ge 1}\sup_{t\in[0,T]}\lt\{\|\nabla_x\mu_t^\gamma\|_{L_H^2} + \intrr |v|^2\, \mu_t^\gamma(dxdv)\rt\} <\infty\,.
	\]
If $(\rho_t)_{t\in[0,T]}$ is the unique $\scrE$-regular solution of \eqref{eq:diffusion}, then
\[
\sup_{t\in[0,T]} \d_{\text{BL}}^2(\rho^\gamma_t, \rho_t) \le C\lt(\Ent\bigl((\pi^x\circ \Gamma^\gamma)_\#\mu_0^\gamma\big|\rho_0\bigr) + \frac{1}{\gamma^2}\rt),
\]
for some constant $C>0$ independent of $\gamma\ge 1$.
%
%
%
\end{theorem}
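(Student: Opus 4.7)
The plan hinges on using the coarse-grained density $\bar\rho^\gamma := (\pi^x \circ \Gamma^\gamma)_\# \mu^\gamma$ from \eqref{eq:aux-rho0} as an intermediate object. By the triangle inequality,
\[
\d_{\text{BL}}(\rho_t^\gamma, \rho_t) \le \d_{\text{BL}}(\rho_t^\gamma, \bar\rho_t^\gamma) + \d_{\text{BL}}(\bar\rho_t^\gamma, \rho_t).
\]
For the first term I use the embedding $\d_{\text{BL}} \le \d_2$ (on probability measures) and Lemma~\ref{lem_rho2} to conclude $\d_2^2(\rho_t^\gamma, \bar\rho_t^\gamma) \le \gamma^{-2}\iint |v|^2 \mu_t^\gamma(dxdv) = O(1/\gamma^2)$, using the uniform second-moment bound. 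For the second term, I invoke the Csisz\'ar--Kullback--Pinsker inequality, $\d_{\text{BL}}^2(\bar\rho^\gamma, \rho) \le \|\bar\rho^\gamma - \rho\|_{\text{TV}}^2 \le 2\,\Ent(\bar\rho^\gamma\,|\,\rho)$, reducing the task to controlling the relative entropy.

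Since both $\bar\rho^\gamma$ (via \eqref{eq:aux-rho0}) and $\rho$ (via \eqref{eq:diffusion}) satisfy Fokker--Planck-type equations with unit diffusion, setting $\bar b_t^\gamma(x) := \bar\rho_t^\gamma(x)^{-1}\int \sfF(x-v/\gamma, \rho_t^\gamma)\,(\Gamma^\gamma_\# \mu_t^\gamma)(x,dv)$ and $b_t(x) := \sfF(x, \rho_t)$, the classical relative-entropy identity combined with Young's inequality yields
\[
\frac{d}{dt} \Ent(\bar\rho_t^\gamma \,|\, \rho_t) \le -\frac{1}{2}\int \bar\rho_t^\gamma\,\bigl|\nabla\log(\bar\rho_t^\gamma/\rho_t)\bigr|^2\,dx + \frac{1}{2} \int \bar\rho_t^\gamma\, |\bar b_t^\gamma - b_t|^2\, dx.
\]
Dropping the dissipation and applying Jensen's inequality to the disintegration of $\Gamma^\gamma_\# \mu^\gamma$ in $x$ gives
\[
\int \bar\rho_t^\gamma |\bar b_t^\gamma - b_t|^2\, dx \le \iint \bigl|\sfF(x,\rho_t^\gamma) - \sfF(x+v/\gamma, \rho_t)\bigr|^2\, \mu_t^\gamma(dxdv).
\]

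The core work is to split this integrand as $[\sfF(x,\rho_t^\gamma) - \sfF(x, \rho_t)] + [\sfF(x, \rho_t) - \sfF(x+v/\gamma, \rho_t)]$. For the second bracket, shifting the translation onto $\rho$ via $(\nabla K \star \rho)(x + v/\gamma) - (\nabla K \star \rho)(x) = \nabla K \star (\tau_{-v/\gamma}\rho - \rho)(x)$ and using $\rho_t \in W^{1,1}$ from the $\scrE$-regularity together with $(\textbf{A}_\Phi^1)$ yields the pointwise bound $(c_\Phi + \|\nabla K\|_\infty \|\nabla\rho_t\|_{L^1})|v|/\gamma$, contributing $O(1/\gamma^2)$ after integration against $\mu_t^\gamma$. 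The first bracket equals $-\nabla K \star (\rho_t^\gamma - \rho_t)$, which I handle by further splitting $\rho^\gamma - \rho = (\rho^\gamma - \bar\rho^\gamma) + (\bar\rho^\gamma - \rho)$: the second piece is bounded by $\|\nabla K\|_\infty\sqrt{2\,\Ent(\bar\rho^\gamma|\rho)}$ via CKP, producing exactly the feedback term required for Gr\"onwall. The first piece needs the crucial $L^1$-estimate $\|\rho_t^\gamma - \bar\rho_t^\gamma\|_{L^1} = O(1/\gamma)$, obtained from the pointwise identity $\rho^\gamma(y) - \bar\rho^\gamma(y) = \int [\mu^\gamma(y,v) - \mu^\gamma(y - v/\gamma, v)]\, dv$, the fundamental theorem of calculus, and a Cauchy--Schwarz with the Gaussian weight $e^{-H/2}$:
\[
\|\rho_t^\gamma - \bar\rho_t^\gamma\|_{L^1} \le \frac{C}{\gamma}\, \|\nabla_x \mu_t^\gamma\|_{L^2_H} \left(\iint |v|^2 e^{-H(x,v)}\, d\leb^{2d}\right)^{1/2},
\]
where the right-hand integral is finite \emph{precisely because} $e^{-\Phi} \in L^1(\R^d)$. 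Collecting the estimates yields the differential inequality $\frac{d}{dt}\Ent(\bar\rho_t^\gamma|\rho_t) \le c_1(t)\, \Ent(\bar\rho_t^\gamma|\rho_t) + c_2(t)/\gamma^2$ with $c_1, c_2 \in L^1(0,T)$ (the time-integrability of $\|\nabla\rho_t\|_{L^1}^2$ following from the Fisher-information bound in the $\scrE$-regularity via Cauchy--Schwarz), and Gr\"onwall closes the argument.

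The main obstacle is closing the drift-error bound for the term $\nabla K \star (\rho^\gamma - \rho)$ when $\nabla K$ is merely bounded: regularity of $\rho$ alone is insufficient, and one genuinely needs both the uniform $L^2_H$-bound on $\nabla_x \mu^\gamma$ and the integrability $e^{-\Phi} \in L^1(\R^d)$ to convert the $O(1/\gamma)$ smallness from the coarse-graining map into an honest $L^1$-control of $\rho^\gamma - \bar\rho^\gamma$; without this the Gr\"onwall feedback would not close.
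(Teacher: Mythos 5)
Your proposal is correct and follows the same overall architecture as the paper: triangle inequality through the coarse-grained density $\bar\rho^\gamma$, Lemma~\ref{lem_rho2} plus the uniform second-moment bound for $\d_{\text{BL}}(\rho^\gamma_t,\bar\rho^\gamma_t)$, and a Gr\"onwall argument for $\Ent(\bar\rho^\gamma_t|\rho_t)$ driven by the relative-entropy inequality (the paper cites \cite[Theorem 2.18]{DLPS18} for the step you derive directly) with the Csisz\`ar--Kullback--Pinsker inequality producing the feedback term. The difference lies in how the drift error $\sfF(x,\rho^\gamma_t)-\sfF(x+v/\gamma,\rho_t)$ is decomposed. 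The paper inserts $\sfF(x,\bar\rho^\gamma_t)$ as the pivot and proves a single pointwise lemma (Lemma~\ref{lem_wreg0}) comparing $(\nabla K\star\rho^\gamma)(x-v/\gamma)$ with $(\nabla K\star\bar\rho^\gamma)(x)$ directly, by writing the difference as a line integral of $\nabla^2K$, integrating by parts onto $\nabla_x\mu^\gamma$, and using Cauchy--Schwarz with the weight $e^{-H}$; this handles the spatial shift and the change of density in one stroke and never touches the regularity of the limit $\rho_t$. You instead pivot through $\sfF(x,\rho_t)$: the shift is absorbed into a translation of $\rho_t$, costing $\|\nabla\rho_t\|_{L^1}|v|/\gamma$ and hence requiring $\rho_t\in W^{1,1}$ with $\|\nabla\rho_t\|_{L^1}^2\in L^1(0,T)$ (correctly extracted from the Fisher-information bound in Definition~\ref{def:regular-solution}), while the density change is reduced to $\|\rho^\gamma_t-\bar\rho^\gamma_t\|_{L^1}=O(1/\gamma)$, which you prove by exactly the same fundamental-theorem-of-calculus and weighted Cauchy--Schwarz computation that underlies the paper's Lemma~\ref{lem_wreg0}. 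Both routes consume the same two hypotheses ($\sup_t\|\nabla_x\mu^\gamma_t\|_{L^2_H}<\infty$ and $e^{-\Phi}\in L^1$) in the same essential way; the paper's version is marginally leaner in that it needs no quantitative regularity of $\rho_t$ at this step and keeps all the $1/\gamma$-smallness on the kinetic side, whereas your version has the minor advantage of isolating a reusable $L^1$-estimate on $\rho^\gamma-\bar\rho^\gamma$.
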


\subsubsection*{Outline of the paper}The rest of this paper is organized as follows. In Section \ref{sec:IS}, we introduce the Fokker--Planck-type intermediate system via the coarse-graining map, and provide some quantitative bounds and regularity estimates of solutions to that system related to the kinetic equation \eqref{eq:kinetic-rescaled}. In particular, we present the uniform-in-$\gamma$ estimate of second moment of $\mu^\gamma_t$ in velocity. Section \ref{sec:2WEV} is devoted to the derivation of the stability estimate \eqref{EVI0}. We then consider the regular case, i.e.\ when $\nabla K$ is bounded and Lipschitz, in Section \ref{sec:bL}, and show the quantitative error estimate between $\rho_t^\gamma$ and $\rho_t$ in the $2$-Wasserstein distance, resulting in Theorem~\ref{thm_reg}. For the singular interaction potentials, we will need a higher-order regularity of solutions. Section \ref{sec:uniform-gamma} provides the well-posedness result for the kinetic equation \eqref{eq:kinetic-rescaled} and uniform-in-$\gamma$ estimates of solutions in the weighted Sobolev space $W_{x,H}^{1,p}$ (cf.\ Theorem~\ref{thm:well-posedness}). In Section \ref{sec:SIP}, we combine the stability estimate \eqref{EVI0} and the uniform-in-$\gamma$ estimates of solutions obtained in Section~\ref{sec:uniform-gamma} to quantify the overdamped limit for the kinetic equation \eqref{eq:kinetic-rescaled} with singular interaction potentials (cf.\ Theorem~\ref{thm:stability-limit}). In Section \ref{sec:bdd}, we provide a different approach to deal with the case when $\nabla K$ is only assumed to be bounded, resulting in the proof of Theorem~\ref{thm_wreg}. 
Finally, we discuss the solvability for our kinetic equation \eqref{eq:kinetic-rescaled}, which makes all the {\it a priori} estimates for the overdamped limit presented in Section~\ref{sec:uniform-gamma} completely rigorous, in Appendix \ref{app:WP}.

%
%
%
%
\subsection*{Acknowledgments}
The work of Y.-P.C.\ is supported by NRF grant (No.\ 2017R1C1B2012918) and Yonsei University Research Fund of 2019-22-0212 and 2020-22-0505. O.T.\ acknowledges support from NWO Vidi grant 016.Vidi.189.102, ``Dynamical-Variational Transport Costs and Application to Variational Evolutions" and thanks Mitia Duerinckx, Mark Peletier and Upanshu Sharma for insightful discussions in the early stages of this manuscript.

\section{An intermediate system}\label{sec:IS}

\subsection{The coarse-graining map}\label{sec:CG-map}

We introduce a transformation of the system via the following coarse-graining map. For each $\gamma>0$, we set
\[
	\Gamma^\gamma:\R^d\times\R^d\to\R^d\times \R^d,\qquad \Gamma^\gamma(x,v)=(x+v/\gamma,v)\,,
\]
which is a diffeomorphism with $\det(D\Gamma^\gamma(x,v))=1$ for all $(x,v)\in\R^d\times\R^d$, i.e.\ $\Gamma^\gamma$ is volume preserving, with its inverse given by $\Gamma^{-\gamma}(\zeta,v):=(\zeta-v/\gamma,v).$

\begin{remark}
    If $\mu^\gamma\ll\leb$, then the change of variable formula gives
    \[
		\frac{d(\Gamma^\gamma_\#\mu^\gamma)}{d\leb}(x,v) = \mu^\gamma\circ \Gamma^{-\gamma}(x,v) = \mu^\gamma(x-v/\gamma,v)\,.
    \]
\end{remark}

In the following lemma, we show the relative entropy estimate of $\mu^\gamma$ with respect to $\Gamma^{-\gamma}_\#\scrN^{2d}$, whose proof is postponed to Appendix~\ref{app:uniform-gamma} (cf.\ Theorem~\ref{thm:entropy-linear}). The lemma provides, in particular, a uniform in $\gamma$ bound on the second moment of $\mu^\gamma$ in velocity, see Remark \ref{rmk:2m} below.

\begin{lemma}\label{lem:g-estimate}
Let $\mu^\gamma$ be a solution of \eqref{eq:kinetic-rescaled} with sufficient regularity and integrability. Then for $\gamma\ge 1$, there exist constants $\alpha,\beta,\lambda>0$, independent of $\gamma\ge 1$, such that
    \[
        \Ent(\mu_t^\gamma|\Gamma^{-\gamma}_\#\scrN^{2d}) + \alpha\gamma^2\int_0^t\intrr |\nabla_v \log \mu_r^\gamma + v|^2\,d\mu_r^\gamma\,dr \le M(\mu_0^\gamma,t)\,,
    \]
    with $M(\mu_0^\gamma,t) = \Ent(\mu_0^\gamma|\Gamma^{-\gamma}_\#\scrN^{2d})e^{\lambda t} + (\beta/\lambda)(e^{\lambda t}-1)$.
\end{lemma}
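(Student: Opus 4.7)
I would prove the lemma by establishing the differential inequality
\[
\frac{d}{dt}\Ent(\mu_t^\gamma|\eta_\gamma) + \alpha\gamma^2\intrr |\nabla_v\log\mu_t^\gamma + v|^2\,d\mu_t^\gamma \;\le\; \lambda\,\Ent(\mu_t^\gamma|\eta_\gamma) + \beta\,,
\]
where $\eta_\gamma := \Gamma^{-\gamma}_\#\scrN^{2d}$, and then invoking Grönwall in its integrated form (which yields exactly the stated $M(\mu_0^\gamma,t)$).

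The first step is an exact entropy-production identity obtained by direct integration by parts. Noting that $\nabla_x\log\eta_\gamma(x,v) = -(x+v/\gamma)$ and $\nabla_v\log\eta_\gamma(x,v) = -(1+\gamma^{-2})v - \gamma^{-1}x$, and rewriting the right-hand side of \eqref{eq:kinetic-rescaled} as $\gamma^2\nabla_v\cdot[\mu_t^\gamma(\nabla_v\log\mu_t^\gamma+v)]$, I compute $\frac{d}{dt}\Ent(\mu_t^\gamma|\eta_\gamma) = \int \log(\mu_t^\gamma/\eta_\gamma)\,\partial_t\mu_t^\gamma\,dxdv$ piece by piece. The transport contribution generates $\gamma\int v\cdot x\,d\mu^\gamma + \int|v|^2\,d\mu^\gamma$; the force term produces $(\gamma+\gamma^{-1})\int\sfF\cdot v\,d\mu^\gamma + \int\sfF\cdot x\,d\mu^\gamma$; and the diffusion yields $-\gamma^2\int|\nabla_v\log\mu^\gamma+v|^2\,d\mu^\gamma + d - \int|v|^2\,d\mu^\gamma - \gamma\int v\cdot x\,d\mu^\gamma$. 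The $v\cdot x$ and $|v|^2$ cross-terms cancel exactly, leaving the clean identity
\[
\frac{d}{dt}\Ent(\mu^\gamma|\eta_\gamma) + \gamma^2\intrr |\nabla_v\log\mu^\gamma+v|^2\,d\mu^\gamma \;=\; d + (\gamma+\gamma^{-1})\intrr \sfF\cdot v\,d\mu^\gamma + \intrr \sfF\cdot x\,d\mu^\gamma.
\]

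The delicate point is absorbing the $\gamma$-enhanced coupling $(\gamma+\gamma^{-1})\int\sfF\cdot v\,d\mu^\gamma$, since a naive Cauchy--Schwarz bound is only of order $\gamma$ times $\int|v|^2 d\mu^\gamma$---insufficient against the $\gamma^2$-Fisher information. The key gauge identity is that $\sfF$ is $v$-independent, hence $\int\sfF\cdot\nabla_v\mu^\gamma\,dxdv = 0$ by integration by parts in $v$, so
\[
(\gamma+\gamma^{-1})\intrr\sfF\cdot v\,d\mu^\gamma = (\gamma+\gamma^{-1})\intrr\sfF\cdot\bigl(v+\nabla_v\log\mu^\gamma\bigr)\,d\mu^\gamma.
\]
Young's inequality with weights tuned to $\gamma^2$ bounds this by $(1-\alpha)\gamma^2$ times the $v$-Fisher information plus $C(\alpha)\int|\sfF|^2\,d\mu^\gamma$, uniformly in $\gamma\ge 1$ (since $(\gamma+\gamma^{-1})^2/\gamma^2\le 4$). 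The remaining $\int\sfF\cdot x\,d\mu^\gamma$ is controlled by $\tfrac{1}{2}\int(|\sfF|^2+|x|^2)\,d\mu^\gamma$. Under $(\textbf{A}_\Phi^1)$ combined with uniform control of $\nabla K\star\rho^\gamma$ (provided by the ``sufficient regularity'' hypothesis), one has $|\sfF|^2\le C(1+|x|^2)$, so both error contributions are bounded by $C(1+\int|x|^2\,d\mu^\gamma)$.

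It then remains to trade moments for entropy via the Donsker--Varadhan inequality $\int f\,d\mu \le \Ent(\mu|\eta_\gamma) + \log\int e^f\,d\eta_\gamma$. Taking $f=\alpha|v|^2$ with $\alpha\in(0,1/2)$ and integrating over $x$ first (Gaussian change of variables $\xi = x+v/\gamma$), one obtains $\int e^{\alpha|v|^2}\,d\eta_\gamma = (1-2\alpha)^{-d/2}$ independent of $\gamma$, whence $\int|v|^2\,d\mu^\gamma \le C\,\Ent(\mu^\gamma|\eta_\gamma) + C$. Applying the same argument to $\alpha|x+v/\gamma|^2$ and combining with the elementary inequality $|x|^2\le 2|x+v/\gamma|^2+2|v|^2$ (valid for $\gamma\ge 1$) yields $\int|x|^2\,d\mu^\gamma \le C\,\Ent(\mu^\gamma|\eta_\gamma) + C$ with constants independent of $\gamma$. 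Substituting these moment bounds back into the entropy identity produces the desired differential inequality, and Grönwall delivers the stated estimate. The main obstacle is really the gauge identity in the third paragraph: without the cancellation $\int\sfF\cdot\nabla_v\mu^\gamma = 0$, the $\gamma$-enhanced velocity coupling cannot be dominated by the $\gamma^2$-Fisher dissipation uniformly in the overdamped regime.
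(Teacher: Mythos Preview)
Your proof is correct and follows essentially the same route as the paper's rigorous argument (Theorem~\ref{thm:entropy-linear}): both compute the time derivative of $\Ent(\mu_t^\gamma|\Gamma^{-\gamma}_\#\scrN^{2d})$, absorb the dangerous $\gamma$-enhanced force--velocity coupling into the $\gamma^2$-Fisher dissipation via Young's inequality after pairing $\sfF$ with the Fisher vector $\nabla_v\log\mu^\gamma+v$, and close the loop with the Donsker--Varadhan moment-to-entropy bound. Your ``gauge identity'' $\int \sfF\cdot\nabla_v\mu^\gamma=0$ is precisely what the paper obtains implicitly when decomposing $\nabla_v\log\eta_R$ in term $\mathrm{(III)}$; the only difference is that the paper carries out the computation through the regularizations $\psi_\delta$, $\eta_R$ to make the integrations by parts rigorous, whereas you work formally, which is appropriate under the stated ``sufficient regularity and integrability'' hypothesis.
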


%

\begin{remark}\label{rmk:2m}
	Note that if $\sup_{\gamma\ge 1}\Ent(\mu_0^\gamma|\scrN^{2d}) <\infty$, then the estimate provided in Lemma~\ref{lem:g-estimate} yields the following uniform-in-$\gamma$ bound:
	\[
    	\sup_{\gamma\ge 1}\sup_{t\in[0,T]}\Ent(\mu_t^\gamma|\scrN^{2d}) <\infty\,.
    \]
    
    To justify this claim, we make use of the Donsker--Varadhan dual variational characterization for the relative entropy \cite[Proposition~1.4.2]{DE97}, which states that for any bounded measurable function $\varphi:\R^{2d}\to\R$, and any $\vartheta\in\calP(\R^{2d})$,
	\[
		\log\lt( \intrr e^{\varphi} d\vartheta\rt) = \sup \lt\{ \intrr \varphi\,d\nu - \Ent(\nu|\vartheta)\,:\, \nu\in \calP(\R^{2d}),\; \Ent(\nu|\vartheta)<\infty\rt\}.
	\]
	This characterization can be generalized to also hold for any nonnegative measurable functions $\varphi:\R^{2d}\to[0,\infty]$ (see \cite[Lemma B.1]{JHMTpre}). Applying the above characterization to $\vartheta = \Gamma^{-\gamma}_\#\scrN^{2d}$ and $\varphi(x,v) = (|x+v/\gamma|^2 + |v|^2)/4$, we deduce a moment estimate for any $\nu\in \calP(\R^{2d})$ with $\Ent(\nu|\Gamma^{-\gamma}_\#\scrN^{2d})<\infty$:
	\begin{align}\label{eq:entropy-moment}
    	d\log 2 \ge \frac14 \intrr |x+v/\gamma|^2 + |v|^2 \,d \nu - \Ent(\nu|\Gamma^{-\gamma}_\#\scrN^{2d})\,,
    \end{align}
	where we use the fact that
    \[
        \log\lt( \intrr e^{\frac{|x+v/\gamma|^2+|v|^2}{4}} d(\Gamma^{-\gamma}_\#\scrN^{2d})\rt) = \log\left(\intrr e^{\frac{|x|^2+|v|^2}{4}} d\scrN^{2d}\right) = d\log 2\,.
    \]

	Now, since
	\begin{align*}
		\Ent(\mu_t^\gamma|\scrN^{2d}) &= \Ent(\mu_t^\gamma|\leb^{2d}) + \frac{1}{2}\intrr |x|^2 + |v|^2\,d\mu_t^\gamma - d\log(2\pi) \\
		&\le \Ent(\mu_t^\gamma|\Gamma^{-\gamma}_\#\scrN^{2d}) + c_1\intrr |x+v/\gamma|^2 + |v|^2\,d\mu_t^\gamma
	\end{align*}
	for some constant $c_1>0$ independent of $\gamma\ge 1$, we obtain from \eqref{eq:entropy-moment} the estimate
	\[
		\Ent(\mu_t^\gamma|\scrN^{2d}) \le c_2\,\Ent(\mu_t^\gamma|\Gamma^{-\gamma}_\#\scrN^{2d}) + c_3
	\]
	for some constants $c_2,c_3>0$, independent of $\gamma\ge 1$. In a similar fashion, one deduces that
	\[
		\,\Ent(\mu_t^\gamma|\Gamma^{-\gamma}_\#\scrN^{2d}) \le c_4\,\Ent(\mu_t^\gamma|\scrN^{2d}) + c_5
	\]
	for appropriate constants $c_4,c_5>0$ that are independent of $\gamma\ge 1$.	
\end{remark}

\subsection{The intermediate system}

Since $\Gamma^\gamma$ is a smooth map for each $\gamma>0$, it induces a map from $\calC_c^\infty(\R^d)\to\calC_c^\infty(\R^d\times\R^d)$. Indeed, for any $\varphi\in\calC_c^\infty(\R^d)$, we set $\varphi^\gamma(x,v):=(\varphi\circ \Gamma_1^\gamma)(x,v) = \varphi(x+v/\gamma)$, where $\Gamma_1^\gamma:=\pi^x\circ\Gamma^{\gamma}$. It is not difficult to see that $\varphi^\gamma\in\calC_c^\infty(\R^d\times\R^d)$. Using the identities
\[
	D_x\varphi^\gamma = (D_x\varphi)\circ\Gamma_1^\gamma\,,\qquad D_v\varphi^\gamma = (1/\gamma)(D_x\varphi)\circ\Gamma_1^\gamma\,,\qquad \Delta_v\varphi^\gamma = (1/\gamma^2)(\Delta_x\varphi)\circ\Gamma_1^\gamma\,,
\]
%
we deduce the following equalities:
\begin{align*}
	&\intrr\Bigl(\partial_t\varphi^\gamma + \gamma\, \langle v,\nabla_{x}\varphi^\gamma\rangle + \gamma\,\langle \sfF(x,\rho_t^\gamma)-\gamma v,\nabla_{v}\varphi^\gamma\rangle + \gamma^2\Delta_v\varphi^\gamma\Bigr)\,\mu_t^\gamma(dxdv) \\
	&\hspace{2em}= \intrr\Bigl((\partial_t\varphi)\circ\Gamma_1^\gamma + \langle \sfF(x,\rho_t^\gamma),(\nabla_x\varphi)\circ\Gamma_1^\gamma\rangle + (\Delta_x\varphi)\circ\Gamma_1^\gamma\Bigr)\,\mu_t^\gamma(dxdv) \\
	&\hspace{2em}= \intr\bigl(\partial_t\varphi + \Delta_x\varphi\bigr)\,((\pi^x\circ\Gamma^\gamma)_\#\mu_t^\gamma)(dx) + \intrr \langle\sfF(x - v/\gamma,\rho_t^\gamma), \nabla_x\varphi\rangle\,(\Gamma^\gamma_\#\mu_t^\gamma)(dxdv) \,.
\end{align*}
Thus, denoting $\bar\rho^\gamma := \Gamma^{\gamma}_1 {}_\# \mu^\gamma$ we find from \eqref{eq:kinetic-rescaled} that $\bar\rho^\gamma$ satisfies
\begin{align*}
	\int_0^t\intr\Bigl(\partial_s\varphi + \Delta_x\varphi\Bigr)\,d\bar\rho_s^\gamma\,ds + \int_0^t\intrr \langle\sfF(x - v/\gamma,\rho_s^\gamma), \nabla_x\varphi\rangle\,(\Gamma^\gamma_\#\mu_s^\gamma)(dxdv)\,ds = \intr \varphi(0,\cdot)\,d\bar\rho_0^\gamma
\end{align*}
for any $\varphi\in\calC_c^\infty(\R^d)$. In particular, $(\bar\rho_t^\gamma)$ satisfies the equation
\begin{align}\label{eq:aux-rho}
	\left\{\quad
	\begin{aligned}
		\partial_t\bar\rho_t^\gamma + \nabla_x\cdot \bar\jmath_t^\gamma &= \Delta_x\bar\rho_t^\gamma\,,\\
		\bar\jmath_t^\gamma(dx) &= \intr \sfF(x-v/\gamma,\rho_t^\gamma)\,(\Gamma^\gamma_\#\mu_t^\gamma)(dxdv)\,,
	\end{aligned}\right.
\end{align}
with initial condition $\bar\rho_0^\gamma=(\pi^x\circ \Gamma^\gamma)_\#\mu_0^\gamma$.

\begin{remark}
Due to the non-increasing nature of relative entropy under push-forward and the joint convexity of the functional $(\mu,\nu)\mapsto\Ent(\mu|\nu)$, we can apply Jensen's inequality to obtain the estimate
\[
	\Ent(\bar\rho^\gamma|\scrN^d) \le \Ent(\Gamma^\gamma_\#\mu^\gamma|\scrN^{2d}) = \Ent(\Gamma^\gamma_\#\mu^\gamma|(\Gamma^\gamma\circ\Gamma^{-\gamma})_\#\scrN^{2d}) \le \Ent(\mu^\gamma|\Gamma^{-\gamma}_\#\scrN^{2d})\,,
\]
which, due to Lemma \ref{lem:g-estimate}, yields
\[
	\Ent(\bar\rho_t^\gamma|\scrN^d) \le M(\mu_0^\gamma,t)\qquad\text{for all $t\in[0,T]$ and $\gamma\ge 1$}
\]
with $M(\mu_0^\gamma,t)$ as given in Lemma \ref{lem:g-estimate}.
\end{remark}

\begin{remark}
In a similar fashion as in the proof of Lemma~\ref{lem:g-estimate} (cf.\ Theorem~\ref{thm:entropy-linear}), one can establish the relative entropy estimate
	\[
		\Ent(\bar\rho_t^\gamma|\scrN^d) + \frac{1}{2}\int_0^t \intr |\nabla \log \bar\rho_r^\gamma|^2\,d\bar\rho_r^\gamma\,dr \le \Ent(\bar\rho_0^\gamma|\scrN^{2d})e^{\bar\lambda t} + (\bar\beta/\bar\lambda)(e^{\bar\lambda t}-1)
	\]
	for appropriate constants $\bar\beta,\bar\lambda>0$ independent of $\gamma\ge 1$.
\end{remark}

In the following two lemmas, we provide the quantitative bound on the error between $\bar\rho^\gamma$ and $\rho^\gamma$ in $2$-Wasserstein distance and regularity estimates on $\bar\rho^\gamma$ and $\rho^\gamma$.

\begin{lemma}[Relationship between $\bar\rho^\gamma$ and $\rho^\gamma$]\label{lem_rho2}

	Let $(\bar\rho_t^\gamma)$ and $(\rho_t^\gamma)$ be given as above. 
	Then the following estimate holds
	\begin{align*}\label{eq:rho-comparison}
		\d_2^2(\bar\rho_t^\gamma,\rho_t^\gamma) \le \frac{1}{\gamma^2}\intrr |v|^2 \,\mu_t^\gamma(dxdv)\,.
	\end{align*}
\end{lemma}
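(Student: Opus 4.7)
The plan is to exploit the definition of $\bar\rho_t^\gamma$ as a push-forward of $\mu_t^\gamma$ under $\pi^x\circ\Gamma^\gamma$ in order to construct an explicit admissible coupling between $\bar\rho_t^\gamma$ and $\rho_t^\gamma$, and then estimate the quadratic cost of this coupling directly. Since $\d_2^2$ is defined as an infimum over all couplings, producing one explicit coupling immediately yields an upper bound.

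Concretely, I would define the map
\[
T^\gamma : \R^d \times \R^d \to \R^d \times \R^d, \qquad T^\gamma(x,v) = (x + v/\gamma,\, x),
\]
and let $\pi^\gamma := T^\gamma_\# \mu_t^\gamma \in \calP(\R^d \times \R^d)$. The first marginal of $\pi^\gamma$ is $(\pi^x \circ \Gamma^\gamma)_\# \mu_t^\gamma = \bar\rho_t^\gamma$, while the second marginal is $\pi^x_\# \mu_t^\gamma = \rho_t^\gamma$. Thus $\pi^\gamma \in \Pi(\bar\rho_t^\gamma, \rho_t^\gamma)$ is an admissible transport plan.

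Applying the definition of the $2$-Wasserstein distance together with the change of variables formula for push-forwards gives
\[
\d_2^2(\bar\rho_t^\gamma, \rho_t^\gamma) \;\le\; \iint_{\R^d \times \R^d} |x_1 - x_2|^2\, \pi^\gamma(dx_1 dx_2) \;=\; \intrr \bigl|(x + v/\gamma) - x\bigr|^2\, \mu_t^\gamma(dxdv) \;=\; \frac{1}{\gamma^2} \intrr |v|^2\, \mu_t^\gamma(dxdv),
\]
which is the claimed estimate.

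There is no real obstacle here: the statement is essentially a reformulation of the fact that $\Gamma^\gamma$ differs from the identity by the displacement $(v/\gamma, 0)$, so the cost of transporting $\rho_t^\gamma$ to $\bar\rho_t^\gamma$ along the coupling induced by $\mu_t^\gamma$ is exactly $\gamma^{-2}$ times the kinetic energy. The only thing worth being explicit about is verifying that $\pi^\gamma$ has the correct marginals, which is a one-line check using the identities $\pi^x \circ T^\gamma = \pi^x \circ \Gamma^\gamma$ (projecting onto the first coordinate of $T^\gamma$) and the projection onto the second coordinate of $T^\gamma$ equals $\pi^x$.
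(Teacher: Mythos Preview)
Your proof is correct and is essentially the same approach as the paper's, just more streamlined: the paper first bounds $\d_2^2(\Gamma^\gamma_\#\mu_t^\gamma,\mu_t^\gamma)$ via the coupling $(\Gamma^\gamma\times id)_\#\mu_t^\gamma$ and then projects through $\pi^x\times\pi^x$ to compare the marginals, whereas you go directly to the marginal coupling $T^\gamma_\#\mu_t^\gamma$ (which is exactly $(\pi^x\times\pi^x)\circ(\Gamma^\gamma\times id)_\#\mu_t^\gamma$). Your version collapses the two steps into one and is arguably cleaner.
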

\begin{proof}
	We begin by relating $\Gamma^\gamma_\#\mu_t^\gamma$ with $\mu_t^\gamma$ for each $t\ge 0$. By choosing a coupling of the form $(\Gamma^\gamma \times id)_\#\mu_t^\gamma$ we find that
\begin{align*}
	\d_2^2(\Gamma^\gamma_\#\mu_t^\gamma,\mu_t^\gamma) &\le \intrr |\Gamma^\gamma(x,v) - (x,v)|^2\, \mu_t^\gamma(dxdv) = \frac{1}{\gamma^2}\intrr |v|^2\, \mu_t^\gamma(dxdv)\,.
\end{align*}
Now let $\Pi_t$ be an optimal coupling between $\Gamma^\gamma_\#\mu_t^\gamma$ and $\mu_t^\gamma$ for each $t\ge 0$. Then $(\pi^x\times\pi^x)_\#\Pi_t$ is a coupling between $\bar\rho_t^\gamma$ and $\rho_t^\gamma$. Thus we get
\begin{align*}
	\d_2^2(\bar\rho_t^\gamma,\rho_t^\gamma) &\le \intrr |x-\hat x|^2 ((\pi^x\times\pi^x)_\#\Pi_t)(dxd\hat x) \\
	&\le \iiiint_{\R^d\times\R^d\times\R^d\times\R^d}\Bigl(|x-\hat x|^2 + |v-\hat v|^2\Bigr)\, \Pi_t(dxdvd\hat xd\hat v) = \d_2^2(\Gamma^\gamma_\#\mu_t^\gamma,\mu_t^\gamma)\,,
\end{align*}
which, together with the previous estimate yields the assertion.
\end{proof}

\begin{lemma}[Implied regularity]\label{lem:lp-rho-u}
Let $k\in\{0,1\}$. If $\mu^\gamma\in W_{x,H}^{k,p}$, then $\rho^\gamma,\bar\rho^\gamma\in W^{k,p}(\R^d)$. In particular, if $p>d$, we have, due to Morrey's inequality, $\rho^\gamma,\bar\rho^\gamma\in L^\infty(\R^d)$. 
\end{lemma}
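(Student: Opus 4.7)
The plan is to reduce the assertion to two ingredients: Hölder's inequality in the velocity variable (together with the fact that the Gaussian-like weight $e^{-H}$ is integrable in $v$) and, for $\bar\rho^\gamma$, the volume-preserving change of variables induced by $\Gamma^\gamma$. Morrey's embedding then gives the $L^\infty$ conclusion for free once $p>d$.

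First, I would deal with $\rho^\gamma$. Write $\rho^\gamma(x)=\int_{\R^d}\mu^\gamma(x,v)\,dv$ and split $\mu^\gamma = (\mu^\gamma e^{(p-1)H/p})\cdot e^{-(p-1)H/p}$. By Hölder with conjugate exponents $p$ and $p/(p-1)$,
\[
|\rho^\gamma(x)|^p \le \lt(\int_{\R^d}|\mu^\gamma(x,v)|^p e^{(p-1)H(x,v)}\,dv\rt)\lt(\int_{\R^d} e^{-H(x,v)}\,dv\rt)^{p-1}.
\]
Since $H(x,v)=\Phi(x)+|v|^2/2$ and $\Phi\ge 0$, the second factor is bounded by $\lt((2\pi)^{d/2}\rt)^{p-1}$. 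Integrating in $x$ then yields $\|\rho^\gamma\|_{L^p}^p\le (2\pi)^{(p-1)d/2}\|\mu^\gamma\|_{L^p_H}^p$. For $k=1$, since differentiation in $x$ commutes with integration in $v$, the very same estimate applied to $\nabla_x\mu^\gamma$ in place of $\mu^\gamma$ gives $\|\nabla_x\rho^\gamma\|_{L^p}^p\le (2\pi)^{(p-1)d/2}\|\nabla_x\mu^\gamma\|_{L^p_H}^p$.

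Next, for $\bar\rho^\gamma$, recall $\bar\rho^\gamma(x)=\int_{\R^d}\mu^\gamma(x-v/\gamma,v)\,dv$. The same Hölder split gives
\[
|\bar\rho^\gamma(x)|^p \le \lt(\int_{\R^d}|\mu^\gamma(x-v/\gamma,v)|^p e^{(p-1)H(x-v/\gamma,v)}\,dv\rt)\lt(\int_{\R^d} e^{-H(x-v/\gamma,v)}\,dv\rt)^{p-1},
\]
and again the second factor is pointwise bounded by $(2\pi)^{(p-1)d/2}$ using $\Phi\ge 0$. Now I would integrate in $x$, apply Fubini, and perform, for each fixed $v$, the translation $y = x - v/\gamma$ (which has unit Jacobian) to recognize the right-hand side as $(2\pi)^{(p-1)d/2}\|\mu^\gamma\|_{L^p_H}^p$. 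For $k=1$, I would first note that the distributional $x$-derivative of $\bar\rho^\gamma$ is $\int_{\R^d}(\nabla_x\mu^\gamma)(x-v/\gamma,v)\,dv$ (this is precisely where $k\le 1$ is used: no $v$-derivatives of $\mu^\gamma$ appear after the shift) and then repeat the Hölder plus change-of-variables argument.

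Finally, the $L^\infty$ claim for $p>d$ follows immediately: both $\rho^\gamma$ and $\bar\rho^\gamma$ are in $W^{1,p}(\R^d)$ by the above, and Morrey's embedding $W^{1,p}(\R^d)\hookrightarrow L^\infty(\R^d)$ for $p>d$ concludes the proof. I do not expect any real obstacle here; the only mild subtlety is keeping track of the fact that the coarse-graining shift $v\mapsto x-v/\gamma$ does not introduce derivatives in $v$ when differentiating $\bar\rho^\gamma$ in $x$, which is why the hypothesis is on the $x$-Sobolev norm of $\mu^\gamma$ only and the bound is uniform in $\gamma\ge 1$.
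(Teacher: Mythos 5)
Your argument is correct and is essentially the paper's own proof: the same splitting of the weight $e^{(p-1)H/p}$, the same use of $\Phi\ge 0$ to bound the $v$-integral of $e^{-H}$ by the Gaussian constant, the same unit-Jacobian shift for $\bar\rho^\gamma$, and the same observation that only $x$-derivatives of $\mu^\gamma$ appear for $k=1$. The only cosmetic difference is that the paper runs the Hölder estimate in dual form against test functions $\varphi$ with $\|\varphi\|_{L^{p'}}\le 1$ (integrating by parts for the derivative), whereas you apply it pointwise in $x$ and integrate; the two are equivalent here.
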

\begin{proof}
We only prove the statement for $\bar\rho^\gamma$---the argument for $\rho^\gamma$ holds analogously.

For any $\varphi\in\calC_c^\infty(\R^d)$ with $\|\varphi\|_{L^{p'}} \leq 1$, where $p'$ is the H\"older conjugate of $p$, we find
\begin{align*}
\intr \varphi(x)\,\bar\rho^\gamma(dx)
		&= \intrr \varphi(x) \,\Gamma^\gamma_\#\mu^\gamma(dxdv)
		= \intrr \varphi(x+v/\gamma)\,\mu^\gamma(dxdv)\,.
\end{align*}
We then bound the right-hand side as
$$\begin{aligned}
\intrr \varphi(x+v/\gamma)\,\mu^\gamma(dxdv) &\le \lt|\intrr \varphi(x+v/\gamma)e^{-\frac1{p'}H(x,v)}\,\mu^\gamma(x,v)e^{\frac1{p'}H(x,v)}\,\leb^{2d}(dxdv)\rt| \\
&\le \left(\intrr |\varphi(x+v/\gamma)|^{p'} e^{-H(x,v)}\leb^{2d}(dxdv)\right)^{1/p'} \|\mu^\gamma\|_{L_{H}^p}\,.
\end{aligned}$$
Recall that $H(x,v) = \Phi(x) + |v|^2/2$ and $\Phi \geq 0$. Now since
\begin{align*}
\intrr |\varphi(x+v/\gamma)|^{p'} e^{-H(x,v)}\leb^{2d}(dxdv) &\le (2\pi)^{d/2}\intrr |\varphi(x+v/\gamma)|^{p'}\leb^d(dx)\scrN^{d}(dv)\\
 &= (2\pi)^{d/2}\|\varphi\|_{L^{p'}}^{p'},
\end{align*}
we deduce, by duality, that $\bar\rho^\gamma\in L^p(\R^d)$.
%


Now let $\varphi\in\calC_c^\infty(\R^d)$ with $\|\varphi\|_{L^{p'}}\le 1$ be arbitrary. Then elementary computations yield
\begin{align*}
\lt|\intr (\partial_{x_i}\varphi)(x)\,\bar\rho^\gamma(dx)\rt| &= \lt|\intrr (\partial_{x_i}\varphi)(x)\,(\Gamma^\gamma_\#\mu^\gamma)(dxdv)\rt| \cr
&= \lt|\intrr (\partial_{x_i}\varphi)(x+v/\gamma)\,\mu^\gamma(x,v)\,\leb^{2d}(dxdv)\rt| \\
&= \lt|\intrr \varphi(x+v/\gamma)\,(\partial_{x_i}\mu^\gamma)(x,v)\,\leb^{2d}(dxdv)\rt| \\
&\le \left(\intrr |\varphi(x+v/\gamma)|^{p'} e^{-H(x,v)}\leb^{2d}(dxdv)\right)^{1/p'} \|\partial_{x_i}\mu^\gamma\|_{L_{H}^p}\,,
\end{align*}
for each $i=1,\ldots,d$. 
As above, we deduce $\nabla\bar\rho^\gamma\in L^p(\R^d)$, and consequently, $\bar\rho^\gamma\in W^{1,p}(\R^d)$.
\end{proof}

%
%
%
%
%
\section{Modulated interaction energy and Wasserstein stability estimates}\label{sec:2WEV}

This section contains two  essential parts to our strategy: The first part concerns the so-called {\em modulated interaction energy} between two measures $\mu$, $\nu$ for a given interaction kernel $K$. This energy appears when considering the differences between the free energies $\scrE(\mu)$ and $\scrE(\nu)$, which needs to be controlled in terms of the 2-Wasserstein distance $\d_2(\mu,\nu)$. The second part of this section discusses the temporal derivative of $\d_2^2(\mu_t,\nu)$ along $\scrE$-regular solutions $t\mapsto\mu_t$ of \eqref{eq:diffusion} and an arbitrary measure $\nu$. The temporal derivate can be related to differences in free energies, and ultimately to $\d_2^2(\mu_t,\nu)$.

\subsection{Modulated interaction energy}\label{sec:modulated}

For measures $\mu,\nu\in\calP(\R^d)$ we consider the modulated interaction energy
\[
	\scrD_K(\mu,\nu):=\intrr K(x-y)(\mu-\nu)(dy)(\mu-\nu)(dx)\,,
\]
and further recall the $\dot H^{-1}$ homogeneous Sobolev norm given by
\[
	\|\omega\|_{\dot H^{-1}} :=\sup\Bigl\{ |\langle f,\omega\rangle| \,:\, \|\nabla f\|_{L^2} \le 1\Bigr\}
\]
for any signed measure $\omega$ on $\R^d$. We remark that this norm is only finite for signed measures having zero total mass, i.e.\ $\omega(\R^d)=0$.

\begin{remark}\label{rem:DK-homogeneous}
	When $\nabla K\star(\mu-\nu)\in L^2(\R^d)$, then $\scrD_K$ can be related to $\|\cdot\|_{\dot{H}^{-1}}$ by
\[
	|\scrD_K(\mu,\nu)| \le \|\nabla K\star(\mu-\nu)\|_{L^2(\R^d)}\|\mu-\nu\|_{\dot{H}^{-1}}\,.
\]	
\end{remark}

The next proposition relates the homogeneous $\dot H^{-1}(\R^d)$ distance and the 2-Wasserstein distance for measures that have essentially bounded Lebesgue densities \cite{Loe06,Pey18}.

\begin{proposition}\label{prop:W2-homogeneous}
	Let $\mu,\nu\in \calP_2(\R^d)$ with 
	\[
		c_\infty:=\max\bigl\{\|\mu\|_{L^\infty},\|\nu\|_{L^\infty}\bigr\} <\infty\,,
	\]
	then
	\[
		\|\mu-\nu\|_{\dot H^{-1}} \le \sqrt{c_\infty}\,\d_2(\mu,\nu)\,.
	\]
\end{proposition}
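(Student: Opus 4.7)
The plan is to prove the inequality by duality, using the displacement interpolation between $\mu$ and $\nu$. By Brenier's theorem, since $\mu,\nu\in\calP_2(\R^d)$ with $\mu,\nu\in L^\infty(\R^d)$, there exists a convex potential $\phi$ such that $T:=\nabla\phi$ is the unique optimal transport map from $\mu$ to $\nu$. The displacement interpolation $\rho_t:=[(1-t)\mathrm{id} + tT]_\#\mu$, $t\in[0,1]$, is the constant-speed $\d_2$-geodesic joining $\mu$ to $\nu$, and by the Benamou--Brenier formulation it satisfies a continuity equation $\partial_t\rho_t + \nabla\cdot(\rho_t v_t)=0$ with a velocity field obeying $\|v_t\|_{L^2(\rho_t)}^2=\d_2^2(\mu,\nu)$ for a.e. $t\in[0,1]$.

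Given any test function $f\in\mc_c^\infty(\R^d)$ with $\|\nabla f\|_{L^2}\le 1$, I would differentiate along the geodesic and apply Cauchy--Schwarz twice:
\[
|\langle f,\nu-\mu\rangle| = \lt|\int_0^1\intr \nabla f\cdot v_t\,d\rho_t\,dt\rt| \le \int_0^1 \|\nabla f\|_{L^2(\rho_t)}\|v_t\|_{L^2(\rho_t)}\,dt \le \d_2(\mu,\nu)\lt(\int_0^1 \|\rho_t\|_{L^\infty}\,dt\rt)^{1/2},
\]
where in the last step I used $\|\nabla f\|_{L^2(\rho_t)}^2\le \|\rho_t\|_{L^\infty}\|\nabla f\|_{L^2}^2\le \|\rho_t\|_{L^\infty}$. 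Taking the supremum over admissible $f$ would then yield $\|\mu-\nu\|_{\dot H^{-1}}^2\le \d_2^2(\mu,\nu)\int_0^1\|\rho_t\|_{L^\infty}\,dt$, so the proposition reduces to establishing the uniform bound $\|\rho_t\|_{L^\infty}\le c_\infty$ along the interpolation.

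The uniform $L^\infty$-bound is the main technical step, and I would obtain it via Minkowski's determinant inequality applied to the Jacobian $DT_t=(1-t)I+tD^2\phi$. Since $D^2\phi$ is positive semi-definite, one has
\[
\det\bigl((1-t)I + tD^2\phi(x)\bigr)^{1/d} \ge (1-t) + t\,\det D^2\phi(x)^{1/d}\,.
\]
Combining this with the Monge--Amp\`ere identities $\mu(x)=\rho_t(T_t(x))\det DT_t(x)$ and $\mu(x)=\nu(\nabla\phi(x))\det D^2\phi(x)$ gives the concavity estimate
\[
\rho_t(T_t(x))^{-1/d} \ge (1-t)\mu(x)^{-1/d} + t\,\nu(\nabla\phi(x))^{-1/d} \ge c_\infty^{-1/d}\,,
\]
from which $\|\rho_t\|_{L^\infty}\le c_\infty$ for every $t\in[0,1]$. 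Substituting this into the previous bound yields the claimed inequality. The remaining subtlety is justifying the identity $\frac{d}{dt}\langle f,\rho_t\rangle=\int\nabla f\cdot v_t\,d\rho_t$ when $\phi$ is merely a convex Brenier potential without further regularity; this is handled by the standard McCann/Ambrosio--Gigli--Savar\'e framework on absolutely continuous curves in $(\calP_2(\R^d),\d_2)$, or equivalently by a regularization argument replacing $\mu,\nu$ by smooth approximations and passing to the limit using the lower semicontinuity of $\|\cdot\|_{\dot H^{-1}}$ and the continuity of $\d_2$.
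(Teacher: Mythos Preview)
Your proof is correct. The paper does not actually prove this proposition but merely states it with references to \cite{Loe06,Pey18}; your argument---Benamou--Brenier duality combined with McCann's displacement convexity bound $\|\rho_t\|_{L^\infty}\le c_\infty$---is precisely the standard proof found in those references, and the paper itself invokes the same interpolant $L^\infty$ bound (citing \cite[Theorem~2.3]{Mc97}) in the proof of Theorem~\ref{thm:DK-interaction}(2).
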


Due to Remark \ref{rem:DK-homogeneous} and Proposition \ref{prop:W2-homogeneous}, we can relate $\scrD_K$ with $\d_2$. However, as we will observe in the next sections, when $\scrD_K$ take negative values, we will require $|\scrD_K|$ to be controlled by $\d_2^2$. This can be achieved for appropriate interaction kernels as given in the next statement.

\begin{theorem}\label{thm:DK-interaction}
\begin{enumerate}
\item Smooth interaction: If $\nabla K$ is globally Lipschitz, then
\[
	|\scrD_K(\mu,\nu)| \le \|\nabla K\|_{\Lip}\, \d_2^2(\mu,\nu)\,.		
\]
		
		\item Weakly singular interaction: If $\nabla^2 K\in L^1(\R^d)$, then
		\[
			\|\nabla K\star(\mu-\nu)\|_{L^2} \le \|\nabla^2 K\|_{L^1}\,\d_2(\mu,\nu)\,.
		\]
		In particular, if $c_\infty<\infty$, with $c_\infty$ given in Proposition \ref{prop:W2-homogeneous}, then
		\[
			|\scrD_K(\mu,\nu)| \le \sqrt{c_\infty}\,\|\nabla^2 K\|_{L^1}\,\d_2^2(\mu,\nu)\,.
		\]
	
	\item Newtonian attractive: When $K$ is the fundamental solution of the Laplacian, i.e.\ $\Delta K = \delta_0$, $\scrD_K$ takes the alternative form
	\[
		\scrD_K(\mu,\nu) = -\intr |\nabla K\star(\mu-\nu)|^2\,d\leb^d=-\|\mu-\nu\|_{\dot{H}^{-1}}^2\,,
	\]
	from which we obtain
	\[
		\scrD_K(\mu,\nu) \ge -c_\infty \d_2^2(\mu,\nu)\,.
	\]
	\end{enumerate}
\end{theorem}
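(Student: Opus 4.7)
The plan is to treat the three cases in sequence, each exploiting a different structural feature of $K$. The common starting point is to fix an optimal coupling $\pi\in\Pi(\mu,\nu)$ for $\d_2$, which allows writing $\scrD_K$ as the fourfold integral
\[
	\scrD_K(\mu,\nu) = \iiiint \bigl[K(x_1-x_2) - K(x_1-y_2) - K(y_1-x_2) + K(y_1-y_2)\bigr]\,\pi(dx_1dy_1)\,\pi(dx_2dy_2).
\]
For case (1), I would apply the fundamental theorem of calculus twice to the bracketed expression---first in the $(x_1-y_1)$-direction and then in the $(x_2-y_2)$-direction. The Lipschitz bound on $\nabla K$ controls the resulting second difference pointwise by $\|\nabla K\|_{\Lip}\,|x_1-y_1|\,|x_2-y_2|$, so that integrating against $\pi\otimes\pi$ produces the factor $\bigl(\int|x-y|\,d\pi\bigr)^2$, which by Jensen's inequality applied to the probability measure $\pi$ is bounded by $\d_2^2(\mu,\nu)$.

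For case (2), the strategy is to reduce the $L^2$-estimate on $\nabla K\star(\mu-\nu)$ to Young's convolution inequality. The key algebraic observation is that, whenever $\mu-\nu = \nabla\cdot j$ distributionally, one has the identity $\nabla K\star(\mu-\nu) = \nabla^2 K\star j$ (with matrix-vector convention). Young's inequality then gives
\[
	\|\nabla K\star(\mu-\nu)\|_{L^2} \le \|\nabla^2 K\|_{L^1}\,\|j\|_{L^2},
\]
and taking the infimum over admissible fluxes $j$ recovers $\|\mu-\nu\|_{\dot H^{-1}}$ on the right-hand side, which Proposition~\ref{prop:W2-homogeneous} controls by $\sqrt{c_\infty}\,\d_2(\mu,\nu)$. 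Combining this $L^2$-estimate with Remark~\ref{rem:DK-homogeneous} yields the stated bound on $|\scrD_K|$.

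For case (3), the plan leverages directly the distributional identity $\Delta K=\delta_0$. Setting $f:=K\star(\mu-\nu)$, one obtains $\Delta f = \mu-\nu$ in the sense of distributions, and an integration by parts---justified by the sufficient decay of $\nabla f$ afforded by the assumed $L^\infty$-bounds on $\mu,\nu$---yields
\[
	\scrD_K(\mu,\nu) = \langle f,\mu-\nu\rangle = \langle f,\Delta f\rangle = -\|\nabla f\|_{L^2}^2 = -\|\nabla K\star(\mu-\nu)\|_{L^2}^2.
\]
The last quantity coincides with $-\|\mu-\nu\|_{\dot H^{-1}}^2$, so that Proposition~\ref{prop:W2-homogeneous} concludes.

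The main technical obstacle lies in justifying the convolution manipulations of cases (2) and (3) under the low regularity hypotheses on $K$. Specifically, the identity $\nabla K\star(\nabla\cdot j)=\nabla^2 K\star j$ is formal when $\nabla^2 K$ is merely in $L^1$ and $j$ merely in $L^2$, and requires either a mollification of $K$ followed by passage to the limit, or a careful duality argument testing against $\calC_c^\infty$-functions; analogously, the integration by parts in (3) relies on verifying that $\nabla f\in L^2(\R^d)$, which can be extracted from the finiteness of $\|\mu-\nu\|_{\dot H^{-1}}$ via Proposition~\ref{prop:W2-homogeneous} together with an approximation by compactly supported test functions.
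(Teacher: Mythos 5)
Your proposal is correct, and parts (1) and (3) essentially reproduce the paper's own arguments: in (1) the paper also reduces to a double first-order difference along a $\d_2$-optimal coupling (it organizes the two difference quotients sequentially, passing through $\d_1$, rather than as a single fourfold integral, but the mechanism is identical), and in (3) the integration by parts and the identification $\|\nabla K\star(\mu-\nu)\|_{L^2}=\|\mu-\nu\|_{\dot H^{-1}}$ are exactly what the paper does. Part (2) is where you genuinely diverge. The paper writes $\nabla K\star(\mu-\nu)(x)=\int_0^1\intr(\nabla^2K)(x-\sfT^\theta(y))(y-\sfT(y))\,\mu(dy)\,d\theta$ using the optimal map $\sfT$, splits $|\nabla^2K|$ by Cauchy--Schwarz, and invokes McCann's $L^\infty$-bound on the displacement interpolants $\sfT^\theta_\#\mu$ to absorb one factor into $c_\infty\|\nabla^2K\|_{L^1}$. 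Your route via $\mu-\nu=\nabla\cdot j$, Young's inequality $\|\nabla^2K\star j\|_{L^2}\le\|\nabla^2K\|_{L^1}\|j\|_{L^2}$, and the flux characterization $\inf_j\|j\|_{L^2}=\|\mu-\nu\|_{\dot H^{-1}}$ avoids displacement interpolation altogether and is arguably more elementary; the price is that it necessarily passes through $\|\mu-\nu\|_{\dot H^{-1}}$ and hence through Proposition~\ref{prop:W2-homogeneous}, so the hypothesis $c_\infty<\infty$ is needed already for the intermediate $L^2$-estimate. The approximation issues you flag (mollifying $K$ to justify $\nabla K\star(\nabla\cdot j)=\nabla^2K\star j$, and the decay needed for the integration by parts in (3)) are real but routine, and the paper handles them no more carefully than you do.

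One small point worth recording: your argument for (2) delivers $|\scrD_K(\mu,\nu)|\le c_\infty\|\nabla^2K\|_{L^1}\,\d_2^2(\mu,\nu)$ and $\|\nabla K\star(\mu-\nu)\|_{L^2}\le\sqrt{c_\infty}\,\|\nabla^2K\|_{L^1}\,\d_2(\mu,\nu)$, i.e.\ with an extra factor of $\sqrt{c_\infty}$ compared with each display in the statement. This is not a defect of your approach: the paper's own proof produces exactly the same constants, so the discrepancy is between the theorem as stated and its proof, and is immaterial for the applications in Sections~\ref{sec:bL} and~\ref{sec:SIP}.
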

\begin{proof}
	\begin{enumerate}
		\item We begin by noticing that 
\begin{align}\label{smooth}
\begin{aligned}
	|\scrD_K(\mu,\nu)| &= \lt|\intr (K \star (\mu - \nu))(x) (\mu-\nu)(dx)\rt| \cr
& = \lt|\intrr \bigl[ (K \star (\mu - \nu))(x)  - (K \star (\mu - \nu))(\hat x) \bigr] \pi(dx d\hat x)\rt| \cr
&\leq \|\nabla K \star (\mu - \nu)\|_{L^\infty} \intrr |x - \hat x| \,\pi(dx d\hat x), 	
\end{aligned}
\end{align}
where $\pi$ is an arbitrary coupling between $\mu$ and $\nu$. Optimizing over all couplings of $\mu$ and $\nu$ in \eqref{smooth} yields
\bq\label{sm2}
	|\scrD_K(\mu,\nu)| \leq \|\nabla K \star (\mu - \nu)\|_{L^\infty} \d_{1}(\mu,\nu).
\eq
Similarly, we  find 
$$\begin{aligned}
\lt|\intr \nabla K(x-y)(\mu-\nu)(dy)\rt| &= \lt|\intrr \lt(\nabla K(x - y) - \nabla K(x - \hat y)\rt) \pi (dy d\hat y)\rt|\cr
&\leq \|\nabla K \|_{\Lip}\intrr |y - \hat y| \, \pi (dy d\hat y),
\end{aligned}$$
and thus, for all $x\in\R^d$,
\[
\lt|\intr \nabla K(x-y)(\mu-\nu)(dy)\rt| \leq  \|\nabla K \|_{\Lip} \d_1(\mu,\nu).
\]
This together with \eqref{sm2} implies
\[
\lt|\scrD_K(\mu,\nu)\rt| \leq   \|\nabla K \|_{\Lip}\d_1^2(\mu,\nu) \leq   \|\nabla K \|_{\Lip}\d_2^2(\mu,\nu)
\]
due to the monotonicity in $p$ of the $p$-Wasserstein distances.

\item An application of Remark~\ref{rem:DK-homogeneous} and Proposition~\ref{prop:W2-homogeneous} yields
\bq\label{dk1}
	|\scrD_K(\mu,\nu)| \leq \sqrt{c_\infty}\,\d_2(\mu,\nu)\|\nabla K\star(\mu - \nu)\|_{L^2}.
\eq
Thus it remains to show that 
\[
\|\nabla K\star(\mu - \nu)\|_{L^2} \leq C\d_2(\mu,\nu)
\]
for some constant $C>0$. Notice that
\begin{align*}
\intr K(x-y)(\mu-\nu)(dy) &= \intr \bigl(K(x-y)-K(x-\sfT(y))\bigr)\,\mu(dy) \\
&=\int_0^1 \intr (\nabla K)(x-\sfT^\theta(y))\cdot(y-\sfT(y))\,\mu(dy)\,d\theta\,,
\end{align*}
where $\sfT$ is the \textit{optimal} transport map such that $\sfT_\#\mu=\nu$
and $\sfT^\theta(y)=(1-\theta)\sfT(y)+\theta y$. 

Taking gradient of the above gives
\[
\nabla\intr K(x-y)(\mu-\nu)(dy) = \int_0^1 \intr (\nabla^2 K)(x-\sfT^\theta(y))(y-\sfT(y))\,\mu(dy)\,d\theta\,.
\]
By an application of H\"older's inequality, the inner integral of the right-hand side above can be estimated as
\begin{align*}
&\intr (\nabla^2 K)(x-\sfT^\theta(y))(y - \sfT(y))\,\mu(dy) \\
&\qquad \le \left(\intr |\nabla^2 K|(x-\sfT^\theta(y))|y - \sfT(y)|^2\,\mu(dy)\right)^{1/2} \left(\intr |\nabla^2 K|(x-\sfT^\theta(y))\,\mu(dy)\right)^{1/2}.
\end{align*}
From the properties of displacement interpolants $\sfT^\theta_\#\mu$ \cite[Theorem 2.3]{Mc97}, we deduce that $\sfT^\theta_\#\mu\ll \leb^d$ for all $\lambda\in(0,1)$ and that 
\[
\|\sfT^\theta_\#\mu\|_{L^\infty} \le\max\bigl\{\|\mu\|_{L^\infty},\|\nu\|_{L^\infty}\bigr\} =  c_\infty\,.
\] 
Consequently, the second term on the right-hand side of the former equation can be further estimated
\[
\intr |\nabla^2 K|(x-\sfT^\theta(y))\,\mu(dy) = \intr |\nabla^2 K|(x-y)\,(\sfT^\theta_\#\mu)(dy) 
\le c_\infty\, \|\nabla^2 K\|_{L^1}\,.
\]
Hence, by using Fubini's theorem, we obtain
\begin{align*}
&\|\nabla K\star(\mu - \nu)\|_{L^2}^2\cr
&\quad  \le c_\infty \|\nabla^2 K\|_{L^1} \int_0^1\intrr (\nabla^2 K)(x-\sfT^\theta(y))|y - \sfT(y)|^2\,\mu(dy)\,\leb^{d}(dx)\,d\theta \\
&\quad = c_\infty \|\nabla^2 K\|_{L^1}^2 \intr |y-\sfT(y)|^2\,\mu(dy) = c_\infty \|\nabla^2 K\|_{L^1}^2 \d_2^2(\mu,\nu)\,.
\end{align*}
We finally combine this with \eqref{dk1} to conclude the desired inequality.

	\item We first notice that $\scrD_K$ can be rewritten as 
\bq\label{d_k1}
\scrD_K(\mu,\nu) = -\intr |\nabla K\star(\mu-\nu)(x)|^2\,d\leb^d\,.
\eq
Indeed, we get
\[
\scrD_K(\mu,\nu) = \intr (K \star (\mu - \nu))(x) (\Delta K \star (\mu - \nu))(x)\,d\leb^d\,,
\]
and applying the integration by parts to the right hand side yields \eqref{d_k1}. We next show that 
\bq\label{hl_2}
\|\mu-\nu\|_{\dot{H}^{-1}} = \|\nabla K \star (\mu - \nu)\|_{L^2}\,.
\eq
For any $\varphi \in \dot{H}^1(\R^d)$ with $\|\varphi\|_{\dot{H}^1} \leq 1$, we find
\begin{align*}
\lt|\intr \varphi(x)(\mu - \nu)(dx) \rt| &= \lt|\intr \varphi(x) (\Delta K \star (\mu - \nu))(x)\,d\leb^d \rt|\cr
&= \lt|\intr \nabla \varphi(x) \cdot (\nabla K \star (\mu - \nu))(x)\,d\leb^d \rt|,
\end{align*}
and hence,
\[
	\|\mu-\nu\|_{\dot{H}^{-1}} = \sup_{\|\nabla \varphi\|_{L^2}\le 1} \lt|\intr \nabla \varphi(x) \cdot (\nabla K \star (\mu - \nu))(x)\,d\leb^d \rt| = \|\nabla K\star (\mu-\nu)\|_{L^2}\,.
\]
This gives the assertion \eqref{hl_2}. Hence we have
\[
\scrD_K(\mu,\nu) = -\|\nabla K \star (\mu - \nu)\|_{L^2}^2 = - \|\mu-\nu\|_{\dot{H}^{-1}}^2 \ge - c_\infty \d_2^2(\mu,\nu)\,,
\]
where the last inequality follows from Proposition \ref{prop:W2-homogeneous}.
\end{enumerate}
\end{proof}

\begin{remark}
	\begin{enumerate}[label=(\roman*)]
		\item For purely repulsive interactions, i.e.\ interaction potentials $K$ giving rise to positive definite kernels, and hence $\scrD_K(\mu,\nu)\ge 0$ for any $\mu,\nu\in\calP(\R^d)$. 
		\item As mentioned in the introduction, it is possible to consider linear combinations of interaction potentials satisfying each of the conditions above. For instance, one could consider an interaction kernels satisfying
	\[
		\|K\|_{W^{2,1}(B_{2R})} + \|\nabla^2 K\|_{L^\infty(\R^d\setminus B_{R})}<\infty\,
	\]
	for some $R>0$, we can combine Theorem~\ref{thm:DK-interaction}(1) and \ref{thm:DK-interaction}(2) to obtain 
	\[
		|\scrD_K(\mu,\nu)| \le c_{K,R,\infty}\,\d_2^2(\mu,\nu)\,,
	\]
	for some constant $c_{K,R,\infty}>0$, depending only on $K$, $R$, and $c_\infty$.

	\end{enumerate}	
\end{remark}

\subsection{Stability estimates for 2-Wasserstein gradient flows}\label{sec:EVI}

As mentioned in the introduction, the aggregation-diffusion equation \eqref{eq:diffusion} may be formulated as a 2-Wasserstein (or Otto) gradient flow.
Under sufficient regularity of the solution to \eqref{eq:diffusion}, this formulation may be made rigorous \cite{AGS05,Vil03}. 
%
More precisely, $\scrE$-regular solutions (cf.\ Definition~\ref{def:regular-solution}) to \eqref{eq:diffusion} may be 
characterized by the family of {\em Evolution Variational Inequalities (EVI)}
\begin{align}\label{eq:EVI}
	\frac{1}{2}\frac{d}{dt}\d_2^2(\rho_t,\nu) \le \scrE(\nu) - \scrE(\rho_t) + \frac{\lambda}{2}\,\d_2^2(\rho_t,\nu)\qquad\text{for all $\nu\in\text{dom}(\scrE)$}\,,
\end{align}
for an appropriate constant $\lambda\in\R$.

In the case when $\lambda\in\R$ is independent of $\rho$ and $\nu$, the EVI may be used to deduce the uniqueness of gradient flow solutions. This follows from \cite[Lemma 4.3.4]{AGS05} (cf.\ \cite[Theorem 11.1.4]{AGS05} for the uniqueness argument; see also \cite{CLM2014}).
\begin{align}\label{eq:W2-doubling}
	\frac{1}{2}\frac{d}{dt}\d_2^2(\rho_t^1,\rho_t^2) \le \frac{1}{2}\frac{d}{dt}\d_2^2(\rho_t^1,\rho_s^2)\Bigl|_{s=t} \ +\ \frac{1}{2}\frac{d}{ds}\d_2^2(\rho_t^1,\rho_s^2)\Bigl|_{t=s} \le \lambda\,\d_2^2(\rho_t^1,\rho_t^2)\,,
\end{align}
which, by application of the Gr\"onwall's lemma, yields the stability estimate
\[
	\d_2(\rho_t^1,\rho_t^2) \le e^{\lambda t}\,\d_2(\rho_0^1,\rho_0^2)\,\qquad\text{for all $t\ge 0$}\,.
\]

\medskip

In the following, we establish a similar system of inequalities for the solution to our intermediate system \eqref{eq:aux-rho}, which we recall here for convenience:
\begin{align*}
	\left\{\quad
	\begin{aligned}
		\partial_t\bar\rho_t^\gamma + \nabla\cdot \bar\jmath_t^\gamma &= \Delta\bar\rho_t^\gamma\,,\\
		\bar\jmath_t^\gamma(dx) &= \intr \sfF(x-v/\gamma,\rho_t^\gamma)\,(\Gamma^\gamma_\#\mu_t^\gamma)(dxdv)\,.
	\end{aligned}\right.
\end{align*}
We begin by recalling a known result for the temporal derivative of the 2-Wasserstein distance along the continuity equation.

\begin{proposition}[\cite{AGS05,Vil03}]\label{prop:W2-derivative}
	Let $(\varrho_t)_{t\in[0,T]}\subset(\calP_2\cap L^1)(\R^d)$ satisfy the continuity equation
	\[
		\partial_t\varrho_t + \nabla\cdot(\varrho_t\, \xi_t) = 0	\qquad\text{in the sense of distributions}
	\]
	for some vector field with $\int_0^T\|\xi_t\|_{L^2(\varrho_t)}^2\,dt<\infty$. Then for any $\nu\in(\calP_2\cap L^1)(\R^d)$,
	\[
		\frac{1}{2}\frac{d}{dt}\d_2^2(\varrho_t,\nu) = \intr \langle x-\sfT_t(x), \xi_t(x)\rangle\, d\varrho_t\,,
	\]
	where $\sfT_t{}_\#\varrho_t=\nu$ for almost every $t\in(0,T)$.
\end{proposition}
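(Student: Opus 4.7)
The plan is to combine three ingredients. First, since $\varrho_t\in L^1(\R^d)$, it is absolutely continuous with respect to the Lebesgue measure, so Brenier's theorem yields a unique optimal transport map $\sfT_t$ with $\sfT_t{}_\#\varrho_t=\nu$, of the form $\sfT_t(x) = x-\nabla\varphi_t(x)$ for a convex Kantorovich potential $\tfrac12|x|^2-\varphi_t$; equivalently, $\nabla\varphi_t(x) = x-\sfT_t(x)$ for $\varrho_t$-a.e.\ $x$. Second, the assumption $\int_0^T\|\xi_t\|_{L^2(\varrho_t)}^2\,dt<\infty$ together with the continuity equation implies, by a standard result (see \cite[Theorem 8.3.1]{AGS05}), that $t\mapsto\varrho_t$ is a 2-absolutely continuous curve in $(\calP_2(\R^d),\d_2)$ with metric derivative bounded by $\|\xi_t\|_{L^2(\varrho_t)}$. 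Consequently, $t\mapsto\d_2^2(\varrho_t,\nu)$ is absolutely continuous and differentiable almost everywhere.

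For the identification of the derivative, the plan is to use Kantorovich duality
\[
\tfrac12\d_2^2(\varrho_t,\nu) = \intr\varphi_t\,d\varrho_t + \intr\varphi_t^c\,d\nu\,,
\]
where $\varphi_t^c$ is the $c$-transform of $\varphi_t$. Using $(\varphi_t,\varphi_t^c)$ as a \emph{suboptimal} pair at time $s$ close to $t$, one obtains
\[
\tfrac12\bigl(\d_2^2(\varrho_s,\nu) - \d_2^2(\varrho_t,\nu)\bigr) \ge \intr\varphi_t\,(\varrho_s - \varrho_t)(dx)\,.
\]
Testing the continuity equation against (a suitable approximation of) $\varphi_t$ on the interval $[t,s]$ transforms the right-hand side into $\int_t^s\!\int\langle\nabla\varphi_t,\xi_r\rangle\,d\varrho_r\,dr$. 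Dividing by $s-t>0$, sending $s\downarrow t$, and applying Lebesgue differentiation together with $\nabla\varphi_t(x) = x-\sfT_t(x)$ gives
\[
\liminf_{s\downarrow t}\frac{\d_2^2(\varrho_s,\nu) - \d_2^2(\varrho_t,\nu)}{2(s-t)} \ge \intr \langle x-\sfT_t(x),\xi_t(x)\rangle\,d\varrho_t\,.
\]
The reverse inequality, and the matching left-derivative estimates, follow by swapping the roles of $s$ and $t$: one uses $\varphi_s$ as the suboptimal potential in the formula for $\tfrac12\d_2^2(\varrho_t,\nu)$, and invokes stability of optimal transport maps under $\d_2$-convergence (so that $\sfT_s\to\sfT_t$ in $L^2(\varrho_t)$ as $s\to t$).

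The main obstacle will be the low regularity of the Kantorovich potential: $\varphi_t$ is only convex and locally Lipschitz on $\operatorname{supp}\varrho_t$, hence not directly admissible as a test function in the distributional formulation of the continuity equation. I would address this by a mollification-and-cutoff procedure on $\varphi_t$, passing to the limit using the bounds $\int|\nabla\varphi_t|^2\,d\varrho_t = \d_2^2(\varrho_t,\nu)<\infty$ together with the $L^2(dt;L^2(\varrho_t))$-integrability of $\xi_t$ (which is exactly what the hypothesis provides). The Rademacher-type differentiability of convex functions, as used by Gangbo--McCann, is what ultimately justifies the pointwise identity $\nabla\varphi_t(x)=x-\sfT_t(x)$ employed in the final step.
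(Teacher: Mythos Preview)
The paper does not prove this proposition; it is stated with a citation to \cite{AGS05,Vil03} and used as a black box. Your sketch follows the standard argument found in those references (in particular \cite[Theorem~8.4.7]{AGS05}), combining absolute continuity of the curve via \cite[Theorem~8.3.1]{AGS05}, Brenier's theorem, and the Kantorovich-duality differentiation argument, so there is nothing to compare.
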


To deduce the EVI for the evolutions we consider, we will need to compute the subdifferential of $\scrE$ with respect to the 2-Wasserstein distance. While the following proposition is simply a combination of results known in the literature, we include the proof for the sake of completeness.

\begin{proposition}\label{prop:subdifferential}
	Let $\rho,\nu\in \calP_2(\R^d)\cap\text{dom}(\scrE)$ with $\scrD_K(\rho,\nu)<\infty$ and 
	\[
		\intr \frac{|\nabla\rho|^2}{\rho}\, d\leb^d <\infty,\qquad \nabla K\star\rho\in L^\infty(\R^d)\,,
	\]
	and let $\sfT:\R^d\to\R^d$ be the optimal transport map between $\rho$ and $\nu$, i.e.\ $\sfT{}_\#\rho=\nu$.  Then
		\begin{align}\label{eq:subdifferential}
		\begin{aligned}
		\scrE(\nu) - \scrE(\rho) &\ge \intr \langle \sfT(x)-x,(\nabla\rho)(x)\rangle\,d\leb^d + \frac{1}{2}\scrD_K(\rho,\nu) \cr
		&\qquad\qquad - \int_0^1 \intr \langle \sfT(x)-x,\sfF(\sfT^\theta(x),\rho) \rangle\,\rho(dx)\,d\theta\,,
		\end{aligned}
	\end{align}
	where $\sfT^\theta(x) = (1-\theta)x + \theta \sfT(x)$, $\theta\in(0,1)$ for $\rho$-almost every $x\in\R^d$.
\end{proposition}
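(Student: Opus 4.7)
The plan is to split the free energy $\scrE$ into its entropy, external-potential, and interaction pieces, and to estimate each along the displacement interpolation $\rho^\theta:=\sfT^\theta_\#\rho$ joining $\rho=\rho^0$ to $\nu=\rho^1$. The entropy piece will be controlled from below by an inequality coming from McCann's displacement convexity, while the potential and interaction pieces will yield exact identities via the fundamental theorem of calculus applied in $\theta$.

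\textbf{Entropy step.} I will invoke McCann's theorem, which gives that $\theta\mapsto \Ent(\rho^\theta|\leb^d)$ is convex on $[0,1]$. The finite Fisher-information hypothesis $\intr |\nabla\rho|^2/\rho\,d\leb^d<\infty$ places $\nabla\log\rho$ in $L^2(\rho)$, and the standard strong-subdifferential characterization (cf.\ Chapter 10 of \cite{AGS05}) then delivers
\[
\Ent(\nu|\leb^d)-\Ent(\rho|\leb^d)\ge \intr \langle \sfT(x)-x,\nabla\rho(x)\rangle\,d\leb^d,
\]
which is exactly the first term on the right-hand side of \eqref{eq:subdifferential}.

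\textbf{Potential and interaction steps.} For the potential term, I will differentiate $\theta\mapsto\intr \Phi\,d\rho^\theta = \intr\Phi\circ\sfT^\theta\,d\rho$ (justified by $(\mathbf{A}_\Phi^1)$ and $\rho\in\calP_2(\R^d)$) and integrate in $\theta$ to obtain
\[
\intr\Phi\,d\nu-\intr\Phi\,d\rho=\int_0^1\intr\langle\nabla\Phi(\sfT^\theta(x)),\sfT(x)-x\rangle\rho(dx)\,d\theta.
\]
For the interaction term, I will expand $\scrD_K(\rho,\nu)$ from its definition and use that $\tfrac12\intrr K(x-y)\rho(dx)\rho(dy)$ depends only on the symmetric part of $K$ to obtain the algebraic identity
\[
\tfrac12\intrr K(x-y)[\nu(dx)\nu(dy) - \rho(dx)\rho(dy)] = \tfrac12\scrD_K(\rho,\nu) + [h(1) - h(0)],
\]
with $h(\theta):=\intrr K(\sfT^\theta(x)-y)\rho(dx)\rho(dy)$. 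Since $\nabla K\star\rho\in L^\infty(\R^d)$, $h$ is absolutely continuous on $[0,1]$ with $h'(\theta)=\intr\langle(\nabla K\star\rho)(\sfT^\theta(x)),\sfT(x)-x\rangle\rho(dx)$, so $h(1)-h(0)=\int_0^1 h'(\theta)\,d\theta$.

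\textbf{Assembly and main obstacle.} Summing the three contributions and recalling that $\sfF(z,\rho)=-\nabla\Phi(z)-(\nabla K\star\rho)(z)$, the two $\int_0^1(\cdots)\,d\theta$-terms merge into $-\int_0^1\intr \langle \sfT(x)-x,\sfF(\sfT^\theta(x),\rho)\rangle\rho(dx)\,d\theta$, producing \eqref{eq:subdifferential}. The main obstacle lies in the entropy step: the strong-subdifferential characterization of $\Ent(\cdot|\leb^d)$ along Wasserstein geodesics is precisely where the finite Fisher-information hypothesis enters, and its careful justification accounts for most of the work; the remaining two steps are direct applications of the fundamental theorem of calculus, permitted by the assumed $L^\infty$-bound on $\nabla K\star\rho$ and the at-most-linear growth of $\nabla\Phi$.
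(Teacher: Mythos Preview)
Your proposal is correct and follows essentially the same approach as the paper: split $\scrE$ into entropy, potential, and interaction pieces; handle the entropy via displacement convexity and the strong-subdifferential characterization from \cite[Chapter 10]{AGS05} (the paper makes the integration-by-parts step via the approximate divergence and Calderon--Zygmund a bit more explicit, but this is what underlies the result you cite); and treat the potential and interaction pieces by the same fundamental-theorem-of-calculus argument and algebraic expansion of $\scrD_K$ that the paper uses.
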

\begin{proof}
Due to the convexity of $\theta \mapsto \Ent(\sfT^\theta_\#\rho|\leb^d)$ \cite{Mc97}, we have that
\begin{align*}
	\Ent(\nu|\leb^d) - \Ent(\rho|\leb^d) &\ge \frac{1}{\theta}\lt(\Ent(\sfT^\theta_\#\rho|\leb^d) - \Ent(\rho|\leb^d)\rt),\qquad\theta\in(0,1)\,.
\end{align*}
In particular, the inequality holds true also in the limit $\theta\to 0$, which yields 
\begin{align*}
	\Ent(\nu|\leb^d) - \Ent(\rho|\leb^d) 
	&\ge -\intr \rho(x)\,\text{tr}(\tilde\nabla(\sfT-id))(x)\,d\leb^d\,,
\end{align*}
(cf.\ \cite[Lemma 10.4.4]{AGS05}) where $\text{tr}\tilde\nabla$ denotes the approximate divergence. 

By means of approximation with smooth cut-off functions and the Calderon--Zygmund theorem (cf.\ \cite[Theorem~3.83]{AFP2000}), one can show that the (weak) integration by parts hold under the above assumption on $\rho$, i.e.\ we have that
\[
	-\intr \rho(x)\,\text{tr}(\tilde\nabla(\sfT-id))(x)\,d\leb^d \ge \intr \langle \sfT(x) - x,(\nabla\rho)(x)\rangle\,d\leb^d\,.
\]
Together, this yields the inequality
\begin{align*}
	\Ent(\nu|\leb^d) - \Ent(\rho|\leb^d) \ge \intr \langle \sfT(x)-x,\nabla\rho(x)\rangle\,\leb^{d}(dx)\,.
\end{align*} 
As for the other terms, we simply perform basic algebraic manipulations to obtain
\begin{align*}
	&\intr (K\star \nu)(x)\,\nu(dx) - \intr (K\star \rho)(x)\,\rho(dx) \\
	&\quad\qquad = \intr (K\star (\nu-\rho))\, \nu(dx) + \intr (K\star\rho)(x)\,\nu(dx) - \intr (K\star \rho)(x)\,\rho(dx) \\
	&\quad\qquad = \scrD_K(\rho,\nu) + 2\intr \lt[(K\star\rho)(\sfT(x)) - (K\star\rho)(x)\rt] \rho(dx) \\
	&\quad\qquad = \scrD_K(\rho,\nu) + 2\int_0^1 \intr \langle \sfT(x)-x,(\nabla K\star\rho)(\sfT^\theta(x))\rangle\, \rho(dx)\,d\theta\,,
\end{align*}
and 
\begin{align*}
	\intr \Phi(x)\,\nu(dx) - \intr \Phi(x)\,\rho(dx) = \int_0^1\intr \langle \sfT(x)-x,(\nabla\Phi)(\sfT^\theta(x))\rangle\,\rho(dx)\,d\theta\,.
\end{align*}
Summing each of the contributions concludes the proof.
\end{proof}

%
%
%
%

By combining Propositions~\ref{prop:W2-derivative} and \ref{prop:subdifferential}, we easily obtain the following result, which ultimately allows us to deduce our main stability estimate \eqref{EVI0}. 

\begin{theorem}\label{thm:EVI-like}
	Let $(\varrho_t)_{t\in[0,T]}\subset\calP(\R^d)\cap\text{dom}(\scrE)$ satisfy
	\[
		\partial_t\varrho_t + \nabla\cdot\bigl(\varrho_t\,(\sfF(\cdot,\varrho_t) + \mathsf{e}_t)\bigr) = \Delta\varrho_t\quad \mbox{with} \quad \int_0^T\intr\frac{|\nabla \varrho_t|^2}{\varrho_t} \,d\leb^ddt<\infty\,,
	\]
	where 	
	$\mathsf{e}_t$ is a vector field with $\int_0^T \|\mathsf{e}_t\|_{L^2(\varrho_t)}^2\,dt <\infty$ and $\sfF(\cdot,\varrho)=-\nabla\Phi-\nabla K\star\varrho$. Suppose further that the map $x\mapsto\sfF(x,\varrho_t)$ is Lipschitz for every $t\in[0,T]$ with
	\[
		c_0:=\sup_{t\in[0,T]}\|\sfF(\cdot,\varrho_t)\|_{\Lip}<\infty\,.
	\]
	Then for any $\nu\in\text{dom}(\scrE)$, the evolution-variational-like inequality
	\begin{align}\label{eq:EVI-like}
		\frac{1}{2}\frac{d}{dt}\d_2^2(\varrho_t,\nu) &\le \scrE(\nu) - \scrE(\varrho_t) - \frac{1}{2}\scrD_K(\varrho_t,\nu) + \frac{\lambda}{2}\,\d_2^2(\varrho_t,\nu) + \frac{1}{2}\|\mathsf{e}_t\|_{L^2(\varrho_t)}^2
	\end{align}
	holds for almost every $t\in(0,T)$, with $\lambda = 1 + c_0$.
	
	In particular, if $\mathsf{e}\equiv 0$, then we simply obtain the evolution variational inequality \eqref{eq:EVI}.
\end{theorem}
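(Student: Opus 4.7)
The plan is to recast the PDE for $(\varrho_t)$ as a continuity equation and then combine Propositions~\ref{prop:W2-derivative} and \ref{prop:subdifferential}, handling a cross term between the driving force $\sfF$ and the optimal transport displacement by exploiting the Lipschitz assumption on $\sfF(\cdot,\varrho_t)$.

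\textbf{Step 1 (rewrite as a continuity equation).} Using $\Delta\varrho_t = \nabla\cdot(\varrho_t\nabla\log\varrho_t)$, I rewrite the equation as
\[
    \partial_t\varrho_t + \nabla\cdot(\varrho_t\,\xi_t) = 0, \qquad \xi_t := \sfF(\cdot,\varrho_t) + \mathsf{e}_t - \nabla\log\varrho_t.
\]
The three pieces of $\xi_t$ are in $L^2(\varrho_t)$ in integrated form: the $\nabla\log\varrho_t$ part by the assumed Fisher-information integrability $\int_0^T\!\!\int|\nabla\varrho_t|^2/\varrho_t\,d\leb^d\,dt<\infty$; the $\mathsf{e}_t$ part by assumption; and the $\sfF(\cdot,\varrho_t)$ part by the Lipschitz bound together with a second moment bound on $\varrho_t$ (which follows since $\varrho_t\in\text{dom}(\scrE)\subset\calP_2(\R^d)$). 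This justifies applying Proposition~\ref{prop:W2-derivative}.

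\textbf{Step 2 (differentiate and split).} Let $\sfT_t$ denote the optimal transport map from $\varrho_t$ to $\nu$. Proposition~\ref{prop:W2-derivative} gives
\[
    \tfrac12\tfrac{d}{dt}\d_2^2(\varrho_t,\nu) = \intr \langle x-\sfT_t(x),\xi_t(x)\rangle\,\varrho_t(dx) = I_1 + I_2 + I_3,
\]
with $I_1,I_2,I_3$ corresponding to $\sfF,\mathsf{e}_t,-\nabla\log\varrho_t$, respectively.

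\textbf{Step 3 (use the subdifferential inequality on $I_3$).} Rewriting $I_3 = \int_{\R^d}\langle \sfT_t(x)-x,\nabla\varrho_t(x)\rangle\,d\leb^d$ and invoking Proposition~\ref{prop:subdifferential} (all of whose hypotheses follow from the standing assumptions on $(\varrho_t)$ and the Lipschitz bound on $\sfF(\cdot,\varrho_t)$, in particular $\nabla K\star\varrho_t\in L^\infty$), I obtain
\begin{align*}
    I_3 &\le \scrE(\nu)-\scrE(\varrho_t) - \tfrac12\scrD_K(\varrho_t,\nu) + \int_0^1\!\!\intr\langle \sfT_t(x)-x,\sfF(\sfT_t^\theta(x),\varrho_t)\rangle\,\varrho_t(dx)\,d\theta.
\end{align*}

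\textbf{Step 4 (cancel and use Lipschitz on $I_1$).} Since $I_1 = -\int\langle \sfT_t(x)-x,\sfF(x,\varrho_t)\rangle\,\varrho_t(dx)$, the sum $I_1$ plus the last integral in the bound on $I_3$ equals
\[
    \int_0^1\!\!\intr \langle\sfT_t(x)-x,\,\sfF(\sfT_t^\theta(x),\varrho_t)-\sfF(x,\varrho_t)\rangle\,\varrho_t(dx)\,d\theta.
\]
Using $|\sfT_t^\theta(x)-x|=\theta|\sfT_t(x)-x|$ and $\|\sfF(\cdot,\varrho_t)\|_{\Lip}\le c_0$, this is bounded by $\frac{c_0}{2}\d_2^2(\varrho_t,\nu)$. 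The term $I_2$ is controlled by Young's inequality as $I_2 \le \tfrac12\d_2^2(\varrho_t,\nu) + \tfrac12\|\mathsf{e}_t\|_{L^2(\varrho_t)}^2$. Adding everything yields the claim with $\lambda = 1+c_0$; for $\mathsf{e}\equiv 0$, $I_2$ vanishes and discarding the nonpositive $-\tfrac12\scrD_K$ term (or keeping it) recovers \eqref{eq:EVI}.

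The main technical obstacle I expect is the rigorous application of Proposition~\ref{prop:subdifferential} at almost every $t$, which requires verifying its hypotheses pointwise in time; this will need a measurable selection of optimal maps $\sfT_t$ and using that the integrability bounds on $\nabla\varrho_t$ and $\nabla K\star\varrho_t$ are assumed in the right sense. The Lipschitz-in-$x$ assumption on $\sfF(\cdot,\varrho_t)$ is precisely the ingredient that closes the cross-term estimate and produces the finite $\lambda$.
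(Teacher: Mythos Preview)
Your proof is correct and follows essentially the same route as the paper: apply Proposition~\ref{prop:W2-derivative} to the continuity-equation form, invoke Proposition~\ref{prop:subdifferential} for the entropy/potential contribution, then close the cross term via the Lipschitz bound and $|\sfT_t^\theta(x)-x|=\theta|\sfT_t(x)-x|$, and handle $\mathsf{e}_t$ by Young's inequality. One small caveat in your final remark: $-\tfrac12\scrD_K$ is not in general nonpositive (only when $K$ is positive definite), so the reduction to \eqref{eq:EVI} when $\mathsf{e}\equiv 0$ should be read as yielding an EVI-type inequality rather than \eqref{eq:EVI} verbatim.
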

\begin{proof}
	By the integrability assumption on the velocity fields, we have that $t\mapsto\varrho_t$ is absolutely continuous with respect to  $\d_2$ and thus Proposition \ref{prop:W2-derivative} applies. Together with \eqref{eq:subdifferential}, we obtain
\begin{align*}
\frac{1}{2}\frac{d}{dt}\d_2^2(\varrho_t,\nu) 
&\le \scrE(\nu) - \scrE(\varrho_t) - \frac{1}{2}\scrD_K(\varrho_t,\nu) + \intr \langle x-\sfT_t(x), \mathsf{e}_t(x)\rangle\, d\varrho_t \\
&\hspace{6em} - \int_0^1 \intr \langle \sfT_t(x)-x,\sfF(x,\varrho_t) - \sfF(\sfT_t^\theta(x),\varrho_t)\rangle\,d\varrho_t\,d\theta \\
&\le \scrE(\nu) - \scrE(\varrho_t) - \frac{1}{2}\scrD_K(\varrho_t,\nu) + \frac{1}{2}\,\d_2^2(\varrho_t,\nu) + \frac{1}{2}\|\mathsf{e}_t\|_{L^2(\varrho_t)}^2 \\
&\hspace{6em} + c_0 \d_2(\varrho_t,\nu)\int_0^1 \d_2(\sfT_t^\theta {}_\#\varrho_t,\nu)\,d\theta\,,
\end{align*}
	where $\sfT^\theta_t(x) = (1-\theta)x + \theta \sfT_t(x)$, $\theta\in(0,1)$ for $\varrho_t$-almost every $x\in\R^d$. From the property of geodesic interpolants \cite{Mc97}, we have that $\d_2(\sfT_t^\theta{}_\#\varrho_t,\nu)=\theta \d_2(\varrho_t,\nu)$, and hence,
	\[
		\int_0^1 \d_2(\sfT_t^\theta {}_\# \varrho_t,\nu)\,d\theta = \frac{1}{2}\d_2(\varrho_t,\nu)\,,
	\]
	thereby concluding the proof.
\end{proof}

Due to Theorem \ref{thm:EVI-like}, we find that $(\bar\rho_t^\gamma)$, solving the intermediate problem \eqref{eq:aux-rho}, satisfies the evolution-variational-like inequality \eqref{eq:EVI-like} with
\[
	\mathsf{e}_t^\gamma(x) := \frac{d\bar\jmath_t^\gamma}{d\bar\rho_t^\gamma}(x) - \sfF(x,\bar\rho_t^\gamma)\qquad\text{for $\bar\rho_t^\gamma$-almost every $x\in\R^d$\,.}
\]
Consequently, we deduce our main stability estimate from the first equality in \eqref{eq:W2-doubling}: 
\begin{align}\label{eq:W2-stability}
	\frac{1}{2}\frac{d}{dt}\d_2^2(\bar\rho_t^\gamma,\rho_t) \le  \lambda \d_2^2(\bar\rho_t^\gamma,\rho_t) - \scrD_K(\bar\rho_t^\gamma,\rho_t) + \frac{1}{2}\|\mathsf{e}_t^\gamma\|_{L^2(\bar\rho_t^\gamma)}^2\,.
\end{align}

\begin{remark}
	In addition to requiring $\int_0^T \|\mathsf{e}_t^\gamma\|_{L^2(\bar\rho_t)}^2\,dt <\infty$, $(\bar\rho_t^\gamma)$ is required to satisfy the same regularity assumptions for $\scrE$-regular solutions (cf.\ Definition~\ref{def:regular-solution}) for Theorem~\ref{thm:EVI-like} to apply.
\end{remark}


Finally, we provide the integrability estimate of $\mathsf{e}_t^\gamma$.
\begin{lemma}\label{lem_ee} The following holds.
\[
\|\mathsf{e}_t^\gamma\|_{L^2(\bar\rho_t^\gamma)}^2 \leq \intrr \lt|\sfF(x,\rho_t^\gamma) - \sfF(x+v/\gamma,\bar\rho_t^\gamma)\rt|^2 d\mu_t^\gamma.
\]
\end{lemma}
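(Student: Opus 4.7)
The plan is to recognise $\mathsf{e}_t^\gamma$ as a conditional expectation with respect to $\bar\mu_t^\gamma := \Gamma^\gamma_\#\mu_t^\gamma$, and then apply Jensen's inequality followed by a change of variables.

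First I would disintegrate $\bar\mu_t^\gamma$ against its $x$-marginal $\bar\rho_t^\gamma$, writing
\[
\bar\mu_t^\gamma(dxdv) = \bar\rho_t^\gamma(dx)\,\bar\mu_t^{\gamma,x}(dv),
\]
where $\{\bar\mu_t^{\gamma,x}\}_{x\in\R^d}$ is a Borel family of probability measures on $\R^d$. Using the defining expression
\[
\bar\jmath_t^\gamma(dx) = \intr \sfF(x-v/\gamma,\rho_t^\gamma)\,(\Gamma^\gamma_\#\mu_t^\gamma)(dxdv),
\]
this disintegration immediately identifies the Radon--Nikodym derivative as
\[
\frac{d\bar\jmath_t^\gamma}{d\bar\rho_t^\gamma}(x) = \intr \sfF(x - v/\gamma,\rho_t^\gamma)\,\bar\mu_t^{\gamma,x}(dv).
\]
Since $\sfF(x,\bar\rho_t^\gamma)$ is independent of $v$ and $\bar\mu_t^{\gamma,x}$ is a probability measure, we can rewrite
\[
\mathsf{e}_t^\gamma(x) = \intr \Bigl[\sfF(x-v/\gamma,\rho_t^\gamma) - \sfF(x,\bar\rho_t^\gamma)\Bigr]\,\bar\mu_t^{\gamma,x}(dv).
\]

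Next, I would apply Jensen's inequality fibre-by-fibre to obtain
\[
|\mathsf{e}_t^\gamma(x)|^2 \le \intr \bigl|\sfF(x-v/\gamma,\rho_t^\gamma) - \sfF(x,\bar\rho_t^\gamma)\bigr|^2\,\bar\mu_t^{\gamma,x}(dv),
\]
and then integrate in $x$ against $\bar\rho_t^\gamma$, reassembling the disintegration to get
\[
\|\mathsf{e}_t^\gamma\|_{L^2(\bar\rho_t^\gamma)}^2 \le \intrr \bigl|\sfF(x-v/\gamma,\rho_t^\gamma) - \sfF(x,\bar\rho_t^\gamma)\bigr|^2 (\Gamma^\gamma_\#\mu_t^\gamma)(dxdv).
\]

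Finally, I would undo the push-forward. Using that $\Gamma^\gamma(x,v) = (x+v/\gamma,v)$, the change of variables gives
\[
\intrr f(x,v)\,(\Gamma^\gamma_\#\mu_t^\gamma)(dxdv) = \intrr f(x+v/\gamma, v)\,\mu_t^\gamma(dxdv),
\]
applied with $f(x,v) = |\sfF(x-v/\gamma,\rho_t^\gamma) - \sfF(x,\bar\rho_t^\gamma)|^2$. The substitution $x \mapsto x + v/\gamma$ cancels the $-v/\gamma$ in the first argument, producing exactly the right-hand side in the statement. No step is really delicate here; the only subtlety is making sure the disintegration is valid (which it is, since $\bar\rho_t^\gamma$ is the $x$-marginal of a probability measure on $\R^d\times\R^d$), and the Jensen step requires nothing beyond $\bar\mu_t^{\gamma,x}$ being a probability measure for $\bar\rho_t^\gamma$-a.e.\ $x$.
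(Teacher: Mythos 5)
Your proof is correct and is in substance the same as the paper's: both rest on the fact that $\mathsf{e}_t^\gamma$ is the projection (conditional expectation) of $\sfF(x-v/\gamma,\rho_t^\gamma)-\sfF(x,\bar\rho_t^\gamma)$ onto functions of $x$ under $\Gamma^\gamma_\#\mu_t^\gamma$, followed by the change of variables $\Gamma^\gamma$. The paper phrases the contraction step by duality, testing against $\varphi\in L^2(\bar\rho_t^\gamma)$ and applying Cauchy--Schwarz together with $\|\varphi\|_{L^2(\Gamma^\gamma_\#\mu_t^\gamma)}=\|\varphi\|_{L^2(\bar\rho_t^\gamma)}$, whereas you phrase it via disintegration and Jensen; these are equivalent.
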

\begin{proof}
For any $\varphi \in L^2(\bar\rho_t^\gamma)$, we have
\begin{align*}
\lt|\intr \langle \varphi(x),\mathsf{e}_t^\gamma(x)\rangle\,d\bar\rho_t^\gamma\rt| &= \lt|\intrr \langle \varphi(x), \sfF(x-v/\gamma,\rho_t^\gamma) - \sfF(x,\bar\rho_t^\gamma)\rangle\,d\Gamma^\gamma_\#\mu_t^\gamma\rt| \\
&\le \|\varphi\|_{L^2(\bar\rho_t^\gamma)}\lt(\intrr \lt|\sfF(x,\rho_t^\gamma) - \sfF(x+v/\gamma,\bar\rho_t^\gamma)\rt|^2 d\mu_t^\gamma\rt)^{1/2},
\end{align*}
due to the fact that $\|\varphi\|_{L^2(\Gamma^\gamma_\#\mu_t^\gamma)} = \|\varphi\|_{L^2(\bar\rho_t^\gamma)}$. This completes the proof.
\end{proof}

%
%
%
%
%
%
%
\section{Stability for bounded and Lipschitz interaction forces}\label{sec:bL}

We finally arrive at our first quantitative estimate that applies to globally Lipschitz continuous and bounded interaction forces, i.e.,
\[
		c_K:=\|\nabla K\|_{L^\infty} + \|\nabla K\|_{\Lip} <\infty\,.
\]
Under this assumption, one easily verifies that
\begin{subequations}
\begin{enumerate}[label=(\roman*)]
	\item for any $\mu\in\calP(\R^d)$,
	\begin{align}\label{eq:smooth-property-1}
		\|\nabla K\star\mu\|_{L^\infty} \le \|\nabla K\|_{L^\infty} \leq c_K\,,
	\end{align}
	\item and for any $\mu,\nu\in\calP_2(\R^d)$ and every $x, y\in\R^d$,
	\begin{align}\label{eq:smooth-property-2}
		|\sfF(x,\mu) - \sfF(y,\nu)| \le c_{\sfF}\lt(|x- y| + \d_2(\mu,\nu)\rt)
	\end{align}
	with $c_{\sfF} = \|\nabla \Phi\|_{\Lip} + \|\nabla K\|_{\Lip}$. Indeed, we find 
	\[
	|\sfF(x,\mu) - \sfF(y,\nu)| \leq |(\nabla \Phi)(x) - (\nabla \Phi)(y)| + |(\nabla K\star\mu)(x) - (\nabla K\star\nu)(y)|,
	\]
	where we can easily obtain $ |(\nabla \Phi)(x) - (\nabla \Phi)(y)| \leq  \|\nabla \Phi\|_{\Lip}|x-y|$. On the other hand, if we set $\sfT$ as an optimal map giving $\nu = \sfT_\# \mu$, then
	\begin{align*}
	||(\nabla K\star\mu)(x) - (\nabla K\star\nu)(y)| &\leq \intr |(\nabla K) (x-z) - (\nabla K)(y-\sfT(z))| \,\mu(dz)\cr
	&\leq \|\nabla K\|_{\Lip} \intr (|x-y| + |z - \sfT(z)|)\, \mu(dz)\cr
	&\leq \|\nabla K\|_{\Lip}\lt(|x-y| + \d_2(\mu,\nu) \rt).
	\end{align*}
	Combining the above estimates yields \eqref{eq:smooth-property-2}.
\end{enumerate}
\end{subequations}




\begin{proof}[Proof of Theorem~\ref{thm_reg}]
	Due to the assumed regularity on $\nabla K$, one easily establishes the global Lipschitz continuity of $x\mapsto \sfF(x,\rho_t)$. The existence of weak solutions $\mu^\gamma\in\calC([0,T];\calP_2(\R^d\times\R^d))$ to \eqref{eq:kinetic-rescaled} can then be established by, e.g.\ standard theory of stochastic differential equations applied to the nonlinear McKean equation \eqref{mckean}. Under the assumption on the initial condition $\mu_0^\gamma$, Lemma~\ref{lem:g-estimate} together with Remark \ref{rmk:2m} applies, thereby giving the first part of the theorem.
	
	As for the second part, we begin by providing an estimate for $\|\mathsf{e}_t^\gamma\|_{L^2(\bar\rho_t^\gamma)}$. From the estimate \eqref{eq:smooth-property-2}, and Lemmas~\ref{lem_ee} and \ref{lem_rho2}, we estimate
\[
\|\mathsf{e}_t^\gamma\|_{L^2(\bar\rho_t^\gamma)}^2 \leq c_{\sfF}^2 \intrr \lt( \frac{|v|^2}{\gamma^2} + \d_2^2(\rho_t^\gamma,\bar\rho_t^\gamma)\rt) d\,\Gamma^\gamma_\#\mu_t^\gamma \leq  2\frac{c_{\sfF}^2}{\gamma^2}\intrr |v|^2 \,d\mu^\gamma_t\,.
\]
Along with Theorem~\ref{thm:DK-interaction}, the stability estimate \eqref{eq:W2-stability} yields
\[
	\frac{1}{2}\frac{d}{dt}\d_2^2(\bar\rho_t^\gamma,\rho_t) \le  (\lambda + c_K) \d_2^2(\bar\rho_t^\gamma,\rho_t) + 2\frac{c_{\sfF}^2}{\gamma^2}\intrr |v|^2 \,d\mu^\gamma_t\,.
\]
An application of Gr\"onwall's lemma then gives
\[
	\d_2^2(\bar\rho_t^\gamma,\rho_t) \le \lt(\d_2^2(\bar\rho_0^\gamma,\rho_0) + \frac{4c_{\sfF}^2}{\gamma^2}\int_0^t \intrr |v|^2 \,d\mu^\gamma_s\,ds\rt)e^{2(\lambda + c_K)t}\,.
\]
Making use of the triangle inequality for $\d_2$ and applying Lemma~\ref{lem_rho2} twice, we obtain
\begin{align*}
	\d_2^2(\rho_t^\gamma,\rho_t) &\le 2\d_2^2(\rho_t^\gamma,\bar\rho_t^\gamma) + 2\d_2^2(\bar\rho_t^\gamma,\rho_t) \\
	&\le c \lt(\d_2^2(\bar\rho_0^\gamma,\rho_0) + \frac{1}{\gamma^2}\int_0^t \intrr |v|^2 \,d\mu^\gamma_s\,ds\rt) \\
	&\le C \lt( \d_2^2(\rho_0^\gamma,\rho_0) + \frac{1}{\gamma^2}\rt)\qquad\text{for every $t\in[0,T]$\,,}
\end{align*}
where the constants $c,C>0$ are independent of $\gamma\ge 1$, thereby concluding the proof.
\end{proof}

%
%
%
%
%
%
%

\section{Uniform-in-$\gamma$ estimates in the weighted Sobolev space}\label{sec:uniform-gamma}

As stated in the introduction, in order to consider the singular interaction potentials, we need better regularity of the force field $\sfF$. This subsequently requires some uniform-in-$\gamma$ estimates of solutions in the higher-order Sobolev space compared to the regular interaction potential case. The main purpose of this section is, thus, to establish Theorem~\ref{thm:well-posedness}. 

However, since the proof is rather long and technical, we decided to only prove a priori estimates for sufficiently smooth solutions to the kinetic equation \eqref{eq:kinetic-rescaled} in this section, and postpone to complete proof of Theorem~\ref{thm:well-posedness} to Appendix~\ref{app:WP}. In particular, Theorem~\ref{thm:well-posedness} is a restatement of Theorem~\ref{thm:main-app}.

\begin{proposition}\label{lem:regularity-summary}
	Let $p\in[2,\infty)$, $k\in\{0,1\}$, and $(\mu_t^\gamma)_{t\in[0,T]}\subset\calP(\R^d\times\R^d)$, $\gamma>0$, be a sufficiently smooth weak solution of \eqref{eq:kinetic-rescaled} with
	\[
		\sup_{\gamma\ge 1}\|\mu_0^\gamma\|_{W_{x,H}^{k,p}} <\infty,\qquad H(x,v) = \Phi(x) + |v|^2/2\,.
	\]
	If
	\[
		\|\nabla K\star \varrho \|_{W^{k,\infty}} \le c_{K,p}\lt(1 + \|\nu\|_{W_{x,H}^{k,p}}\rt),\qquad \varrho = \pi^x_\#\nu,\qquad \text{$\nu\in W_{x,H}^{k,p}(\R^d\times\R^d)\,,$}
	\]
	for some constant $c_{K,p}>0$, then there exists some $T_p\in(0,T]$ such that
	\begin{align*}
		\sup\nolimits_{t\in[0,T_p]}\|\mu_t^\gamma\|_{W_{x,H}^{k,p}} \le M_k\,,
	\end{align*}
	where $M_k>0$ is a constant independent of $\gamma>0$.
	
	In particular, we have the estimate
	\[
		\sup\nolimits_{t\in[0,T_p]}\|\nabla K\star \rho_t^\gamma \|_{W^{k,\infty}} \le c_{K,p}(1 + M_k)\,,
	\]
	independently of $\gamma>0$.
\end{proposition}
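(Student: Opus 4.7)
The plan is a direct a priori energy estimate on $\|\mu_t^\gamma\|_{W^{k,p}_{x,H}}^p$ obtained by differentiating in time, substituting from \eqref{eq:kinetic-rescaled}, and integrating by parts; the crucial technical point is that all $\gamma$-scaled nondissipative terms must be absorbed by the $\gamma^2$-dissipation in $v$ so that the resulting Grönwall inequality is uniform in $\gamma$.

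I first treat $k=0$. Setting $Y(t) := \int (\mu_t^\gamma)^p e^{(p-1)H}\,d\leb^{2d}$ and substituting $\partial_t\mu^\gamma$, one integrates by parts in $x$ on the transport term and in $v$ on the drift, friction, and Laplacian terms. Working with the substitution $h := \mu^\gamma e^H$ (so that $Y = \int h^p e^{-H}$) makes the key cancellation visible: the contributions of order $\gamma^2$ coming from the friction, diffusion, and the $v$-derivatives of the weight $e^{(p-1)H}$ collapse into a single nonpositive dissipation $-\gamma^2 p(p-1) D$ with $D := \int h^{p-2}|\nabla_v h|^2 e^{-H}$. The only nondissipative leftover comes from the interaction and reads
\[
-\gamma(p-1)\int h^p\,v\cdot(\nabla K\star\rho_t^\gamma)\,e^{-H}\,.
\]
Using $v\,e^{-H}=-\nabla_v e^{-H}$ and integrating by parts in $v$ rewrites it as $-p(p-1)\gamma\int h^{p-1}(\nabla K\star\rho_t^\gamma)\cdot\nabla_v h\,e^{-H}$, which by Cauchy–Schwarz is bounded by $p(p-1)\gamma\,\|\nabla K\star\rho_t^\gamma\|_{L^\infty}\sqrt{Y D}$. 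A Young inequality with parameter proportional to $1/\gamma$ matches the dissipation coefficient $\gamma^2 p(p-1)$ exactly and leaves a $\gamma$-independent remainder; combined with the hypothesis $\|\nabla K\star\rho_t^\gamma\|_{L^\infty}\le c_{K,p}(1+Y^{1/p})$ this gives $Y'(t) \le C(1+Y(t)^{1/p})^2 Y(t)$ with $C=C(p,c_{K,p})$, which has a local-in-time solution on a $\gamma$-independent interval $[0,T_p]$ bounded by some $M_0$.

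For $k=1$, I apply $\partial_{x_j}$ to \eqref{eq:kinetic-rescaled} and observe that $\eta_j := \partial_{x_j}\mu^\gamma$ solves the same PDE with an extra forcing $-\gamma\,\partial_{x_j}\sfF(\cdot,\rho^\gamma)\cdot\nabla_v\mu^\gamma$. Repeating the above on $|\nabla_x\mu^\gamma|^p$ reproduces the same cancellations and the same dissipation structure (now for the gradient analogue $g := \nabla_x\mu^\gamma\,e^H$), while the forcing contributes, after a further integration by parts in $v$ that moves $\nabla_v$ off of $\mu^\gamma$, a term controlled by $C\gamma\,\|\partial_{x_j}\sfF\|_{L^\infty}\sqrt{Y_1 D_1}$ where $Y_1,D_1$ are the $k=1$ analogues of $Y,D$. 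Assumption $(\textbf{A}_\Phi^1)$ together with the hypothesis $\|\nabla K\star\rho\|_{W^{1,\infty}}\le c_{K,p}(1+\|\mu\|_{W^{1,p}_{x,H}})$ gives $\|\partial_{x_j}\sfF\|_{L^\infty} \le C(1+\|\mu^\gamma\|_{W^{1,p}_{x,H}})$, so the same Young-absorption closes the estimate into a Grönwall inequality $Z' \le C(1+Z^{1/p})^2 Z$ for $Z(t) := \|\mu_t^\gamma\|_{W^{1,p}_{x,H}}^p$, producing a (possibly smaller) $\gamma$-independent $T_p$ and bound $M_1$. The final statement on $\|\nabla K\star\rho_t^\gamma\|_{W^{k,\infty}}$ is then immediate by inserting the bound on $\mu_t^\gamma$ back into the hypothesis.

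The main obstacle is precisely the $\gamma$-versus-$\gamma^2$ scaling just described. A naive Cauchy–Schwarz on the interaction term produces a factor $\gamma\,\|\nabla K\star\rho\|_{L^\infty}\int h^p|v|\,e^{-H}$ that, without the $v$-integration by parts, can only be handled by Grönwall to give a useless constant of order $e^{C\gamma t}$. The right move is to always integrate by parts in $v$ first—turning each stray factor of $v$ into a factor of $\nabla_v$ that pairs with the dissipation—and then to use Young's inequality with a weight proportional to $1/\gamma$ so that the dissipation coefficient matches $\gamma^2 p(p-1)$ and $\gamma$ drops out of the remaining term entirely; the hypothesis on $\|\nabla K\star\rho\|_{W^{k,\infty}}$ then merely closes the constant in terms of $\|\mu^\gamma\|_{W^{k,p}_{x,H}}$ so that a standard ODE comparison argument yields the $\gamma$-independent time $T_p$.
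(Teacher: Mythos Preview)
Your approach is essentially that of the paper, which packages the same computation into an abstract energy lemma (Lemma~\ref{lem:A1}) for the linear equation with a forcing $\gamma\nabla_v\cdot g$ and then specializes to $g=(\nabla K\star\rho)\mu$ for $k=0$ and $g=(\nabla K\star\rho)\,\partial_{x_k}\mu-(\partial_{x_k}\sfF)\,\mu$ for $k=1$; the cancellation of the transport and $\nabla\Phi$ contributions, the dissipation $-\gamma^2 p(p-1)D$, and the Young absorption with weight $1/\gamma$ are identical to yours. One small slip: after your integration by parts and Cauchy--Schwarz, the $k=1$ forcing term is bounded by $C\gamma\|\partial_{x_j}\sfF\|_{L^\infty}\sqrt{D_1}\bigl(\int |g_j|^{p-2}h^2\,d\eta\bigr)^{1/2}$, which by H\"older is at most $C\gamma\|\partial_{x_j}\sfF\|_{L^\infty}\sqrt{D_1}\,Y_1^{(p-2)/(2p)}Y_0^{1/p}$ and hence couples to the zeroth-order quantity $Y_0=\|\mu^\gamma\|_{L_H^p}^p$ rather than to $Y_1$ alone---but since you close the Gr\"onwall on $Z=\|\mu^\gamma\|_{W^{1,p}_{x,H}}^p$ this is harmless, and the paper proceeds the same way by feeding its $k=0$ bound into the $k=1$ estimate.
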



To establish Proposition~\ref{lem:regularity-summary}, we first define 
\[
	\eta(dxdv):=\sigma_{\Phi}(dx)\scrN^d(dv)\,,\qquad \sigma_\Phi(dx) := e^{-\Phi(x)}\leb^d(dx)\,.
\]
Here, $\scrN^d$ represents the standard normal distribution on $\R^d$. We note that considering $\eta$ is {\em natural} from the physical point of view since the free Hamiltonian for the rescaled kinetic equation \eqref{eq:kinetic-rescaled} is given by
\[
	H(x,v) = \Phi(x) + \frac{1}{2}|v|^2\,,
\]
and $\eta$ is equal to $e^{-H}\leb^{2d}$ up to some constant.

\begin{lemma}\label{lem:A1}
	Let $T>0$ and $f$ be a solution to the following equation on the time interval $[0,T]$ with sufficient regularity and integrability:
	\begin{align*}
		\partial_t f_t + \gamma\, v\cdot\nabla_x f_t - \gamma \nabla\Phi\cdot \nabla_v f_t = \gamma^2 \nabla_v\cdot\bigl(\nabla_v f_t + v\,f_t\bigr) + \gamma \nabla_v\cdot g_t\,,
	\end{align*}
	for some $g\in L^p((0,T);L_H^p(\R^d \times \R^d))$. Further, set $w := df/d\eta = fe^H$ and $\zeta=dg/d\eta = ge^H$. 
	
	Then for any $p\in[2,\infty)$,
	\[
		\|w_t\|_{L^p(\eta)}^p  \le  \|w_0\|_{L^p(\eta)}^p  + \frac{p(p-1)}{2}\int_0^t\intrr |w_r|^{p-2}|\zeta_r|^2\, d\eta\, dr\qquad\text{for all $t \in [0,T]$\,.}
	\]
\end{lemma}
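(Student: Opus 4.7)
The strategy is to pass to the weighted unknown $w = fe^H$ (equivalently $w = df/d\eta$), so that the analysis reduces to a plain $L^p$ estimate with respect to the \emph{fixed} reference measure $\eta$. Using $\nabla_x e^{-H} = -\nabla\Phi\, e^{-H}$ and $\nabla_v e^{-H} = -v\, e^{-H}$, a short computation shows that the would-be zero-order contributions $\pm\gamma w\, v\cdot\nabla\Phi$ cancel, and $w$ satisfies
\[
\partial_t w + \gamma\, v\cdot\nabla_x w - \gamma\nabla\Phi\cdot\nabla_v w = \gamma^2\bigl(\Delta_v w - v\cdot\nabla_v w\bigr) + \gamma\bigl(\nabla_v\cdot\zeta - v\cdot\zeta\bigr),
\]
with $\zeta = ge^H$. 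The crucial structural observation is that, with respect to $\eta$, the Hamiltonian transport $v\cdot\nabla_x - \nabla\Phi\cdot\nabla_v$ is antisymmetric, while $\Delta_v - v\cdot\nabla_v$ is the self-adjoint nonpositive Ornstein--Uhlenbeck generator whose invariant measure carries the Gaussian factor of $\eta$.

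With this in hand, I would multiply the $w$-equation by $p|w|^{p-2}w$, use the chain rule to rewrite the linear transport as derivatives of $|w|^p$, and integrate against $d\eta$. Two integrations by parts---one in $x$ against $e^{-\Phi}$ and one in $v$ against the Gaussian---produce equal and opposite contributions $\pm\gamma\intrr |w|^p\,v\cdot\nabla\Phi\, d\eta$ from the transport, which cancel. A further $v$-integration by parts for the Ornstein--Uhlenbeck term yields the coercive gain
\[
-p(p-1)\gamma^2\intrr |w|^{p-2}|\nabla_v w|^2\, d\eta,
\]
while rewriting $\nabla_v\cdot\zeta - v\cdot\zeta = e^{|v|^2/2}\nabla_v\cdot(\zeta e^{-|v|^2/2})$ and integrating by parts once in $v$ converts the forcing into $-p(p-1)\gamma\intrr |w|^{p-2}\nabla_v w\cdot\zeta\, d\eta$.

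The last step is to apply Young's inequality $\gamma|\nabla_v w\cdot\zeta|\le\tfrac{\gamma^2}{2}|\nabla_v w|^2 + \tfrac{1}{2}|\zeta|^2$ to absorb exactly half of the coercive term, giving
\[
\frac{d}{dt}\intrr |w|^p\, d\eta \le -\tfrac{p(p-1)\gamma^2}{2}\intrr |w|^{p-2}|\nabla_v w|^2\, d\eta + \tfrac{p(p-1)}{2}\intrr |w|^{p-2}|\zeta|^2\, d\eta;
\]
dropping the remaining nonpositive dissipation and integrating in time from $0$ to $t$ yields the claim. The main technical point is justifying the chain rule for $|w|^p$ (which is only $C^1$ in $w$ for $p\in[2,3)$) and the vanishing of boundary contributions in the integrations by parts. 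Both are harmless under the posited ``sufficient regularity and integrability'', and can be handled routinely by first regularizing the power via $(w^2+\varepsilon)^{p/2}$ and using the $x,v$-decay of $f$ encoded in $L^p_H$-integrability.
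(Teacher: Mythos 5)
Your proof is correct and follows essentially the same route as the paper's: the cancellation of the $v\cdot\nabla\Phi$ contributions from the Hamiltonian transport, the coercive Ornstein--Uhlenbeck term $-\gamma^2 p(p-1)\intrr |w|^{p-2}|\nabla_v w|^2\,d\eta$, and Young's inequality absorbing exactly half of it are precisely the steps in the paper, which merely performs the computation on $f$ and converts via $\nabla f\,d\leb^{2d}=\nabla w\,d\eta - w\nabla H\,d\eta$ rather than first writing the PDE for $w$. Your remarks on regularizing $|w|^p$ and justifying the integrations by parts are a reasonable supplement to the paper's ``sufficient regularity'' assumption.
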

\begin{proof}
	From the equation for $f$, we find
	$$\begin{aligned}
&\frac{d}{dt}\intrr |w_t|^p\,d\eta \\
&\qquad= -\gamma p\intrr |w_t|^{p-2} w_t \lt( v \cdot \nabla_x \mu_t\rt)d\leb^{2d} +\gamma p\intrr |w_t|^{p-2} w_t \lt(\nabla \Phi \cdot \nabla_v \mu_t \rt) d\leb^{2d}\cr
&\qquad\qquad +\gamma^2 p \intrr |w|^{p-2} w_t\, \nabla_v\cdot\bigl( \nabla_v \mu_t + v\mu_t\bigr)\, d\leb^{2d} + \gamma p \intrr |w_t|^{p-2} w \lt(\nabla_v\cdot g_t\rt) d\leb^{2d} \cr
&\qquad=: \mathrm{(I)} + \mathrm{(II)} + \mathrm{(III)} + \mathrm{(IV)}\,.
\end{aligned}$$
Note that
\bq\label{rel_q}
\nabla_x f\,d\leb^{2d} = \nabla_x w \,d\eta - (\nabla \Phi) \,w \,d\eta \quad \mbox{and} \quad \nabla_v f\,d\leb^{2d} = \nabla_v w \,d\eta - v\, w \,d\eta\,.
\eq
We use this relation to rewrite $\mathrm{(I)}$ as
\begin{align*}
	\mathrm{(I)} &= -\gamma p \intrr |w_t|^{p-2}w_t \lt(v \cdot \nabla_x w\rt)d\eta + \gamma p\intrr |w_t|^p \lt(v \cdot \nabla\Phi\rt)d\eta \\
	&= \gamma (p-1)\intrr |w_t|^p \lt(v \cdot \nabla \Phi\rt)d\eta\,,
\end{align*}
where, in the last equality, we used the fact that
\[
	\intrr |w_t|^{p-2}w \lt(v \cdot \nabla_x w_t\rt) d\eta = \frac{1}{p}\intrr (\nabla_x|w_t|^{p})\cdot v\,d\eta = \frac{1}{p} \intrr |w_t|^p\lt( v\cdot \nabla\Phi\rt) d\eta\,.
\]
In a similar fashion, we use \eqref{rel_q} to rewrite $\mathrm{(II)}$ as
\[
	\mathrm{(II)} = -\gamma(p-1)\intrr |w_t|^p\lt(v\cdot\nabla\Phi\rt) d\eta\,,
\]
which, together with $\mathrm{(I)}$ implies $\mathrm{(I)} + \mathrm{(II)} = 0$. As for $\mathrm{(III)}$, we easily find
\begin{align*}
	\mathrm{(III)} &= -\gamma^2p(p-1)\intrr |w_t|^{p-2} \nabla_v w_t \cdot \lt( \nabla_v \mu_t + v \mu_t \rt) d\leb^{2d} \\
	&= -\gamma^2p(p-1)\intrr |w_t|^{p-2} |\nabla_v w_t|^2 \,d\eta\,.
\end{align*}
Finally, to estimate $\mathrm{(IV)}$, we apply the Young and H\"older inequalities to obtain
\begin{align*}
	\mathrm{(IV)} &= \gamma p(p-1) \intrr |w_t|^{p-2} \nabla_v w_t \cdot \zeta_t\, d\eta \\
	&\le \gamma p(p-1) \intrr |w_t|^{p-2} \lt(\frac{\gamma}{2}|\nabla_v w_t|^2 + \frac{1}{2\gamma}|\zeta_t|^2\rt) d\eta \\
	& \le \frac{\gamma^2 p(p-1)}{2}\intrr |w_t|^{p-2}|\nabla_v w_t|^2\,d\eta + \frac{p(p-1)}{2}\intrr |w_t|^{p-2} |\zeta_t|^2\,d\eta\,. 
\end{align*}
Putting the terms $\mathrm{(I)}$--$\mathrm{(IV)}$ together and using the identity
\[
	|\nabla_v |w_t|^{p/2}|^2 = (p^2/4) |w_t|^{2(p/2-1)}|\nabla w_t|^2 = (p^2/4)|w_t|^{p-2}|\nabla_v w_t|^2\,,
\]
we then obtain the required estimate.
\end{proof}

In the following, we assume that $\mu_t^\gamma\ll \eta$ for all $\gamma>0$ and $t\ge 0$.

\begin{lemma}\label{lem_u} Let $\mu^\gamma$ be a solution of \eqref{eq:kinetic-rescaled} with sufficient regularity. We  assume that
\[
	c_0:= \sup_{\gamma>0}\sup_{t\in[0,T]}\|\nabla K \star \rho_t^\gamma \|_{L^\infty}<\infty\,.
\]
Then for any $p\in[2,\infty)$, we have for $u_t^\gamma := d\mu_t^\gamma/d\eta = \mu_t^\gamma e^H$ (the density of $\mu_t^\gamma$ w.r.t.\ $\eta$),
\[
\|u_t^\gamma\|_{L^p(\eta)}^p  \le \|u_0^\gamma\|_{L^p(\eta)}^p + \frac{p(p-1)c_0^2}{2}\int_0^t\|u_r^\gamma\|_{L^p(\eta)}^p \,dr\,,
\]
for all $t\in[0,T]$.
%
In particular, this yields
\[
	\|\mu_t^\gamma\|_{L_H^p} \leq \|\mu_0^\gamma\|_{L_H^p}\,e^{c_0^2(p-1)t/2}\qquad\text{for all $t\in[0,T]$}\,.
\]
\end{lemma}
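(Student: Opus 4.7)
The plan is to apply Lemma~\ref{lem:A1} directly, after rewriting \eqref{eq:kinetic-rescaled} so that the interaction term appears as $\gamma\nabla_v\cdot g_t$ for a suitable $g_t$. Since $\sfF(x,\rho_t^\gamma)=-\nabla\Phi(x)-(\nabla K\star\rho_t^\gamma)(x)$, expanding the divergence and moving the interaction contribution to the right-hand side gives
\[
\partial_t\mu_t^\gamma + \gamma\, v\cdot\nabla_x\mu_t^\gamma - \gamma\nabla\Phi\cdot\nabla_v\mu_t^\gamma = \gamma^2\nabla_v\cdot\bigl(\nabla_v\mu_t^\gamma + v\mu_t^\gamma\bigr) + \gamma\,\nabla_v\cdot g_t^\gamma,
\]
where $g_t^\gamma := (\nabla K\star\rho_t^\gamma)\,\mu_t^\gamma$. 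Note that since $\nabla K\star\rho_t^\gamma$ is independent of $v$, we indeed have $\nabla_v\cdot g_t^\gamma = (\nabla K\star\rho_t^\gamma)\cdot\nabla_v\mu_t^\gamma$, consistent with the original equation.

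Next, I would read off the density $\zeta_t^\gamma := dg_t^\gamma/d\eta = g_t^\gamma e^H = (\nabla K\star\rho_t^\gamma)\,u_t^\gamma$. The standing assumption $\|\nabla K\star\rho_t^\gamma\|_{L^\infty}\le c_0$ immediately yields the pointwise bound $|\zeta_t^\gamma|\le c_0\,|u_t^\gamma|$. Invoking Lemma~\ref{lem:A1} with this choice of $g_t^\gamma$ and $\zeta_t^\gamma$ and using $|u_r^\gamma|^{p-2}|\zeta_r^\gamma|^2\le c_0^2\,|u_r^\gamma|^p$ then gives
\[
\|u_t^\gamma\|_{L^p(\eta)}^p \le \|u_0^\gamma\|_{L^p(\eta)}^p + \frac{p(p-1)c_0^2}{2}\int_0^t \|u_r^\gamma\|_{L^p(\eta)}^p\,dr,
\]
which is the first claimed inequality. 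An application of Gr\"onwall's lemma then produces $\|u_t^\gamma\|_{L^p(\eta)}^p\le \|u_0^\gamma\|_{L^p(\eta)}^p\,e^{p(p-1)c_0^2 t/2}$, and taking $p$-th roots yields $\|u_t^\gamma\|_{L^p(\eta)}\le\|u_0^\gamma\|_{L^p(\eta)}\,e^{(p-1)c_0^2 t/2}$.

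Finally, I would translate this back to $\mu_t^\gamma$ in the $L^p_H$ norm. Since $\eta = (2\pi)^{-d/2}e^{-H}\leb^{2d}$ and $u=\mu\,e^H$, a direct computation gives
\[
\|u\|_{L^p(\eta)}^p = (2\pi)^{-d/2}\intrr \mu^p e^{(p-1)H}\,d\leb^{2d} = (2\pi)^{-d/2}\|\mu\|_{L^p_H}^p,
\]
so the prefactors cancel when substituting, and the bound $\|\mu_t^\gamma\|_{L_H^p}\le\|\mu_0^\gamma\|_{L_H^p}\,e^{c_0^2(p-1)t/2}$ follows. There is no substantive obstacle here: Lemma~\ref{lem:A1} has done all the work of absorbing the degenerate hypocoercive terms (the transport and drift cancellation in parts $(\mathrm{I})$--$(\mathrm{II})$ and the absorbing diffusion in $(\mathrm{III})$), and the only new ingredient is the elementary $L^\infty$ bound on $\nabla K\star\rho_t^\gamma$. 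The mildest care needed is just to verify that the interaction term fits the template of Lemma~\ref{lem:A1} (which requires $v$-independence of $\nabla K\star\rho_t^\gamma$) and that the passage from $\eta$-densities to the $L^p_H$ norm is exact up to the normalizing constant.
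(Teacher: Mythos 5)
Your proposal is correct and follows essentially the same route as the paper: both apply Lemma~\ref{lem:A1} with $g_t^\gamma=(\nabla K\star\rho_t^\gamma)\,\mu_t^\gamma$, use the pointwise bound $|\zeta_t^\gamma|\le c_0|u_t^\gamma|$ to close the integral inequality, and conclude by Gr\"onwall. Your extra remark on the $(2\pi)^{-d/2}$ normalization cancelling between $\|u\|_{L^p(\eta)}$ and $\|\mu\|_{L^p_H}$ is a welcome (and correct) tidying of a point the paper glosses over.
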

\begin{proof} The proof makes use of Lemma~\ref{lem:A1} with
\[
	g_t(x,v) := (\nabla K\star \rho_t^\gamma)(x)\, \mu_t^\gamma(x,v)\,.
\]
Observing that
\[
\zeta^\gamma_t(x,v) 
 = g^\gamma_t(x,v)e^{H(x,v)} = (\nabla K\star \rho_t^\gamma)(x)\, \mu_t^\gamma(x,v)e^{H(x,v)} =  (\nabla K\star \rho_t^\gamma)(x)\, u^\gamma_t(x,v)\,,
\]
and
\[
	\intrr |u_t^\gamma|^{p-2}|\zeta_t|^2\,d\eta \le c_0^2 \intrr |u_t^\gamma|^p\,d\eta\qquad\text{for almost all $t\in(0,T)$}\,,
\]
we deduce from Lemma~\ref{lem:A1},
\[
	\frac{d}{dt} \|u_t^\gamma\|_{L^p(\eta)}^p + \frac{2\gamma^2 (p-1)}{p}\lt\|\nabla_v|u_t^\gamma|^{p/2}\rt\|_{L^2(\eta)}^2 \le  \frac{p(p-1)c_0^2}{2}\|u_t^\gamma\|_{L^p(\eta)}^p\,,
\]
for almost every $t\in(0,T)$. We then conclude the proof by applying Gr\"onwall's inequality.
\end{proof}

Following similar computations as in Lemma~\ref{lem_u}, we obtain the next statement.

\begin{lemma}\label{Lp-bounds-singular}
Let $\mu^\gamma$ be a solution of \eqref{eq:kinetic-rescaled} with sufficient regularity. Further, let $p\in[2,\infty)$ such that
\[
	\|\nabla K\star \varrho \|_{L^\infty} \le c_{K,p}\lt(1 + \|\nu\|_{L_H^p}\rt),\qquad \varrho = \pi^x_\#\nu\,,\qquad \text{$\nu\in L_H^p(\R^d\times\R^d)\,.$}
\]
Then there exists $t_{p}^*\in(0,\infty)$ such that
\[
	M := \sup_{\gamma>0}\sup\nolimits_{t\in[0,t_p^*]} \|\mu_t^\gamma\|_{L_H^{p}} <\infty\,.
\]
In particular, for any $q\in[2,\infty)$, we have
\[
	\|\mu_t^\gamma\|_{L_H^q} \leq \|\mu_0^\gamma\|_{L_H^q}\,e^{c_{K,p}^2(1+M^2)(q-1)t}\qquad\text{for all $t\in[0,t_p^*]$}\,.
\]
Moreover, we have
\[
	\|\rho_t^\gamma\|_{L^q}  \leq C\qquad\text{for all $t\in[0,t_p^*]$}\,,
\]
where $C>0$ is independent of $\gamma>0$.
\end{lemma}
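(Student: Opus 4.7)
The plan is to bootstrap Lemma~\ref{lem:A1} with the natural choice $g_t = (\nabla K \star \rho_t^\gamma)\,\mu_t^\gamma$, so that $\zeta_t = (\nabla K \star \rho_t^\gamma)\,u_t^\gamma$, and then to exploit the assumed quasi-linear growth of $\|\nabla K \star \rho\|_{L^\infty}$ in $\|\mu\|_{L_H^p}$. A pointwise bound followed by the standing hypothesis gives
\begin{align*}
	\iint |u_t^\gamma|^{p-2}|\zeta_t|^2\,d\eta \,\le\, \|\nabla K\star\rho_t^\gamma\|_{L^\infty}^2\,\|u_t^\gamma\|_{L^p(\eta)}^p \,\le\, c_{K,p}^2\bigl(1 + \|\mu_t^\gamma\|_{L_H^p}\bigr)^2 \|\mu_t^\gamma\|_{L_H^p}^p\,.
\end{align*}
Setting $Y_\gamma(t) := \|\mu_t^\gamma\|_{L_H^p}$, Lemma~\ref{lem:A1} then produces the $\gamma$-independent nonlinear integral inequality
\begin{align*}
	Y_\gamma(t)^p \,\le\, Y_\gamma(0)^p + \tfrac{1}{2}p(p-1)c_{K,p}^2\int_0^t \bigl(1+Y_\gamma(r)\bigr)^2 Y_\gamma(r)^p\,dr\,.
\end{align*}

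Under the standing assumption $\sup_\gamma Y_\gamma(0) < \infty$ coming from the setup of Theorem~\ref{thm:well-posedness}, the next step is to extract a uniform bound $M$ and an existence time $t_p^* > 0$ via a continuation argument. Concretely, as long as $Y_\gamma(t) \le M_0 := 2\sup_\gamma Y_\gamma(0) + 2$, the integrand is controlled by $(1+M_0)^2 Y_\gamma^p$, so Gr\"onwall yields $Y_\gamma(t) \le Y_\gamma(0)\exp\!\bigl(\tfrac12(p-1)c_{K,p}^2(1+M_0)^2 t\bigr)$; choosing $t_p^*$ small enough that this upper bound stays below $M_0$ closes the bootstrap and produces the desired uniform-in-$\gamma$ constant $M := M_0$.

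With $M$ in hand, the hypothesis yields the uniform bound $\|\nabla K\star\rho_t^\gamma\|_{L^\infty} \le c_{K,p}(1+M)$ on $[0,t_p^*]$, which plays the role of the constant $c_0$ in Lemma~\ref{lem_u}. Applying that lemma at an arbitrary exponent $q \in [2,\infty)$ delivers the stated $L_H^q$-estimate, after invoking $(1+M)^2 \le 2(1+M^2)$ to match the displayed constant. The $L^q$-bound on $\rho_t^\gamma$ is then a direct consequence of the duality argument from the $k=0$ case of Lemma~\ref{lem:lp-rho-u}: since $\Phi \ge 0$ gives $e^{-\Phi} \le 1$, H\"older against the weight $e^{-H/q'}$ produces $\|\rho_t^\gamma\|_{L^q} \lesssim \|\mu_t^\gamma\|_{L_H^q}$ with a constant depending only on $d$ and $q$. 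The main delicate point will be the super-linearity in the first step, which forces $t_p^*$ to be small but, crucially, independent of $\gamma$; this $\gamma$-independence is ensured by the fact that the dissipation term $\gamma^2\|\nabla_v|u_t^\gamma|^{p/2}\|_{L^2(\eta)}^2$ in Lemma~\ref{lem:A1} contributes with a good sign and is simply discarded, leaving only constants that do not see $\gamma$.
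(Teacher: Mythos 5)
Your proposal is correct and follows essentially the same route as the paper: the same choice $g_t=(\nabla K\star\rho_t^\gamma)\mu_t^\gamma$ in Lemma~\ref{lem:A1}, the resulting superlinear differential inequality for $\|\mu_t^\gamma\|_{L_H^p}$, and then Lemmas~\ref{lem_u} and \ref{lem:lp-rho-u} for the $L_H^q$ and $L^q$ conclusions. The only (immaterial) difference is that the paper closes the nonlinear step by solving the Bernoulli-type ODE explicitly and choosing $t_p^*$ before the blow-up time, whereas you run a continuation/bootstrap argument; both yield a $\gamma$-independent $t_p^*$ for the same reason, namely that the inequality's coefficients depend only on $c_{K,p}$, $p$, and $\sup_\gamma\|\mu_0^\gamma\|_{L_H^p}$.
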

\begin{proof}
	As in the proof of Lemma~\ref{lem_u}, we apply Lemma~\ref{lem:A1} with
	\[
		g_t(x,v) := (\nabla K\star \rho_t^\gamma)(x)\, \mu_t^\gamma(x,v)\,.
	\]
	From the assumption on $K$, we have that
	\[
		|g_t|(x,v) \le c_{K,p}\bigl(1 + \|\mu_t^\gamma\|_{L_H^{p}}\bigr)|\mu_t^\gamma|(x,v)\,.
	\]
	Therefore, an application of Lemma~\ref{lem:A1} provides the differential inequality
	\[
		\frac{d}{dt}\|\mu_t^\gamma\|_{L_H^{p}} \le (p-1)c_{K,p}^2\Bigl(1 + \|\mu_t^\gamma\|^2_{L_H^{p}}\Bigr)\|\mu_t^\gamma\|_{L_H^{p}}\,,
	\]
	which consequently leads to the estimate
	\[
		\|\mu^\gamma_t\|_{L_H^{p}} \le e^{(p-1)c_{K,p}^2 t}\left(\frac{1}{\|\mu_0^\gamma\|_{L_H^{p}}^2} - \frac{e^{2(p-1)c_{K,p}^2 t}-1}{(p-1)c_{K,p}^2}\right)^{-1}.
	\]
	Hence, for any
	\[
		t_p^* < \frac{1}{2(p-1)c_{K,p}^2}\log\left(1 + \frac{(p-1)c_{K,p}^2}{\|\mu_0^\gamma\|_{L_H^{p}}^2}\right),
	\]
	we then obtain the required estimate.
	
	A direct application of Lemmas \ref{lem_u} and \ref{lem:lp-rho-u} yields the other estimates for $q\in[2,\infty)$, $q\ne p$.
\end{proof}

\begin{remark}\label{rem:singular-K}
Let $q\in(1,\infty)$ and $p= \max\{2,q/(q-1)\}$. If $R > 0$ exists such that
\[
	c_{K} := \|\nabla K\|_{L^q(B_R)} + \|\nabla K\|_{L^\infty(\R^d\setminus B_R)}<\infty\,,
\]
then for all $x\in\R^d$, we obtain
\begin{align*}
	(\nabla K\star\varrho)(x) &= \intr (\nabla K)(x-y)\,\varrho(dy) \le \int_{B_R(x)} |\nabla K|(x-y)\,\varrho(dy) + \|\nabla K\|_{L^\infty(\R^d\setminus B_R)}\,.
\end{align*}
The first term on the right-hand side may be further estimated by
\[
	\int_{B_R(x)} |\nabla K|(x-y)\,\varrho(dy) \le \begin{cases}
		\|\nabla K\|_{L^q(B_R)}\|\varrho\|_{L^{q/(q-1)}} &\text{for $q\in(1,2]$} \\
		c_{q,2,R}\|\nabla K\|_{L^q(B_R)}\|\varrho\|_{L^2} &\text{for $q\in(2,\infty)$}
	\end{cases}\,,
\]
where $c_{q,2,R}>0$ is a constant obtained from the standard $(L^q,L^2)$-interpolation inequality on the bounded domain $B_R$. Altogether, this gives
\[
	\|\nabla K\star\rho\|_{L^\infty} \le c_{K,q}\lt(1 + \|\nu\|_{L_H^{p}}\rt)
\]
for some constant $c_{K,q}>0$, i.e.\ the assumption on $K$ in Lemma~\ref{Lp-bounds-singular} is satisfied. Therefore, this allows us to consider the case where $\nabla K$ is singular at the origin.
\end{remark}

Next, we provide an estimate for the spatial derivative of $\mu_t^\gamma$.
 
\begin{lemma}\label{lem_xi} Let $\mu^\gamma$ be a solution of \eqref{eq:kinetic-rescaled} with sufficient regularity. We assume that 
\[
	c_0:= \sup_{t\in[0,T]}\|\nabla K \star \rho_t^\gamma \|_{L^\infty}^2\,,\quad c_1 := \sup_{t\in[0,T]}\|\sfF(\cdot,\rho_t^\gamma)\|^2_{\Lip} < \infty\qquad\text{uniformly in $\gamma>0$\,,}
\]
and denote $\xi_t^\gamma:= (\nabla_x \mu_t^\gamma)\,e^{H}$ for all $t\in[0,T]$. Then for any $p\in[2,\infty)$, we have 
$$\begin{aligned}
\intrr |\xi_t^\gamma|^p\,d\eta &\leq  \intrr |\xi_0^\gamma|^p\,d\eta + (p-1)\lt( c_1(p-2) + c_0 p\rt) \int_0^t\intrr |\xi_s^\gamma|^p \,d\eta\, ds \cr
&\quad + 2c_1(p-1)\int_0^t \|\mu_s^\gamma\|_{L_H^p}^p\,ds
\end{aligned}$$
for all $t \in [0,T]$. In particular, this implies
\[
	\sup\nolimits_{t\in[0,T]}\|\nabla_x \mu_t^\gamma\|_{L_H^p} \leq C\bigl(\|\nabla_x \mu_0^\gamma\|_{L_H^p} + \|\mu_0^\gamma\|_{L_H^p}\bigr)\,,
\]
where $C>0$ depends on $c_0, c_1, p$, and $T$, but independent of $\gamma>0$.
\end{lemma}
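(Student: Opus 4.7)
The plan is to differentiate the kinetic equation in the $x$-variable and then reduce the resulting evolution for $\nabla_x\mu_t^\gamma$ to Lemma~\ref{lem:A1}. First, I would rewrite \eqref{eq:kinetic-rescaled} in the form required by Lemma~\ref{lem:A1}, namely
\[
\partial_t\mu^\gamma + \gamma v\cdot\nabla_x\mu^\gamma - \gamma\nabla\Phi\cdot\nabla_v\mu^\gamma = \gamma^2\nabla_v\cdot(\nabla_v\mu^\gamma + v\mu^\gamma) + \gamma\nabla_v\cdot\bigl(\mu^\gamma(\nabla K\star\rho^\gamma)\bigr),
\]
absorbing the nonlocal part of $\sfF$ into the source since $\nabla K\star\rho^\gamma$ is $v$-independent. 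Differentiating in $x_i$ produces the commutator $-\gamma(\partial_{x_i}\nabla\Phi)\cdot\nabla_v\mu^\gamma$ on the LHS, which is itself a velocity divergence and can be moved to the RHS and merged with the $x_i$-derivative of the original source. This shows that each $\partial_{x_i}\mu_t^\gamma$ satisfies another equation of Lemma~\ref{lem:A1} type with source
\[
\tilde g_i = (\nabla K\star\rho_t^\gamma)\,\partial_{x_i}\mu_t^\gamma - \mu_t^\gamma\,\partial_{x_i}\sfF(\cdot,\rho_t^\gamma),
\]
where I used $\partial_{x_i}\sfF = -\partial_{x_i}\nabla\Phi - \partial_{x_i}\nabla K\star\rho^\gamma$.

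Next, I apply Lemma~\ref{lem:A1} component-wise to the scalar equation satisfied by each $\partial_{x_i}\mu^\gamma$ (the proof of Lemma~\ref{lem:A1} goes through verbatim for the vector-valued $\nabla_x\mu^\gamma$ using the Euclidean norm on the $x$-index), obtaining
\[
\|\xi_t^\gamma\|_{L^p(\eta)}^p \le \|\xi_0^\gamma\|_{L^p(\eta)}^p + \frac{p(p-1)}{2}\int_0^t\intrr |\xi_s^\gamma|^{p-2}\,|\tilde g_s\,e^H|^2\,d\eta\,ds.
\]
The hypotheses on $c_0, c_1$ yield $|\tilde g_s\,e^H|^2 \le 2c_0|\xi_s^\gamma|^2 + 2c_1|u_s^\gamma|^2$ with $u^\gamma=\mu^\gamma e^H$ as in Lemma~\ref{lem_u}, and the mixed term is controlled via Young's inequality $|\xi|^{p-2}|u|^2 \le \tfrac{p-2}{p}|\xi|^p + \tfrac{2}{p}|u|^p$. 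Collecting coefficients produces precisely the stated integral inequality. The second part then follows by Gr\"onwall's lemma together with Lemma~\ref{lem_u}, which controls $\int_0^t\|\mu_s^\gamma\|_{L_H^p}^p\,ds$ by $\|\mu_0^\gamma\|_{L_H^p}^p$ times a constant depending only on $c_0$, $p$, $T$, hence uniformly in $\gamma$.

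The main obstacle is the bookkeeping in the first step: the external potential $\Phi$ sits inside the Hamiltonian drift, so differentiating in $x_i$ generates the commutator $(\partial_{x_i}\nabla\Phi)\cdot\nabla_v\mu^\gamma$, and one must recognize this (thanks to $v$-independence of $\partial_{x_i}\nabla\Phi$) as a velocity divergence of $(\partial_{x_i}\nabla\Phi)\mu^\gamma$, so that it can be placed alongside the nonlocal contribution $\mu^\gamma\,\partial_{x_i}\nabla K\star\rho^\gamma$ into a single source of the form assumed by Lemma~\ref{lem:A1}. Once this reduction is in place, the rest is a routine repetition of the $L^p_H$ argument of Lemma~\ref{lem_u}, the only genuine novelty being the extra $\mu^\gamma\partial_{x_i}\sfF$ contribution that, after Young's inequality, produces the second forcing term $2c_1(p-1)\|\mu^\gamma\|_{L^p_H}^p$ appearing in the statement.
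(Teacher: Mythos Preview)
Your proposal is correct and follows essentially the same route as the paper: differentiate \eqref{eq:kinetic-rescaled} in $x_k$, recognize that the commutator $-\gamma(\partial_{x_k}\nabla\Phi)\cdot\nabla_v\mu^\gamma$ combines with the derivative of the nonlocal term into a single velocity-divergence source $g_t^k=(\nabla K\star\rho_t^\gamma)\,\partial_{x_k}\mu_t^\gamma-(\partial_{x_k}\sfF)\,\mu_t^\gamma$, apply Lemma~\ref{lem:A1}, bound $|g_t^k e^H|^2\le 2c_0|\xi_t^\gamma|^2+2c_1|u_t^\gamma|^2$, split the mixed term by Young's inequality, and close with Gr\"onwall together with Lemma~\ref{lem_u}. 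The only cosmetic difference is that the paper works component-by-component in $k$ whereas you invoke a vector-valued version of Lemma~\ref{lem:A1}; both lead to the stated constants.
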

\begin{proof} 
For notational simplicity, we drop superscript $\gamma$ throughout this proof.

\smallskip

Set $q^k := \pa_{x_k}\mu$ for $k=1,\dots,d$ and consider $\xi^k:=q^k e^H$. Recall that $\eta = e^{-H}\leb^{2d}$. We notice from \eqref{eq:kinetic-rescaled} that $q^k$ satisfies
\[
\partial_t q_t^k + \gamma v\cdot\nabla_x q_t^k - \gamma\nabla\Phi\cdot \nabla_v q_t^k = \gamma^2\nabla_v\cdot\lt( \nabla_v q_t^k + v q_t^k\rt) + \gamma\nabla_v\cdot g_t^k\,,
\]
with
\[
	g_t^k(x,v) = (\nabla K\star\rho_t)(x)\,q_t^k(x,v) - \partial_{x_k}\sfF(x,\rho_t)\,\mu_t(x,v).
\]
By the assumptions placed on $\nabla K$ and $\partial_{x_k}\sf F$, we deduce that
\[
	|g_t^k|(x,v) \le c_0|q_t^k|(x,v) + c_1|\mu_t|(x,v)\,,
\]
i.e.\ $g^k\in L^p((0,T); L_H^p(\R^d \times \R^d))$ for $k=1,\ldots,d$. Therefore, an application of Lemma~\ref{lem:A1} gives
\[
	\frac{d}{dt} \|\xi_t^k\|_{L^p(\eta)}^p + \frac{2\gamma^2 (p-1)}{p}\lt\|\nabla_v|\xi_t^k|^{p/2}\rt\|_{L^2(\eta)}^2 \le  c_2(p)\|\xi_t^k\|_{L^p(\eta)}^p + 2c_1^2(p-1)\|\mu_t\|_{L_H^p}^p\,,
\]
with $c_2(p) := (p-1)\lt(c_0^2 p + c_1^2(p-2)\rt) > 0$.
By Lemma~\ref{lem_u}, the second term on the right-hand side of the previous inequality can be bounded from above by
\[
	2c_1^2(p-1)\|\mu_0\|_{L_H^p}^p\,e^{c_0^2p(p-1)t/2}\,.
\]
This, together with applying Gr\"onwall's lemma yields
\[
	\|q_t^k\|_{L_H^p}^p \leq \|q_0^k\|_{L_H^p}^p e^{c_2(p)t} + 2c_1^2(p-1)\|\mu_0\|_{L_H^p}^p \int_0^t e^{c_0^2p(p-1)s/2 + c_2(p)(t-s)}\,ds\,,
\]
thereby concluding the proof.
\end{proof}

In a similar fashion as in the proof of Lemma~\ref{Lp-bounds-singular}, we obtain the following result.

\begin{lemma}\label{lem_uni_sing} 
Let $\mu^\gamma$ be a solution of \eqref{eq:kinetic-rescaled} with sufficient regularity. Further, let $p\in[2,\infty)$ such that
\[
	\|\nabla K\star \varrho \|_{W^{1,\infty}} \le c_{K,p}\lt(1 + \|\nu\|_{W_{x,H}^{1,p}}\rt),\qquad \varrho = \pi^x_\#\nu\,,\qquad \text{$\nu\in W_{x,H}^{1,p}(\R^d\times\R^d)\,.$}
\]
There there exists $t_q^{**} > 0$ such that 
\[
\sup_{0 \leq t \leq t_q^{**}} \|\nabla_x \mu_t^\gamma\|_{L_H^{q'}} \leq C\,, 
\]
where $C>0$ is independent of $\gamma>0$. 
\end{lemma}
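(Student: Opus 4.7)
The plan is to extend the argument of Lemma~\ref{Lp-bounds-singular} to the first spatial derivatives, treating the coefficients $c_0$ and $c_1$ appearing in Lemma~\ref{lem_xi} as solution-dependent rather than uniform a priori bounds. The strengthened hypothesis $\|\nabla K\star\varrho\|_{W^{1,\infty}}\le c_{K,p}(1+\|\nu\|_{W_{x,H}^{1,p}})$ is precisely what enables this: it controls not only $\|\nabla K\star\rho\|_{L^\infty}$ but also $\|\nabla^2 K\star\rho\|_{L^\infty}$, and hence $\|\sfF(\cdot,\rho)\|_{\Lip}$ via assumption $(\textbf{A}_\Phi^1)$, in terms of the $W_{x,H}^{1,p}$-norm of the lifted measure.

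First I would apply Lemma~\ref{Lp-bounds-singular}, whose $L^\infty$ hypothesis is implied by the current $W^{1,\infty}$ assumption together with Lemma~\ref{lem:lp-rho-u}. This yields $t_p^*>0$ and $M>0$, both independent of $\gamma\ge1$, with $\sup_{t\in[0,t_p^*]}\|\mu_t^\gamma\|_{L_H^p}\le M$, thereby controlling the lower-order term that appears on the right-hand side of the estimate in Lemma~\ref{lem_xi}. Next I would reopen the proof of Lemma~\ref{lem_xi}, applying Lemma~\ref{lem:A1} to $\partial_{x_k}\mu^\gamma$ with source $g_t^k = (\nabla K\star\rho_t^\gamma)\,\partial_{x_k}\mu_t^\gamma - \partial_{x_k}\sfF(\cdot,\rho_t^\gamma)\,\mu_t^\gamma$, but now using the pointwise-in-time bounds
\[
\|\nabla K\star\rho_t^\gamma\|_{L^\infty}+\|\sfF(\cdot,\rho_t^\gamma)\|_{\Lip}\le C\bigl(1+\|\mu_t^\gamma\|_{W_{x,H}^{1,p}}\bigr)
\]
in place of the fixed constants $c_0,c_1$. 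Following the same computation as in Lemma~\ref{lem_xi}, this produces, for almost every $t\in[0,t_p^*]$,
\[
\frac{d}{dt}\|\nabla_x\mu_t^\gamma\|_{L_H^p}^p \le C\bigl(1+\|\mu_t^\gamma\|_{W_{x,H}^{1,p}}\bigr)^2\bigl(\|\nabla_x\mu_t^\gamma\|_{L_H^p}^p + \|\mu_t^\gamma\|_{L_H^p}^p\bigr),
\]
where the dissipative $\nabla_v$-term is discarded as in Lemma~\ref{lem_xi}.

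Setting $Y_t:=\|\mu_t^\gamma\|_{W_{x,H}^{1,p}}^p$ and combining with the uniform $L_H^p$-bound from the first step, I arrive at a closed nonlinear differential inequality of the form $Y_t'\le C(1+Y_t^{1/p})^2\,Y_t$. A standard comparison argument then yields $t_q^{**}\in(0,t_p^*]$ and a $\gamma$-independent bound for $Y_t$ on $[0,t_q^{**}]$, using the uniform-in-$\gamma$ initial bound on $\|\mu_0^\gamma\|_{W_{x,H}^{1,p}}$ assumed in Theorem~\ref{thm:well-posedness}. The main obstacle is that the source term contains the product $\partial_{x_k}\sfF(\cdot,\rho_t^\gamma)\,\mu_t^\gamma$, which after H\"older genuinely couples the $\nabla_x$- and $L_H^p$-norms; without first fixing the $L_H^p$-norm through Lemma~\ref{Lp-bounds-singular}, the resulting inequality would not close, and one must ensure the maximal existence time of the comparison ODE depends only on the uniform initial bound and not on $\gamma$.
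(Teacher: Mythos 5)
Your proposal is correct and matches the paper's intended argument: the paper gives no separate proof of this lemma, stating only that it follows ``in a similar fashion'' to Lemma~\ref{Lp-bounds-singular}, which is exactly the nonlinear Gr\"onwall/comparison bootstrap you carry out on $\|\mu_t^\gamma\|_{W_{x,H}^{1,p}}^p$ using Lemma~\ref{lem:A1} with solution-dependent coefficients. (Your preliminary invocation of Lemma~\ref{Lp-bounds-singular} is harmless but not strictly needed, since $\|\mu_t^\gamma\|_{L_H^p}\le\|\mu_t^\gamma\|_{W_{x,H}^{1,p}}$ already lets the inequality close in the single quantity $Y_t$.)
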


The following lemma provides a class of examples of interaction potentials $K$ that satisfies the requirements of Lemma~\ref{lem_uni_sing}.

\begin{lemma}\label{lem_klip} Let $q\in(1,\infty)$, $p=\max\{2,q/(q-1)\}$. Suppose $R\ge 1$ exists such that
\[
	c'_{K} := \|\nabla K\|_{L^q(B_{2R})} + \|\nabla^2 K\|_{L^\infty(\R^d\setminus B_R)}<\infty
	\,.
\]
Then there exists a constant $c_{K,p}'>0$ such that the following estimate holds:
\[
	\|\nabla K\star\varrho\|_{\Lip} \le c_{K,q}'\lt( 1 + \|\nu\|_{W_{x,H}^{1,p}} \rt)\,,\qquad \varrho = \pi^x_\#\nu\,,\qquad \nu\in W_{x,H}^{1,q'}(\R^d\times\R^d)\,.
\]
\end{lemma}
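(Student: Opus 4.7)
\textit{Proof proposal for Lemma~\ref{lem_klip}.} The plan is to control $\|\nabla K\star\varrho\|_{\Lip}$ by bounding $\|\nabla(\nabla K\star\varrho)\|_{L^\infty}$, and to do so through a cutoff decomposition isolating the singularity of $\nabla K$ at the origin. Fix a smooth cutoff $\chi\in\calC_c^\infty(\R^d)$ with $\chi\equiv 1$ on $B_R$, $\mathrm{supp}\,\chi\subset B_{2R}$ and $\|\nabla\chi\|_{L^\infty}\le C/R$, and write
\[
\nabla K = K_1 + K_2, \qquad K_1 := \chi\,\nabla K,\quad K_2 := (1-\chi)\,\nabla K.
\]
The two pieces will be handled by distinct mechanisms: $K_1\star\varrho$ by transferring a derivative onto $\varrho$ (exploiting the $L^q$-bound of $\nabla K$ on $B_{2R}$), and $K_2\star\varrho$ by direct differentiation (exploiting the $L^\infty$-bound of $\nabla^2 K$ outside $B_R$).

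For $K_2$, the weak gradient $\nabla K_2 = -(\nabla\chi)\otimes\nabla K + (1-\chi)\nabla^2 K$ belongs to $L^\infty(\R^d)$ with norm controlled by $c_K'$ and $R$: on the support of $\nabla\chi$, namely the annulus $B_{2R}\setminus B_R$, $\nabla K$ is itself bounded because $\nabla^2 K\in L^\infty(\R^d\setminus B_R)$ and the annulus is compactly contained in $\R^d\setminus B_R$. Since $\varrho$ is a probability density, Young's inequality yields
\[
\|\nabla(K_2\star\varrho)\|_{L^\infty} = \|\nabla K_2\star\varrho\|_{L^\infty} \le \|\nabla K_2\|_{L^\infty}\,\|\varrho\|_{L^1} \le C(R,c_K').
\]
For $K_1$, which is compactly supported in $B_{2R}$ and belongs to $L^q(\R^d)$, the identity $\nabla(K_1\star\varrho) = K_1\star\nabla\varrho$ will be justified classically, since $\varrho\in W^{1,p}(\R^d)$ by Lemma~\ref{lem:lp-rho-u} applied with $k=1$. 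Setting $p':=\min\{q,2\}$, so that $1/p'+1/p=1$ with $p=\max\{2,q/(q-1)\}$, Young's convolution inequality combined with H\"older's inequality on $B_{2R}$ gives
\[
\|K_1\star\nabla\varrho\|_{L^\infty} \le \|K_1\|_{L^{p'}}\|\nabla\varrho\|_{L^p} \le C(R)\,\|\nabla K\|_{L^q(B_{2R})}\,\|\nu\|_{W^{1,p}_{x,H}},
\]
where the last factor invokes the bound $\|\nabla\varrho\|_{L^p}\le C\|\nabla_x\nu\|_{L^p_H}$ implicit in the proof of Lemma~\ref{lem:lp-rho-u}.

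Summing the two contributions and absorbing constants into $c_{K,q}'$ produces the stated estimate. The main technical point will be to verify that the exponent pairing $(p',p)$ works uniformly across both sub-regimes: when $q\ge 2$ one has $p=p'=2$, and $K_1\in L^2(\R^d)$ follows from $K_1\in L^q$ with compact support via H\"older; when $q<2$ one has $p=q'$ and $p'=q$, where the estimate reduces to the natural duality pairing. A secondary obstacle is justifying $\nabla(K_1\star\varrho)=K_1\star\nabla\varrho$ rigorously from $K_1\in L^q_{\mathrm{loc}}$ and $\varrho\in W^{1,1}_{\mathrm{loc}}$, but this is standard and follows from differentiation under the integral sign once Young's inequality guarantees absolute integrability.
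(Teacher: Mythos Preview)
Your proof is correct and follows essentially the same approach as the paper: a smooth cutoff splitting near/far from the singularity, transferring one derivative onto the density in the near part (so that only $\|\nabla K\|_{L^q(B_{2R})}$ is used), and invoking $\|\nabla^2 K\|_{L^\infty(\R^d\setminus B_R)}$ directly in the far part. The only organizational difference is that you work on $\R^d$ with $\varrho$ and invoke Lemma~\ref{lem:lp-rho-u} to pass to $\|\nu\|_{W^{1,p}_{x,H}}$, while the paper works directly with $\mu$ on $\R^{2d}$ and integrates by parts there; also, the paper keeps the $\nabla\chi$-term together with the $L^q(B_{2R})$ estimate, which spares the side observation that $\nabla K$ is bounded on the annulus. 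One small wording fix: the annulus $B_{2R}\setminus B_R$ is not compactly contained in $\R^d\setminus B_R$, but your conclusion still holds since the closed annulus is compact and $\nabla K$ is Lipschitz there.
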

\begin{proof}
By density argument, it is sufficient to show the estimate for $K\in \calC_b^2(\R^d)$.
	
Let $\chi_R$ be a smooth cut-off function satisfying $1_{B_R}\le \chi_R\le 1_{B_{2R}}$ and $|\nabla\chi_R|(x)\le 1/2$ for all $x\in\R^d$. Then, for each $k,j=1,\ldots,d$, we have
\begin{align*}
&\lt|\partial_{x_k} \bigl(\partial_j K\star \rho\bigr)(x)\rt| \cr
&\quad = \lt|\intrr \partial_{x_k}(\partial_j K)(x-z)\,\mu(dzdv)\rt| \\
&\quad \le \lt|\intrr \chi_R^2(x-z)\,\partial_{z_k}(\partial_j K)(x-z)\,\mu(dzdv)\rt| + \lt|\intr (1-\chi_R^2)(x-z)(\partial_{x_k}\pa_j K)(x-z)\,\rho(dz)\rt| \\
&\quad =: \mathrm{(I)} + \mathrm{(II)},
\end{align*}
where we use the integration by parts to estimate
\begin{align*}
\mathrm{(I)} &\leq 2\intrr |\chi_R(x-z)||\nabla \chi_R(x-z)| |(\partial_j K)(x-z)|\, \mu(dzdv) \cr
&\quad + \intrr \chi_R^2(x-z)|(\partial_j K)(x-z)||(\partial_{z_k}\mu)(z,v)|\, \leb^{2d}(dzdv)\cr
&\leq \begin{cases}
 	\|\partial_j K\|_{L^q(B_{2R})}\bigl( \|\mu\|_{L^{q'}} + \|\partial_{x_k}\mu\|_{L^{q'}}\bigr)	&\text{for $q\in(1,2]$} \\
 	c_{q,2,R}\|\partial_j K\|_{L^q(B_{2R})}\bigl( \|\mu\|_{L^2} + \|\partial_{x_k}\mu\|_{L^2}\bigr)	&\text{for $q\in(2,\infty)$}
 \end{cases}\,.
\end{align*}
Here $q'=q/(q-1)$, and $c_{q,2,R}>0$ is a constant obtained from the standard $(L^q,L^2)$-interpolation inequality on the bounded domain $B_{2R}$. Since $\|\mu\|_{L^{p}} \leq \|\mu\|_{L_H^{p}}$ and $\|\partial_{x_k}\mu\|_{L^{p}} \leq  \|\partial_{x_k}\mu\|_{L_H^{p}}$, we obtain
\[
\mathrm{(I)} \leq \max\{1,c_{q,2,R}\}\|\partial_j K\|_{L^q(B_{2R})} \lt( \|\partial_{x_k} \mu\|_{L_H^{p}} + \|\mu\|_{L_H^{p}}\rt)\,.
\]
As for $\mathrm{(II)}$, we easily obtain
\[
\mathrm{(II)} \leq \|\partial_k\partial_j K\|_{L^\infty(\R^d\setminus B_R)} \intr (1-\chi_R^2)(x-z)\,\rho(dz) \leq \|\partial_k\partial_j K\|_{L^\infty(\R^d\setminus B_R)}\,.
\]
Putting the estimates together and summing up in $k,j=1,\ldots,d$ yields
\[
	\|\nabla^2 K\star \rho\bigr\|_{L^\infty} \le c_{K,q}'\lt( 1 + \|\nabla_x \mu\|_{L_H^{p}} + \|\mu\|_{L_H^{p}}\rt),
\]
with the constant $c_{K,q}'=\max\{1,c_{q,2,R}\}c_K'$.
\end{proof}

\begin{proof}[Proof of Proposition~\ref{lem:regularity-summary}]
	The proof follows from a combination of Lemmas~\ref{lem_u}, \ref{Lp-bounds-singular}, \ref{lem_xi}, and \ref{lem_uni_sing}.
\end{proof}

%
%
%
%
%
%
%
\section{Stability for singular interaction forces}\label{sec:SIP}
 

In this section, we deal with the singular interaction potentials and provide the details of the proofs for Theorem~\ref{thm:stability-limit}. We begin by providing a Lipschitz-type estimate of the interaction forces which will be crucially used for the stability estimate.

\begin{lemma}\label{lem:K-infty-lipschitz}
Let $q \in (1,\infty]$. Suppose that there exists $R \in (0,\infty)$ such that
\[
    c_K:=\|\nabla K\|_{L^q(B_{2R})} + \|\nabla^2K\|_{L^\infty(\R^d\setminus B_R)} <\infty\,.
\]
Then for any $\mu^\gamma\in W_{x,H}^{1,q'}(\R^d\times\R^d)$, $q'=q/(q-1)$, and
\[
	M(\mu^\gamma):= \|\mu^\gamma\|_{W_{x,H}^{1,q'}} + \intrr |v|\,\mu^\gamma(dxdv) <\infty\,,
\]
the following estimate holds:
\[
|(\nabla K\star \rho^\gamma)(x-v/\gamma) - (\nabla K\star \bar\rho^\gamma)(x)| \le  \frac{c_0}{\gamma}(1+|v|)\lt(1+M(\mu^\gamma)\rt)
\]
for some constant $c_0>0$, independent of $\mu^\gamma$ and $\gamma>0$. 
\end{lemma}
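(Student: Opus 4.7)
The plan is to split $I := |(\nabla K\star\rho^\gamma)(x-v/\gamma) - (\nabla K\star\bar\rho^\gamma)(x)|$ via the triangle inequality, inserting $(\nabla K\star\rho^\gamma)(x)$ as an intermediate term:
\[
I \le I_A + I_B,\quad I_A := \bigl|(\nabla K\star\rho^\gamma)(x-v/\gamma) - (\nabla K\star\rho^\gamma)(x)\bigr|,\quad I_B := \bigl|(\nabla K\star(\rho^\gamma-\bar\rho^\gamma))(x)\bigr|.
\]
Here $I_A$ measures a small shift of $\nabla K\star\rho^\gamma$ in its argument, while $I_B$ measures the difference between convolution against $\rho^\gamma$ and against the coarse-grained density $\bar\rho^\gamma$. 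To separate the singular and regular parts of $\nabla K$, I would fix a smooth cutoff $\chi_R$ with $\mathbf{1}_{B_R}\le\chi_R\le\mathbf{1}_{B_{2R}}$ and decompose $\nabla K = G_1 + G_2$ with $G_1 := \chi_R\nabla K$ (compactly supported in $B_{2R}$, with $\|G_1\|_{L^q}\le c_K$) and $G_2 := (1-\chi_R)\nabla K$ (vanishing on $B_R$, globally Lipschitz with $\|\nabla G_2\|_{L^\infty}$ controlled by $c_K$ and $R$, using $\nabla^2 K\in L^\infty(\R^d\setminus B_R)$ together with the fact that the combined Lipschitz-plus-$L^q$ information bounds $\nabla K$ on the transition annulus $B_{2R}\setminus B_R$).

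For $I_A$, I would estimate the regular and singular parts separately. The Lipschitz part gives $|G_2\star\rho^\gamma(x-v/\gamma) - G_2\star\rho^\gamma(x)| \le \|\nabla G_2\|_{L^\infty}\,|v|/\gamma$ directly from the Lipschitz bound on $G_2$ and $\|\rho^\gamma\|_{L^1}=1$. For the singular part, the identity $\nabla(G_1\star\rho^\gamma) = G_1\star\nabla\rho^\gamma$ combined with H\"older yields a Lipschitz constant at most $\|G_1\|_{L^q}\|\nabla\rho^\gamma\|_{L^{q'}}$, and Lemma~\ref{lem:lp-rho-u} controls $\|\nabla\rho^\gamma\|_{L^{q'}}$ by $\|\mu^\gamma\|_{W^{1,q'}_{x,H}}$. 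Combined, $I_A \le C\gamma^{-1}|v|(1+M(\mu^\gamma))$.

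For $I_B$, the Lipschitz contribution is handled via the coupling $\bar\rho^\gamma = (\pi^x\circ\Gamma^\gamma)_\#\mu^\gamma$, which gives
\[
G_2\star(\rho^\gamma-\bar\rho^\gamma)(x) = \intrr \bigl[G_2(x-y) - G_2(x-y-w/\gamma)\bigr]\,\mu^\gamma(dydw),
\]
bounded by $\|\nabla G_2\|_{L^\infty}\gamma^{-1}\intrr|w|\,\mu^\gamma(dydw) \le \|\nabla G_2\|_{L^\infty}\gamma^{-1}M(\mu^\gamma)$. The singular contribution is estimated by H\"older, $|G_1\star(\rho^\gamma-\bar\rho^\gamma)(x)| \le \|G_1\|_{L^q}\|\rho^\gamma-\bar\rho^\gamma\|_{L^{q'}}$, reducing the task to proving
\[
\|\rho^\gamma - \bar\rho^\gamma\|_{L^{q'}} \le C\gamma^{-1}\|\nabla_x\mu^\gamma\|_{L^{q'}_H}.
\]
For this I would start from $(\bar\rho^\gamma-\rho^\gamma)(x) = -\gamma^{-1}\int_0^1\intr v\cdot(\nabla_x\mu^\gamma)(x-\theta v/\gamma,v)\,dv\,d\theta$ (fundamental theorem of calculus applied to the shift by $v/\gamma$), apply Minkowski's integral inequality in $L^{q'}_x$ to exchange the $v$- and $x$-integrals, and then a weighted H\"older inequality in $v$ with Gaussian exponent tuned so that the resulting factor $e^{(q'-1)|v|^2/2}$ matches the velocity component of the weight $e^{(q'-1)H}$ in $\|\cdot\|_{L^{q'}_H}$ (using $\Phi\ge 0$ to absorb the spatial weight). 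This produces $I_B \le C\gamma^{-1}(1+M(\mu^\gamma))$, and summing with $I_A$ gives the claimed bound.

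The main obstacle is calibrating the Gaussian weight in the last H\"older step correctly: the factor $e^{-|v|^2/2}$ must pair with $e^{(q'-1)|v|^2/2}$ to reconstruct the velocity weight in $\|\cdot\|_{L^{q'}_H}$, and the finiteness of $\intr|v|^q e^{-|v|^2/2}\,dv$ is what supplies the $\gamma$-independent constant. All remaining steps reduce to standard applications of H\"older's and Young's inequalities together with the Lipschitz/coupling structure already exploited in Sections~\ref{sec:CG-map}--\ref{sec:bL}.
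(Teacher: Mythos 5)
Your proposal is correct, but it follows a genuinely different route from the paper's. The paper never introduces the intermediate term $(\nabla K\star\rho^\gamma)(x)$: it writes the full difference as a single integral $\frac{1}{\gamma}\int_0^1\intrr (D^2K)(\cdot)(w-v)\,\mu^\gamma(dzdw)\,d\theta$ against the phase-space measure, and then, to tame $D^2K$ near the singularity, integrates by parts in $z$ against the weighted density $u^\gamma=\mu^\gamma e^{H}$ relative to $\sigma_\Phi=e^{-\Phi}\leb^d$; this shifts one derivative from $K$ onto $\mu^\gamma$ but generates an extra term containing $\nabla\Phi$, which is absorbed using the standing assumption $(\textbf{A}_\Phi^2)$. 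You instead split off the argument-shift error $I_A$ and the measure-difference error $I_B$, decompose $\nabla K=\chi_R\nabla K+(1-\chi_R)\nabla K$, and handle the singular part by Young/H\"older at the level of the spatial marginals: $\|\nabla\rho^\gamma\|_{L^{q'}}$ (via Lemma~\ref{lem:lp-rho-u}) for $I_A$, and the quantitative bound $\|\rho^\gamma-\bar\rho^\gamma\|_{L^{q'}}\lesssim\gamma^{-1}\|\nabla_x\mu^\gamma\|_{L^{q'}_H}$ for $I_B$, obtained by the fundamental theorem of calculus, Minkowski's integral inequality, and the Gaussian-weighted H\"older step you describe (whose exponent bookkeeping indeed closes: $q(q'-1)/(2q')=1/2$, so the conjugate factor is $\intr|v|^q e^{-|v|^2/2}\,dv<\infty$). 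What your route buys is that it avoids the weighted integration by parts entirely, hence needs only $\Phi\ge0$ rather than $(\textbf{A}_\Phi^2)$ for this lemma, and it isolates a density-level analogue of Lemma~\ref{lem_rho2} ($\rho^\gamma-\bar\rho^\gamma$ is $O(1/\gamma)$ in $L^{q'}$, not just in $\d_2$) that is of independent use. What the paper's single-integral form buys is that the factor $(1+|v|)$ and the first velocity moment in $M(\mu^\gamma)$ emerge transparently from the kernel $|w-v|$, and the cutoff bookkeeping is done once rather than twice. Two shared caveats, neither of which is a defect relative to the paper: your argument, like the paper's, really requires $q'>1$ (at $q=\infty$ the weighted H\"older in $v$ degenerates to a $\sup_v|v|$, just as the paper's $r>1$ estimate excludes the derivative terms when $q'=1$); and your observation that $\nabla K$ is bounded on the transition annulus $B_{2R}\setminus B_R$ (Lipschitz there plus $L^q$ forces $L^\infty$) is needed and correctly flagged, since the hypothesis controls $\nabla^2K$, not $\nabla K$, outside $B_R$.
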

\begin{proof} By density argument, it is sufficient to show the estimate for $K\in \calC_b^2(\R^d)$. 

We begin by writing
\begin{align*}
&(\nabla K\star\rho^\gamma)(x-v/\gamma) - (\nabla K\star\bar\rho^\gamma)(x) \\
&\quad =\intrr \Bigl((\nabla K)(x-v/\gamma-z) - (\nabla K)(x- w/\gamma - z)\Bigr)\,\mu^\gamma(dzdw) \\
&\quad =\frac{1}{\gamma}\int_0^1 \intrr  (D^2 K)((x-z) + \theta(w-v)/\gamma)) (w-v)\,\mu^\gamma(dzdw)\,d\theta\,.
\end{align*}
In the following, we set $A_r=A_r^{\theta,\gamma}(x,v,w):=B_r(x+\theta(w-v)/\gamma)$ for each $x,v,w\in\R^d$ and $r\ge 1$. Now consider a smooth cut-off function $\chi_R$ satisfying $1_{B_R}\le \chi_R\le 1_{B_{2R}}$ for all $x\in\R^d$. Denoting $u^\gamma := \mu^\gamma e^{H}$ and using the shorthand $\bar x=x+\theta(w-v)/\gamma$, we obtain for every $k,j=1,\ldots,d$ and almost every $w\in\R^d$,
\begin{align*}
\begin{aligned}
&\intr (\partial_{x_k}\partial_{j}K)(\bar x-z)\,u^\gamma(z,w)\,\sigma_\Phi(dz)\\
&\qquad = -\intr \chi_R^2(\bar x-z) (\partial_{z_k} \partial_jK)(\bar x-z)\,u^\gamma(z,w)\,\sigma_\Phi(dz) \\
&\quad\qquad +\intr (1-\chi_R^2(\bar x-z))(\partial_{z_k} \partial_jK)(\bar x-z)\,u^\gamma(z,w)\,\sigma_\Phi(dz) \\
&\qquad = \intr \chi_R^2(\bar x-z) (\partial_jK)(\bar x-z)(\partial_{z_k}u^\gamma)(z,w)\,\sigma_\Phi(dz) \\
&\quad\qquad - \intr \chi_R^2(\bar x-z) (\partial_j K)(\bar x-z)(\partial_k\Phi)(z)\,u^\gamma(z,w)\,\sigma_\Phi(dz) \\
&\quad\qquad -2\intr \chi_R(\bar x-z)(\partial_k\chi_R)(\bar x-z)(\partial_j K)(\bar x-z)\,u^\gamma(z,w)\,\sigma_\Phi(dz) \\
&\quad\qquad +\intr (1-\chi_R^2(\bar x-z))(\partial_{z_k} \partial_jK)(\bar x-z)\,u^\gamma(z,w)\,\sigma_\Phi(dz)\,.
\end{aligned}
\end{align*}
Here, the last term on the right-hand side of the above can be bounded by
\[
	\|\nabla^2K\|_{L^\infty(\R^d\setminus B_R)}\|u^\gamma(\cdot,w)\|_{L^1(\sigma_\Phi)} \leq 2c_K\|u^\gamma(\cdot,w)\|_{L^1(\sigma_\Phi)},
\]
since $ 1-\chi_R^2 \leq 1$. As for the first and third term, we can simply use H\"older's inequality to bound them from above by
\[
(1 + 2\|\nabla \chi_R\|_{L^\infty})\|\nabla K\|_{L^q(B_{2R})} \|u^\gamma(\cdot,w)\|_{W_x^{1,q'}(\sigma_\Phi)} \leq C_{K,R}\|u^\gamma(\cdot,w)\|_{W_x^{1,q'}(\sigma_\Phi)}.
\]
We then estimate the second term as
$$\begin{aligned}
&\lt|\intr \chi_R^2(\bar x-z) (\partial_j K)(\bar x-z)(\partial_k\Phi)(z)\,u^\gamma(z,w)\,\sigma_\Phi(dz) \rt|\cr
&\quad \leq \lt(\intr |\chi_R^2(\bar x-z) (\partial_j K)(\bar x-z)(\partial_k\Phi)(z)|^q e^{-\Phi(z)}\,\leb^{d}(dz)  \rt)^{1/q}\|u^\gamma(\cdot,w)\|_{L_x^{q'}(\sigma_\Phi)}\cr
&\quad \leq c_{\Phi,q}^{1/q} \|\nabla K\|_{L^q(B_{2R})}\|u^\gamma(\cdot,w)\|_{L_x^{q'}(\sigma_\Phi)},
\end{aligned}$$
where $c_{\Phi,q} > 0$ is the same constant that appears in ($\textbf{A}_\Phi^2$), and we used
$$\begin{aligned}
\intr |\chi_R^2(\bar x-z) (\partial_j K)(\bar x-z)(\partial_k\Phi)(z)|^q\, \sigma_\Phi(dz) &= \int_{B_R(\bar x)} |(\partial_j K)(\bar x-z)|^q |(\partial_k\Phi)(z)|^q\, \sigma_\Phi(dz) \cr
&\leq c_{\Phi,q}\int_{B_R(\bar x)} |(\partial_j K)(\bar x-z)|^q\,\leb^{d}(dz) \\
&\leq c_{\Phi,q} \|\nabla K\|_{L^q(B_{2R})}^q.
\end{aligned}$$
Hence, for almost every $w\in \R^d$, we find that
\begin{align*}
\lt|\intr (D^2K)(\bar x - z)\,u^\gamma(z,w)\,\sigma_\Phi(dz)\rt| &\le c_{R,K,\Phi,q}\Bigl(\|u^\gamma(\cdot,w)\|_{W_x^{1,q'}(\sigma_\Phi)} +  \|u^\gamma(\cdot,w)\|_{L^1(\sigma_\Phi)}\Bigr) \,.
\end{align*}
Furthermore, since for any $f\in L^r(\eta)$, $r>1$,
\begin{align*}
&\intr |w-v| \|f(\cdot,w)\|_{L^r(\sigma_\Phi)}\,\scrN^d(dw) \\
&\qquad \le \intr |w|\|f(\cdot,w)\|_{L^r(\sigma_\Phi)}\,\scrN^d(dw) + |v|\intr\|f(\cdot,w)\|_{L^r(\sigma_\Phi)}\,\scrN^d(dw) \\
&\qquad \le \left(\left(\intr|w|^{r'}\scrN^d(dw)\right)^{1/r'} + |v|\right)\|f\|_{L^r(\eta)} \le c_{\scrN,r}(1+|v|)\|f\|_{L^r(\eta)}\,,
\end{align*}
where $c_{\scrN,r}>0$ is a constant depending only on $\scrN^d$ and $r$, and for $r=1$,
\[
\intr |w-v| \|u^\gamma(\cdot,w)\|_{L^1(\sigma_\Phi)}\,\scrN^d(dw) \le |v| + \intrr |w|\,\mu^\gamma(dzdw)\,,
\]
we then obtain
\begin{align*}
&\intrr (D^2 K)(\bar x- z)\cdot(w-v)\,\mu^\gamma(dz dw)\cr
&\hspace{6em} = \intr (w-v)\cdot\left(\intr (D^2K)(\bar x- z)\,u^\gamma(z,w)\,\sigma_\Phi(dz) \right)\scrN^d(dw) \\
&\hspace{6em} \le c_0(1+|v|)\lt(1 + \|u^\gamma\|_{W_x^{1,q'}(\eta)} + \intrr |w|\,\mu^\gamma(dzdw)\rt),
\end{align*}
for the constant $c_0 >0$ independent of $u^\gamma$ and $(x,v)\in\R^d\times\R^d$. This completes the proof.
\end{proof}

We next estimate $\|\mathsf{e}_t^\gamma\|_{L^2(\bar\rho_t^\gamma)}$ in the stability estimate \eqref{eq:W2-stability}.

\begin{lemma}\label{lem_e} Under the same assumptions in Lemma \ref{lem:K-infty-lipschitz}, we have
\[
\|\mathsf{e}_t^\gamma\|_{L^2(\bar\rho^\gamma_t)}^2  \leq \frac{C}{\gamma^2}\lt(1+M(\mu_t^\gamma)\rt)^2,
\]
where $C>0$ is independent of $\gamma$.
\end{lemma}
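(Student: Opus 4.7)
The starting point is Lemma \ref{lem_ee}, which reduces the estimate to bounding the pointwise integrand $|\sfF(x,\rho_t^\gamma) - \sfF(x+v/\gamma,\bar\rho_t^\gamma)|^2$ and then integrating against $\mu_t^\gamma$. The plan is to split the force difference using $\sfF(y,\varrho) = -(\nabla\Phi)(y) - (\nabla K\star\varrho)(y)$:
\[
\sfF(x,\rho_t^\gamma) - \sfF(x+v/\gamma,\bar\rho_t^\gamma) = \bigl[(\nabla\Phi)(x+v/\gamma)-(\nabla\Phi)(x)\bigr] + \bigl[(\nabla K\star\bar\rho_t^\gamma)(x+v/\gamma)-(\nabla K\star\rho_t^\gamma)(x)\bigr],
\]
and handle the two brackets by distinct means.

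For the external-potential bracket, the Lipschitz estimate $\|\nabla\Phi\|_{\Lip}\le c_\Phi$ from $(\textbf{A}_\Phi^1)$ directly gives $|(\nabla\Phi)(x+v/\gamma)-(\nabla\Phi)(x)| \le c_\Phi|v|/\gamma$. For the interaction bracket, I would invoke Lemma \ref{lem:K-infty-lipschitz} after substituting $x\mapsto x+v/\gamma$ in its statement; since the right-hand side of that bound is independent of the base point, the relabelling preserves the estimate and yields
\[
|(\nabla K\star\bar\rho_t^\gamma)(x+v/\gamma)-(\nabla K\star\rho_t^\gamma)(x)| \le \frac{c_0}{\gamma}(1+|v|)(1+M(\mu_t^\gamma)).
\]
Combining the two contributions and using $(a+b)^2 \le 2(a^2+b^2)$ gives the pointwise bound
\[
|\sfF(x,\rho_t^\gamma) - \sfF(x+v/\gamma,\bar\rho_t^\gamma)|^2 \le \frac{C_1}{\gamma^2}(1+|v|)^2(1+M(\mu_t^\gamma))^2,
\]
with $C_1$ depending only on $c_\Phi$ and $c_0$.

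It remains to integrate against $\mu_t^\gamma$. Since $\mu_t^\gamma$ is a probability measure, $\intrr(1+|v|)^2\,d\mu_t^\gamma = 1 + 2\intrr|v|\,d\mu_t^\gamma + \intrr|v|^2\,d\mu_t^\gamma$, where the first-moment contribution is directly controlled by $M(\mu_t^\gamma)$ by definition. The main obstacle will be absorbing the second moment $\intrr|v|^2\,d\mu_t^\gamma$ into a universal constant, since $M(\mu_t^\gamma)$ only encodes the first moment in $v$. The plan here is to invoke the uniform-in-$\gamma$ relative entropy estimate of Lemma \ref{lem:g-estimate}, which by the Donsker--Varadhan duality argument of Remark \ref{rmk:2m} gives $\sup_{\gamma\ge 1}\sup_{t\in[0,T]}\intrr|v|^2\,d\mu_t^\gamma < \infty$ whenever the initial datum has finite entropy (a standing hypothesis in the ambient Theorem \ref{thm:stability-limit}). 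This uniform second-moment bound is absorbed into the constant $C$, producing the claimed estimate $\tfrac{C}{\gamma^2}(1+M(\mu_t^\gamma))^2$.
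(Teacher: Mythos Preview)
Your proof is correct and follows exactly the route the paper intends: the paper's proof is the single line ``This follows from a direct application of Lemmas~\ref{lem_ee} and \ref{lem:K-infty-lipschitz},'' and you have unpacked precisely that, splitting $\sfF$ into its $\nabla\Phi$ and $\nabla K\star$ parts, applying $(\textbf{A}_\Phi^1)$ and Lemma~\ref{lem:K-infty-lipschitz} (after the harmless shift $x\mapsto x+v/\gamma$), and integrating. Your observation that the resulting $\intrr(1+|v|)^2\,d\mu_t^\gamma$ requires the uniform second-moment bound from Lemma~\ref{lem:g-estimate}/Remark~\ref{rmk:2m} is a point the paper leaves implicit in the constant $C$ (which is only claimed independent of $\gamma$), so you are simply being more careful than the paper here.
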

\begin{proof}
This follows from a direct application of Lemmas~\ref{lem_ee} and \ref{lem:K-infty-lipschitz}.
\end{proof}

We can finally provide the details to the proof of Theorem~\ref{thm:stability-limit}.

\begin{proof}[Proof of Theorem~\ref{thm:stability-limit}]
	In the case {\em (i)} (respectively {\em (iii)}), Theorem~\ref{thm:DK-interaction}{\em (2)} (respectively {\em (3)}) provides an estimate for the modulated interaction energy: 
	\[
		-\scrD_K(\bar\mu_t^\gamma,\mu_t)\le c_1 \d_2^2(\bar\mu_t^\gamma,\mu_t)\,,
	\]
	for a constant $c_1>0$, independent of $\gamma\ge 1$. Clearly, the inequality above also holds true for the case {\em (ii)}, since in this case, $\scrD\ge 0$. On the other hand, Lemma~\ref{lem_e} gives us a handle on the error term $\|\mathsf{e}_t^\gamma\|_{L^2(\bar\rho_t^\gamma)}$.
	
	Together with the well-posedness result provided by Theorem~\ref{thm:well-posedness}, the general stability inequality \eqref{eq:W2-stability} then gives
\[
	\frac{1}{2}\frac{d}{dt}\d_2^2(\bar\rho_t^\gamma,\rho_t) \le  c_1\lt( \d_2^2(\bar\rho_t^\gamma,\rho_t)  + \frac{1}{\gamma^2}\rt)\qquad\text{for all $t\in[0,T_p]$},
\]
for some constant $c_1>0$, independent of $\gamma\ge 1$ and $t\in[0,T_p]$. Applying Gr\"onwall's lemma yields
\[
\sup_{0 \leq t \leq T_p}\d_2^2(\bar\rho_t^\gamma,\rho_t) \leq c_2\lt(\d_2^2(\bar\rho_0^\gamma,\rho_0) + \frac{1}{\gamma^2} \rt),
\]
for some constant $c_2>0$ is independent of $\gamma\ge 1$. Finally, using a similar argument as in the proof of Theorem~\ref{thm_reg}, we conclude that
\[
\sup_{0 \leq t \leq T_p}\d_2^2(\rho^\gamma_t,\rho_t) \leq C\lt(\d_2^2(\rho_0^\gamma,\rho_0) + \frac{1}{\gamma^2}\rt),
\]
where $C>0$ is independent of $\gamma\ge 1$.
\end{proof}

\section{Bounded interaction forces}\label{sec:bdd}


As mentioned in the introduction, we will employ a different approach to obtain stability estimates when $\nabla K$ is only assumed to be bounded. In any case, we will need a Lipschitz-type estimate in order to control the error term $\mathsf{e}_t^\gamma$. Here, we will make an additional assumption on the external potential $\Phi$.

\begin{lemma}\label{lem_wreg0} Let $\nabla K\in L^\infty(\R^d)$ and $\Phi$ satisfy additionally $\sigma_\Phi(\R^d)<\infty$.

Then for any $\mu^\gamma\in\calP(\R^d\times\R^d)$ with $\nabla_x\mu^\gamma\in L_H^2(\R^d\times\R^d)$,
the following estimate holds:
\[
|(\nabla K\star \rho^\gamma)(x-v/\gamma) - (\nabla K\star \bar\rho^\gamma)(x)| \le  \frac{c_0}{\gamma}(1+|v|)\|\nabla_x \mu^\gamma\|_{L_H^2}
\]
for some constant $c_0>0$, independent of $\mu^\gamma$ and $\gamma>0$.
\end{lemma}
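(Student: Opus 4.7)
Since $\nabla K$ is only assumed bounded, one cannot differentiate it as was done in Lemma~\ref{lem:K-infty-lipschitz}. The plan is instead to transfer the $\gamma^{-1}$-scale shift appearing in the argument of $\nabla K$ over to $\mu^\gamma$ via a change of variables, and then apply the fundamental theorem of calculus in the $x$-variable on $\mu^\gamma$ to extract $\nabla_x\mu^\gamma$, with the factor $1/\gamma$ emerging from the magnitude of the shift. A final Cauchy--Schwarz step, with the weight $e^{H/2}$, will then produce the $L_H^2$-norm of $\nabla_x\mu^\gamma$, with the assumption $\sigma_\Phi(\R^d)<\infty$ ensuring integrability of the dual factor.

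Concretely, using $\bar\rho^\gamma=(\pi^x\circ\Gamma^\gamma)_\#\mu^\gamma$, I would first write both convolutions as integrals against $\mu^\gamma$ and then translate $z\mapsto z-v/\gamma$ in the first and $z\mapsto z-w/\gamma$ in the second to obtain
\[
(\nabla K\star\rho^\gamma)(x-v/\gamma) - (\nabla K\star\bar\rho^\gamma)(x) = \intrr (\nabla K)(x-z)\bigl[\mu^\gamma(z-v/\gamma,w) - \mu^\gamma(z-w/\gamma,w)\bigr]\,\leb^{2d}(dzdw)\,.
\]
Writing the difference of $\mu^\gamma$-values as
\[
\mu^\gamma(z-v/\gamma,w) - \mu^\gamma(z-w/\gamma,w) = \frac{1}{\gamma}\int_0^1 (\nabla_x\mu^\gamma)\bigl(z-w/\gamma+\theta(w-v)/\gamma,w\bigr)\cdot(w-v)\,d\theta\,,
\]
bounding $|\nabla K|\le\|\nabla K\|_{L^\infty}$, and performing the change of variables $\tilde z := z-w/\gamma+\theta(w-v)/\gamma$ for each fixed $\theta$, one reduces matters to estimating the single integral
\[
\intrr |(\nabla_x\mu^\gamma)(\tilde z,w)|\,|w-v|\,\leb^{2d}(d\tilde zdw)\,.
\]

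The final step is a Cauchy--Schwarz inequality with weight $e^{H(\tilde z,w)/2}$, which yields
\[
\intrr |(\nabla_x\mu^\gamma)(\tilde z,w)|\,|w-v|\,\leb^{2d}(d\tilde zdw) \le \|\nabla_x\mu^\gamma\|_{L_H^2}\lt(\intrr |w-v|^2 e^{-H(\tilde z,w)}\,\leb^{2d}(d\tilde zdw)\rt)^{1/2}\,,
\]
and the last factor factorises as $\sqrt{\sigma_\Phi(\R^d)}\,(\intr |w-v|^2\,\scrN^d(dw))^{1/2} \le c(1+|v|)$ by elementary Gaussian moment estimates, concluding the proof. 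The main conceptual novelty compared to Lemma~\ref{lem:K-infty-lipschitz} is that we trade the missing regularity of $\nabla K$ for integrability of $e^{-\Phi}$ through the hypothesis $\sigma_\Phi(\R^d)<\infty$, so that the pointwise Lipschitz-type control on the convolution kernel is replaced by a Cauchy--Schwarz bound against the weighted Sobolev norm. I do not anticipate any substantive technical obstacle beyond carefully bookkeeping the two consecutive changes of variables and ensuring that the fundamental theorem of calculus is legitimate for a function with only weak spatial derivative in $L_H^2$, which follows by a standard mollification argument.
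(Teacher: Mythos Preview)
Your argument is correct and lands on exactly the same integral $\intrr |w-v|\,|\nabla_z\mu^\gamma(z,w)|\,\leb^{2d}(dzdw)$ that the paper obtains, followed by the identical weighted Cauchy--Schwarz step exploiting $\sigma_\Phi(\R^d)<\infty$. The one organizational difference is in how the derivative is transferred to $\mu^\gamma$: the paper first approximates by $K\in\calC_b^2(\R^d)$, applies the fundamental theorem of calculus to $\nabla K$ (producing $\nabla^2 K$, as in the proof of Lemma~\ref{lem:K-infty-lipschitz}), and then integrates by parts in $z$ to move the derivative onto $\mu^\gamma$; you instead shift variables first so that $\nabla K$ acquires a common argument $(x-z)$, and apply the fundamental theorem of calculus directly to $\mu^\gamma$. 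Your route is marginally more direct since it never differentiates $K$ and hence does not require the density argument in $\calC_b^2$; the paper's route has the virtue of paralleling Lemma~\ref{lem:K-infty-lipschitz} verbatim, making the comparison between the two lemmas transparent. Conceptually the two are the same maneuver, and the outcome is identical.
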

\begin{proof} 
The proof is very similar to that of Lemma \ref{lem:K-infty-lipschitz}. Again, by a density argument, it is sufficient to show the estimate for $K\in \calC_b^2(\R^d)$. Elementary computations give us the following estimate:
\begin{align*}
	&\lt|\int_0^1\intrr (\nabla^2 K)((x-z) + \theta(w-v)/\gamma))\,(w-v)\,\mu^\gamma(dzdw) \rt| \\
	&\qquad = \lt| \int_0^1 \intrr (\nabla K)((x-z) + \theta(w-v)/\gamma))(w_k-v_k) \,(\nabla_z\mu^\gamma)(z,w)\,\leb^{2d}(dzdw)\rt| \\
	&\qquad \le \|\nabla K\|_{L^\infty}\intrr |w-v||(\nabla_z\mu^\gamma)(z,w)|\,\leb^{2d}(dzdw) \\
	&\qquad \le (2\pi)^{d/2}\sigma_\Phi(\R^d)\|\nabla K\|_{L^\infty}\|\nabla_z\mu^\gamma\|_{L_H^2}  \intr |w-v|\, \scrN^d(dw) \\
	&\qquad \le c_0(1+|v|)\|\nabla_z\mu^\gamma\|_{L_H^2}\,,
\end{align*}
for some constant $c_0>0$ depending only on $d$, $\sigma_\Phi(\R^d)$ and $\|\nabla K\|_{L^\infty}$. The rest of the proof follows along the same line as that of Lemma~\ref{lem:K-infty-lipschitz}.
\end{proof}

\begin{remark}
	It is not difficult to see that the additional assumption on $\Phi$ in Lemma~\ref{lem_wreg0} is satisfied for potentials $\Phi$ satisfying $\Phi(x)/|x|\to \infty$ as $|x|\to\infty$.
\end{remark}

The following proposition provides a counterpart of the general stability inequality \eqref{eq:W2-stability} in the context of the relative entropy, which makes use of a result established in \cite[Theorem 2.18]{DLPS18}.

\begin{proposition}\label{prop_re}
	Let $\nabla K\in L^\infty(\R^d)$ and $\Phi$ satisfy additionally $\sigma_\Phi(\R^d)<\infty$. 
	
	Furthermore, let $(\mu_t^\gamma)_{t\in[0,T]}$ be a weak solution of \eqref{eq:kinetic-rescaled} satisfying
	\[
		\sup_{\gamma\ge 1}\sup_{t\in[0,T]}\|\nabla_x \mu_t^\gamma\|_{L_H^2} <\infty\,.
	\]
	If $(\rho_t)_{t\in[0,T]}$ is the $\scrE$-regular solution of \eqref{eq:diffusion}.
%
%
	Then,
	\[
		\Ent(\bar\rho_t^\gamma|\rho_t) \le \lt(\Ent(\bar\rho_0^\gamma|\rho_0) + \frac{\beta t}{4\gamma^2}\rt) e^{(\lambda/4) t}\qquad\text{for all $t\in[0,T]$\,,}
	\]
	for some appropriate constants $\lambda,\beta>0$, independent of $\gamma\ge 1$,
%
\end{proposition}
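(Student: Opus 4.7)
The plan is to reduce the statement to a linear Gr\"onwall-type differential inequality
\[
\frac{d}{dt}\Ent(\bar\rho_t^\gamma|\rho_t) \le \frac{\lambda}{4}\,\Ent(\bar\rho_t^\gamma|\rho_t) + \frac{\beta}{4\gamma^2},
\]
from which the advertised estimate follows by the integrating-factor trick: multiplying both sides by $e^{-(\lambda/4)t}$, integrating from $0$ to $t$, and using $e^{-(\lambda/4)s}\le 1$ on the right-hand side gives $\Ent(\bar\rho_t^\gamma|\rho_t)\,e^{-(\lambda/4)t} \le \Ent(\bar\rho_0^\gamma|\rho_0) + \beta t/(4\gamma^2)$, which is exactly what is claimed.

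To derive the differential inequality, I first rewrite \eqref{eq:aux-rho} as a nonlinear Fokker--Planck equation
\[
\partial_t\bar\rho_t^\gamma = \Delta\bar\rho_t^\gamma - \nabla\cdot\bigl(\bar\rho_t^\gamma\bar u_t^\gamma\bigr),\qquad \bar u_t^\gamma = \sfF(\cdot,\bar\rho_t^\gamma) + \mathsf{e}_t^\gamma,
\]
with $\mathsf{e}_t^\gamma = d\bar\jmath_t^\gamma/d\bar\rho_t^\gamma - \sfF(\cdot,\bar\rho_t^\gamma)$ the error field of Lemmas~\ref{lem_ee} and~\ref{lem_wreg0}, and then invoke the relative entropy dissipation identity of~\cite[Theorem~2.18]{DLPS18} applied to the pair $(\bar\rho^\gamma,\rho)$:
\[
\frac{d}{dt}\Ent(\bar\rho_t^\gamma|\rho_t) = -\intr \bar\rho_t^\gamma\lt|\nabla\log\frac{\bar\rho_t^\gamma}{\rho_t}\rt|^2 d\leb^d + \intr \bar\rho_t^\gamma\bigl(\bar u_t^\gamma-\sfF(\cdot,\rho_t)\bigr)\cdot\nabla\log\frac{\bar\rho_t^\gamma}{\rho_t}\,d\leb^d.
\]
Using Young's inequality $xy\le x^2 + y^2/4$ to absorb the relative Fisher information, this collapses to $\tfrac{d}{dt}\Ent(\bar\rho_t^\gamma|\rho_t)\le\tfrac{1}{4}\|\bar u_t^\gamma-\sfF(\cdot,\rho_t)\|_{L^2(\bar\rho_t^\gamma)}^2$. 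The drift mismatch splits as $\bar u_t^\gamma - \sfF(\cdot,\rho_t) = \mathsf{e}_t^\gamma - \nabla K\star(\bar\rho_t^\gamma - \rho_t)$, so $|a-b|^2\le 2|a|^2+2|b|^2$ reduces everything to estimating two pieces. The first, $\|\mathsf{e}_t^\gamma\|_{L^2(\bar\rho_t^\gamma)}^2$, is controlled by Lemmas~\ref{lem_ee} and~\ref{lem_wreg0} together with the hypothesis $\sup_{\gamma,t}\{\|\nabla_x\mu_t^\gamma\|_{L_H^2} + \intrr|v|^2\,d\mu_t^\gamma\} < \infty$, yielding an $O(\gamma^{-2})$ bound; the second, $\|\nabla K\star(\bar\rho_t^\gamma-\rho_t)\|_{L^\infty}^2$, is handled by Young's convolutional inequality, $\|\nabla K\star(\bar\rho_t^\gamma-\rho_t)\|_{L^\infty}\le \|\nabla K\|_{L^\infty}\|\bar\rho_t^\gamma-\rho_t\|_{L^1}$, combined with the Csisz\'ar--Kullback--Pinsker inequality $\|\bar\rho_t^\gamma-\rho_t\|_{L^1}^2\le 2\Ent(\bar\rho_t^\gamma|\rho_t)$. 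Assembling the two bounds produces the advertised differential inequality with $\lambda$ depending only on $\|\nabla K\|_{L^\infty}$ and $\beta$ depending on the uniform moment and $L_H^2$-gradient bounds.

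The main technical obstacle is the rigorous justification of the dissipation identity invoked in the second step: a priori, neither the chain rule for $\log(\bar\rho_t^\gamma/\rho_t)$ nor the integrations by parts required to pass from the two PDEs to the identity are self-evident. This is where the regularity built into the hypotheses is crucial: the $\scrE$-regularity of $\rho$ supplies $\int_0^T\intr|\nabla\rho_t|^2/\rho_t\,d\leb^d\,dt<\infty$ and an $L^\infty$-bound on $\sfF(\cdot,\rho_t)$, while the uniform $\|\nabla_x\mu_t^\gamma\|_{L_H^2}$-bound, together with Lemma~\ref{lem:lp-rho-u}, promotes $\bar\rho_t^\gamma$ to $W^{1,2}(\R^d)$; these are precisely the inputs needed to legitimately apply~\cite[Theorem~2.18]{DLPS18} and to ensure that all boundary terms from the integration by parts vanish.
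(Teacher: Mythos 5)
Your proposal follows essentially the same route as the paper: rewrite the intermediate system \eqref{eq:aux-rho} as a Fokker--Planck equation with a perturbed drift, invoke \cite[Theorem~2.18]{DLPS18} to get the entropy inequality with the $\frac14\|\cdot\|_{L^2(\bar\rho_t^\gamma)}^2$ penalty, split the drift mismatch into the coarse-graining error $\mathsf{e}_t^\gamma$ (controlled via Lemmas~\ref{lem_ee} and~\ref{lem_wreg0} to be $O(\gamma^{-2})$) plus $\nabla K\star(\bar\rho_t^\gamma-\rho_t)$ (controlled via $\|\nabla K\|_{L^\infty}$ and Csisz\'ar--Kullback--Pinsker by the entropy itself), and close with Gr\"onwall. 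The only cosmetic difference is that the paper works with the integrated form of the entropy inequality rather than a differential one; the decomposition, the key lemmas, and the constants are the same.
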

\begin{proof} Under the assumption on $\Phi$ and $K$, the intermediate system \eqref{eq:aux-rho} may be written as
\[
	\partial_t\bar\rho_t^\gamma + \nabla\cdot(\bar\rho_t^\gamma \sfF(\cdot,\rho_t)) - \Delta\bar\rho_t^\gamma = -\nabla\cdot(\bar\rho_t^\gamma h_t^\gamma)\,,
\]
with $h_t^\gamma:=d\bar\jmath_t^\gamma/d\bar\rho_t^\gamma - \sfF(\cdot,\rho_t)$. Note that for any $\xi\in\calC_c^\infty(\R^d)$,
	\begin{align*}
		\intr \xi\cdot \bar\jmath_t^\gamma(dx) = \intrr \langle\xi(x),\sfF(x-v/\gamma,\rho_t^\gamma)\rangle\,(\Gamma^\gamma_\#\mu_t^\gamma)(dxdv) \le \|\xi\|_{L^2(\bar\rho_t^\gamma)}\|\sfF(\cdot,\rho_t^\gamma)\|_{L^2(\rho_t^\gamma)}\,.
	\end{align*}
	Thus, by duality, we find that $\bar\jmath_t^\gamma \ll \bar\rho_t^\gamma$, and the following estimate holds:
	\[
		\|d\bar\jmath_t^\gamma/d\bar\rho_t^\gamma\|_{L^2(\bar\rho_t^\gamma)} \le \|\sfF(\cdot,\rho_t^\gamma)\|_{L^2(\rho_t^\gamma)}\qquad\text{for all $t\in[0,T]$}\,.
	\]
Therefore, we have that
\[
	\|h_t^\gamma\|_{L^2(\bar\rho_t^\gamma)} \le \|\sfF(\cdot,\rho_t^\gamma)\|_{L^2(\rho_t^\gamma)} + \|\sfF(\cdot,\rho_t)\|_{L^2(\bar\rho_t^\gamma)}\qquad\text{for all $t\in[0,T]$\,.}
\]
An application of \cite[Theorem 2.18]{DLPS18} then yields the entropy estimate
\begin{align}\label{eq:entropy-inequality}
	\Ent(\bar\rho_t^\gamma|\rho_t) \le \Ent(\bar\rho_0^\gamma|\rho_0) + \frac{1}{4}\int_0^t \|h_r^\gamma\|_{L^2(\bar\rho_r^\gamma)}^2\,dr\,.
\end{align}
We now provide a more precise estimate of $\|h_t^\gamma\|_{L^2(\bar\rho_t^\gamma)}$. For any $\xi\in\calC_c^\infty(\R^d)$, we have
\begin{align*}
	\intr \xi(x)\,h_t^\gamma(x)\,\bar\rho_t^\gamma(dx) &= -\intrr \xi(x)\Bigl[ \sfF(x,\rho_t) - \sfF(x-v/\gamma,\rho_t^\gamma)\Bigr](\Gamma^\gamma_\#\mu_t^\gamma)(dxdv) \\
	&= -\intr \xi(x)\Bigl[ \sfF(x,\rho_t) - \sfF(x,\bar\rho_t^\gamma)\Bigr]\,\bar\rho_t^\gamma(dx) \\
	&\hspace{2em} - \intrr \xi(x)\Bigl[\sfF(x,\bar\rho_t^\gamma)-\sfF(x-v/\gamma,\rho_t^\gamma)\Bigr]\, (\Gamma^\gamma_\#\mu_t^\gamma)(dxdv) \\
	&=: \mathrm{(I)} + \mathrm{(II)}\,.
\end{align*}
As for (I), we use the estimate
\[
\|\nabla K\star \bar\rho_t^\gamma - \nabla K\star\rho_t\|_{L^\infty} 	\le \|\nabla K\|_{L^\infty} \|\bar\rho_t^\gamma - \rho_t\|_{L^1} \le \sqrt{2}\|\nabla K\|_{L^\infty}\sqrt{\Ent(\bar\rho_t^\gamma|\rho_t)}\,,
\]
to obtain
\[
	|\mathrm{(I)}|\le \sqrt{2}\|\nabla K\|_{L^\infty}\|\xi\|_{L^2(\bar\rho_t^\gamma)}\sqrt{\Ent(\bar\rho_t^\gamma|\rho_t)}\,.
\]
In the former estimate, we used the well-known Csisz\`{a}r--Kullback--Pinsker inequality:
\[
	\|\mu - \nu\|_{L^1}\le \sqrt{2\Ent(\mu,\nu)}\qquad\text{for any $\mu,\nu\in L^1(\R^d)$\,.}
\]
As for the second term, we apply Lemma \ref{lem_wreg0} to obtain
$$\begin{aligned}
|\sfF(x-v/\gamma,\rho_t^\gamma)-\sfF(x,\bar\rho_t^\gamma)|^2 
&\leq 2\frac{\|\nabla \Phi\|_{\Lip}^2}{\gamma^2}|v|^2 + \frac{4 c_0^2}{\gamma^2}\lt(1+ |v|^2\rt)\|\nabla_x \mu^\gamma\|_{L_H^2}^2\cr
&\leq \frac{c^2}{\gamma^2} (1 + |v|^2)\,,
\end{aligned}$$
for some constant $c>0$ independent of $\gamma\ge 1$. Consequently, we obtain
$$\begin{aligned}
|\mathrm{(II)}| & \leq \|\xi\|_{L^2(\bar\rho_t^\gamma)}\sqrt{\iint_{\R^d \times \R^d} |F(x,\bar\rho_t^\gamma)-F(x-v/\gamma,\rho_t^\gamma)|^2\, (\Gamma^\gamma_\#\mu_t^\gamma)(dxdv)}\cr
&\leq \frac{c}{\gamma}\|\xi\|_{L^2(\bar\rho_t^\gamma)}\sqrt{1 + \intrr |v|^2\,\mu_t^\gamma(dxdv)}\,,
\end{aligned}$$
which altogether, yields,
\[
	\|h_t^\gamma\|_{L^2(\bar\rho_t^\gamma)}^2 \le \lambda\Ent(\bar\rho_t^\gamma|\rho_t) + \frac{\beta}{\gamma^2}\,,
\]
for appropriate constants $\lambda,\beta>0$ independent of $\gamma\ge 1$ and $t\in[0,T]$\,. Inserting this into the entropy inequality \eqref{eq:entropy-inequality} and applying Gr\"onwall's inequality then yields the desired result.
\end{proof}

\begin{proof}[Proof Theorem \ref{thm_wreg}] We first notice that the bounded Lipschitz distance between $\mu$ and $\nu$ can be bounded either by the $1$-Wasserstein distance between them or the $L^1$-norm of $\mu - \nu$. This, together with the monotonicity of $\d_p$-distance and the Csisz\`{a}r--Kullback--Pinsker inequality, yields
$$\begin{aligned}
\d_{\text{BL}}(\rho^\gamma_t, \rho_t) &\leq \d_{\text{BL}}(\rho^\gamma_t, \bar\rho_t^\gamma) + \d_{\text{BL}}(\bar\rho^\gamma_t, \rho_t)\cr
&\leq \d_{1}(\rho^\gamma_t, \bar\rho_t^\gamma) + \|\bar\rho_t^\gamma - \rho_t\|_{L^1}\cr
&\leq \d_2(\rho^\gamma_t, \bar\rho_t^\gamma) + \sqrt{2 \Ent(\bar\rho_t^\gamma|\rho_t)}.
\end{aligned}$$
We finally use Lemma \ref{lem_rho2} and Proposition \ref{prop_re} to conclude 
\[
	\d_{\text{BL}}^2(\rho^\gamma_t, \rho_t) \leq C\lt(\Ent(\bar\rho_0^\gamma|\rho_0) + \frac{1}{\gamma^2}\rt)\qquad\text{for all $t\in[0,T]$\,,}
\]
where $C>0$ is independent of $\gamma\ge 1$.
\end{proof}

\appendix

%
%
%
%
\section{Well-posedness for the kinetic equation} \label{app:WP}
In this appendix, we prove the well-posedness of our main equation in the solution space $W^{1,p}_{x,H}(\R^d\times\R^d)$ and simultaneously justify the uniform-in-$\gamma$ estimates obtained in Section~\ref{sec:uniform-gamma}. 

Let us begin by recalling our main equation:
\bq\label{app_kin}
\pa_t \mu_t + \gamma\,v \cdot \nabla_x \mu_t + \gamma\,\sfF(\cdot,\rho_t) \cdot \nabla_v \mu_t = \gamma^2\nabla_v \cdot (\nabla_v \mu_t + v\mu_t),
\eq
where $\sfF(\cdot,\rho_t)  = -\nabla \Phi - \nabla K \star \rho_t$, with $\rho_t = \pi^x_\#\mu_t$ being the $x$-marginal of $\mu_t$.

As mentioned in the introduction, there is available literature on the existence theory for weak solutions to the equation \eqref{app_kin} when the interaction potential $K$ is bounded and Lipschitz continuous or it has the form of $K(x) = 1/|x|^\alpha$ with $\alpha \in (0,d-2]$. The existence of classical solutions is also well studied for the equation \eqref{app_kin} without the potential $\Phi$ when $\pm\Delta K =  \delta_0$. On the other hand, we have been unable to find the proper literature that provides the well-posedness for our main equation; the case $K(x) = 1/|x|^\alpha$ with $\alpha \in (0,d-2]$ 
and $\Phi$ satisfying the assumptions presented in Section \ref{sec:assumption-results}. For that reason, we provide a well-posedness theory for \eqref{app_kin}, which is summarized in the following theorem.

\begin{theorem}\label{thm:main-app}
	Let $T>0$, $k\in\{0,1\}$ and the potentials $\Phi$ and $K$ satisfy $(\textrm{\bf A}_\Phi)$ and
	\begin{align}\label{thm:eq:main-app}
		\|\nabla K\|_{L^q(B_R)} + \|\nabla K\|_{L^\infty(\R^d\setminus B_R)} <\infty\qquad\text{for some $R>0$ and $q\in(1,\infty]$\,.}
	\end{align}
	If the initial data $\mu_0^\gamma$, $\gamma\ge 1$ satisfies
	\[
		\sup_{\gamma\ge 1} \left\{\Ent(\mu_0^\gamma|\scrN^{2d}) + \|\mu_0^\gamma\|_{W^{k,p}_{x,H}}\right\} < \infty\,,
	\]
	with $H(x,v) = \Phi(x) + \frac{1}{2}|v|^2$ and $p\ge \max\{2,q/(q-1)\}$, then there exists a positive time $T_p\in(0,T]$ and a weak solution $\mu^\gamma\in \calC([0,T_p];\calP(\R^d\times\R^d))$ of \eqref{app_kin} with
	\[
		\sup\nolimits_{t\in[0,T_p]} \Bigl( \Ent(\mu_t^\gamma|\scrN^{2d}) + \|\mu_t^\gamma\|_{W^{k,p}_{x,H}}\Bigr) < \infty\qquad\text{uniformly in $\gamma\ge 1$\,.}
	\]
	
	In the case $k=1$, the weak solution $\mu^\gamma$ is unique.
\end{theorem}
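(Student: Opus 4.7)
The plan is to construct solutions by regularizing the singular interaction potential $K$, applying the uniform-in-$\gamma$ a priori estimates already established in Section~\ref{sec:uniform-gamma} to the regularized problem, and then passing to the limit.

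First, I would mollify: set $K^\e := K\star \psi_\e$ for a symmetric smooth mollifier $\psi_\e$. Then $\nabla K^\e$ is globally bounded and Lipschitz for each fixed $\e>0$, while the hypothesis \eqref{thm:eq:main-app} on $K$ is preserved uniformly in $\e$ (up to a mild enlargement of $R$) for both $\|\nabla K^\e\|_{L^q(B_{2R})}$ and $\|\nabla K^\e\|_{L^\infty(\R^d\setminus B_R)}$, and similarly (using Lemma~\ref{lem_klip}) for $\|\nabla^2 K^\e\|_{L^\infty(\R^d \setminus B_R)}$ under the $k=1$ assumption, since $\|\nabla^2K^\e\|_{L^\infty(\R^d\setminus B_R)}$ is controlled by $\|\nabla K\|_{L^q(B_{2R})}$ plus $\|\nabla K\|_{L^\infty(\R^d\setminus B_{R/2})}$ via a convolution estimate. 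With this regularity, standard theory for the nonlinear McKean process \eqref{mckean} (cf.\ \cite{BCC11,Sznitman1991}, or the linear PDE approach of \cite{Bou93,Degond86,VO90}) yields a unique smooth global-in-time solution $\mu^{\gamma,\e}$ of \eqref{app_kin} with $K$ replaced by $K^\e$.

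Second, I would invoke the a priori estimates of Section~\ref{sec:uniform-gamma} applied to $\mu^{\gamma,\e}$: Lemma~\ref{lem:g-estimate} and Remark~\ref{rmk:2m} give the uniform relative entropy bound, while Proposition~\ref{lem:regularity-summary}, combined with Remark~\ref{rem:singular-K} (for $k=0$) or Lemma~\ref{lem_klip} (for $k=1$), furnishes a bound on $\mu^{\gamma,\e}$ in $L^\infty([0,T_p];W^{k,p}_{x,H})$ uniform in both $\gamma\ge 1$ and $\e>0$. The local time $T_p\in (0,T]$ arises from the quadratic differential inequality in the proof of Lemma~\ref{Lp-bounds-singular}, depending only on the uniform initial bound $\sup_{\gamma\ge 1}\|\mu_0^\gamma\|_{W_{x,H}^{k,p}}$ and on the constants provided by the hypothesis \eqref{thm:eq:main-app}. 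Equation \eqref{app_kin} then provides a uniform bound on $\partial_t\mu^{\gamma,\e}$ in an appropriate weighted negative-order Sobolev space, which together with the spatial bounds yields compactness via Aubin--Lions--Simon. Extracting a subsequence gives a limit $\mu^\gamma \in \calC([0,T_p];\calP(\R^d\times\R^d))\cap L^\infty([0,T_p];W^{k,p}_{x,H})$.

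The main obstacle is passing to the limit in the nonlinear drift term $(\nabla K^\e \star \rho^{\gamma,\e}) \cdot \nabla_v \mu^{\gamma,\e}$. For this I would exploit strong precompactness of $\rho^{\gamma,\e}$ in $L^p([0,T_p]\times\R^d)$ (and in $L^\infty$ when $p>d$ via Morrey's inequality, cf.\ Lemma~\ref{lem:lp-rho-u}) and decompose $\nabla K^\e = (\chi_{2R}\nabla K^\e) + ((1-\chi_{2R})\nabla K^\e)$ as in Remark~\ref{rem:singular-K}: the far-field part converges in $W^{1,\infty}$, while the near-singular part is handled by the $L^q$--$L^{q'}$ duality granted by $p\ge q'$. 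Together with weak convergence of $\nabla_v \mu^{\gamma,\e}$, this yields convergence of the drift in the sense of distributions, and $\mu^\gamma$ is identified as a weak solution in the sense of Definition~\ref{def:weak-solution}; the uniform bounds pass to the limit by lower semicontinuity. Finally, for uniqueness in the case $k=1$: given two weak solutions $\mu_1^\gamma,\mu_2^\gamma$ with the same initial datum enjoying the $W^{1,p}_{x,H}$ bound, Lemma~\ref{lem_klip} ensures that $\sfF(\cdot,\rho_i^\gamma)$ is bounded and Lipschitz, uniformly on $[0,T_p]$. A standard Gr\"onwall argument at the level of the McKean SDE \eqref{mckean}---or equivalently a stability estimate in the 2-Wasserstein distance based on the Lipschitz drift---then forces $\mu_1^\gamma = \mu_2^\gamma$ on $[0,T_p]$.
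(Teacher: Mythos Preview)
Your overall plan---mollify, apply the Section~\ref{sec:uniform-gamma} estimates uniformly, extract a limit, pass to the limit in the nonlinearity---is exactly the paper's strategy. Two differences are worth noting. First, the paper regularizes both $\Phi$ and $K$ (and approximates $\mu_0$ by $\calC_c^\infty$ data) so as to invoke the classical smooth theory of \cite{VO90} and make every computation of Section~\ref{sec:uniform-gamma} rigorous; your choice to mollify only $K$ is defensible since $\nabla\Phi$ is already Lipschitz, but you should say explicitly which existence result you are appealing to and why it delivers enough regularity for the $W^{k,p}_{x,H}$ energy identities. Second, and more substantively, the paper does \emph{not} obtain strong compactness of $\rho^\e$ via Aubin--Lions: instead it proves $L^1$ equicontinuity of translates by writing $\rho^\e_t$ through the Green function of the \emph{linear} kinetic Fokker--Planck operator and applying Kolmogorov--Riesz--Fr\'echet (Lemma~\ref{lem:rho_conv}).

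This last point is where your proposal has a genuine gap for $k=0$. Your ``spatial bounds'' on $\mu^{\gamma,\e}$ are only in $L^p_H$ (no $x$- or $v$-derivatives), and the implied bound on $\rho^{\gamma,\e}$ is only in $L^p(\R^d)$ (Lemma~\ref{lem:lp-rho-u} with $k=0$). There is no compact embedding available, so Aubin--Lions--Simon does not produce strong convergence of $\rho^{\gamma,\e}$; yet you need it to pass to the limit in the bilinear term $(\nabla K^\e\star\rho^{\gamma,\e})\mu^{\gamma,\e}$. The paper's Green-function argument is precisely the device that manufactures the missing spatial equicontinuity of $\rho^\e$ from the parabolic smoothing of the kinetic operator, without any Sobolev control on $\rho^\e$. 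For $k=1$ your Aubin--Lions route for $\rho^{\gamma,\e}$ is viable, since then $\rho^{\gamma,\e}$ is uniformly bounded in $W^{1,p}(\R^d)$ and the continuity equation controls $\partial_t\rho^{\gamma,\e}$; but to cover $k=0$ you must either supply an argument along the lines of Lemma~\ref{lem:rho_conv} or invoke some other hypoelliptic/velocity-averaging mechanism.
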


\smallskip

We begin our treatise by regularizing the potentials, $\Phi^\e = \Phi \star \delta_\e$ and $K^\e = K \star \delta_\e$ with the standard mollifier $\delta_\e \in \calC_c^\infty(\R^d)$, $\e > 0$. Since $\nabla \Phi^\e, \nabla K^\e \in \calC_b^\infty(\R^d)$ and the term $\nabla \Phi^\e \cdot \nabla_v \mu_t$ is linear, one can employ the well-posedness theory from \cite{VO90} to construct classical solutions to the following regularized equation:
\bq\label{app_rkin}
\pa_t \mu^\e_t + \gamma\,v \cdot \nabla_x \mu^\e_t + \gamma\,\sfF^\e(\cdot,\rho^\e_t) \cdot \nabla_v \mu^\e_t = \gamma^2\nabla_v \cdot (\nabla_v \mu^\e_t + v\mu^\e_t),
\eq
where $\sfF^\e(\cdot,\rho_t)  = -\nabla \Phi^\e - \nabla K^\e \star \rho_t$. More precisely, we have the following theorem (adapted from \cite[Theorem III.2]{VO90}):
\begin{theorem}\label{thm_ext_rkin} Let $T>0$ and $\e>0$. Suppose that the initial data $\mu_0^\e$ satisfies
\[
\mu_0^\e\in \mc_c^\infty(\R^d \times \R^d)\,.
\]
Then there exists a unique classical solution to the regularized equation \eqref{app_rkin} on the time interval $[0,T]$ satisfying
\begin{align}\label{thm_ext_rkin:est}
(1 + |x|^2 + |v|^2)\bigl(|\mu_t^\e| + |\nabla \mu_t^\e| \bigr) \in (L^1\cap L^\infty)(\R^d\times\R^d)\qquad\text{for all $t \in [0,T]$.}
\end{align}
\end{theorem}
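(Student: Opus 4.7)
The plan is to construct $\mu^\e$ via a standard McKean--Vlasov fixed-point argument, exploiting that after mollification the coefficients are smooth enough for classical PDE/SDE theory to apply directly, and then to extract the weighted bounds \eqref{thm_ext_rkin:est} from Duhamel representations against the Kolmogorov kernel.

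Because of the mollification, $(\textrm{\bf A}_\Phi^1)$ yields $\nabla\Phi^\e \in \calC^\infty(\R^d)$ with at most linear growth and globally bounded Lipschitz constant, while assumption \eqref{thm:eq:main-app} combined with mollification gives $\nabla K^\e \in \calC_b^\infty(\R^d)$ with all derivatives bounded (indeed $\e$-dependently). Hence $x \mapsto \sfF^\e(x,\sigma)$ is smooth with controlled growth uniformly in $\sigma$, and $\sigma \mapsto \sfF^\e(\cdot,\sigma)$ is Lipschitz in the bounded-Lipschitz distance. I would then set up the fixed-point map $\Psi:\sigma \mapsto \mathrm{Law}(X^{\e,\sigma})$, where $(X^{\e,\sigma},V^{\e,\sigma})$ solves the linear Langevin SDE with frozen drift $\sfF^\e(\cdot,\sigma_t)$, and close it on a short time interval in the bounded-Lipschitz metric on $\calC([0,T_*];\calP_1(\R^d))$. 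Uniform-in-time second-moment bounds, which follow from the linear growth of the drift and an elementary Gronwall argument on $\intrr(|x|^2+|v|^2)\mu^\e_t(dxdv)$, allow iteration to the full interval $[0,T]$ and produce the unique nonlinear strong solution whose time-marginal density is $\mu^\e$.

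For classical regularity I would invoke Hörmander's hypoellipticity theorem applied to the linear operator obtained after freezing $\rho^\e_t$: the vector fields $\pa_{v_i}$ together with the commutators $[\pa_{v_i},\gamma\,v\cdot\nabla_x] = \gamma\,\pa_{x_i}$ span $\R^{2d}$, so the associated semigroup admits a smooth Kolmogorov-type transition density with Gaussian upper bounds. Representing $\mu^\e_t$ via Duhamel against this kernel, starting from the smooth compactly-supported datum $\mu_0^\e$, propagates $\calC^\infty$ regularity in space-time. The $L^1$-part of \eqref{thm_ext_rkin:est} is obtained by testing \eqref{app_rkin} against $(1+|x|^2+|v|^2)^2$, integrating by parts, and closing with Gronwall's lemma, where the only non-trivial contribution comes from the drift and is controlled by $(\textrm{\bf A}_\Phi^1)$ together with $\|\nabla K^\e\|_{L^\infty}<\infty$.

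The principal obstacle lies in the weighted $L^\infty$ estimate for $(1+|x|^2+|v|^2)|\nabla\mu^\e_t|$. To handle it I would differentiate \eqref{app_rkin} in $(x,v)$: the derivative $\pa_{x_i}\mu^\e$ satisfies the same linear transport-diffusion equation with an additional source $\gamma\bigl(\pa_{x_i}\sfF^\e(\cdot,\rho^\e_t)\bigr)\cdot\nabla_v\mu^\e$, whose coefficient is uniformly bounded in $(x,t)$ thanks to $(\textrm{\bf A}_\Phi^1)$ and the boundedness of $\nabla^2 K^\e$, and analogously for $\pa_{v_i}\mu^\e$. Running the Duhamel representation for this inhomogeneous linear system against the Gaussian-type Kolmogorov kernel---whose polynomial weighted moments in $(x,v)$ are finite---absorbs the factor $1+|x|^2+|v|^2$ and, combined with the compact support and smoothness of $\mu_0^\e$, delivers the claimed $L^1\cap L^\infty$ control. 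Uniqueness in this regularity class follows from a standard Gronwall estimate on the $L^2$-norm of the difference of two candidate solutions, using the Lipschitz dependence of $\sfF^\e$ in both arguments.
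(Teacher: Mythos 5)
The paper does not actually prove this theorem: it is presented as an adaptation of \cite[Theorem~III.2]{VO90}, the only supporting argument being the observation that mollification renders $\nabla\Phi^\e$ and $\nabla K^\e$ smooth with controlled ($\e$-dependent) bounds, so that \eqref{app_rkin} falls within the scope of the cited existence theory; hypoellipticity is invoked only afterwards, in Remark~\ref{rem:hypoelliptic}, to upgrade to $\calC^\infty$ regularity. Your proposal is therefore a genuinely different route: a self-contained construction via a McKean--Vlasov fixed point in the bounded-Lipschitz metric, classical/hypoelliptic regularity, and weighted Duhamel estimates against the Kolmogorov kernel. Your reading of the hypotheses is accurate (linear growth and global Lipschitz bound for $\nabla\Phi^\e$ from ($\textbf{A}_\Phi^1$), boundedness of $\nabla K^\e$ and all its derivatives from \eqref{thm:eq:main-app} after mollification), and the fixed-point, moment and uniqueness parts are standard. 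What the paper's route buys is brevity at the price of a black box; what yours buys is transparency, but it obliges you to actually establish the weighted bound \eqref{thm_ext_rkin:est}, which is precisely the technical content of \cite{VO90}.

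On that point there is a soft spot. For the weighted $L^\infty$ control you propose to iterate Duhamel and let the Gaussian moments of the kernel ``absorb'' the weight $w=1+|x|^2+|v|^2$. As written this does not close: the Duhamel source is $\sfF^\e(\cdot,\rho^\e_t)\,\mu^\e_t$ (resp.\ $\sfF^\e\cdot\nabla_v\mu^\e_t$ plus lower-order terms for the derivatives), and since $\nabla\Phi^\e$ grows linearly one has $w\,|\sfF^\e|\,|\mu^\e|\lesssim w^{3/2}|\mu^\e|$, so bounding $w\mu^\e_t$ requires control of $w^{3/2}\mu^\e_s$ for $s<t$ --- the iteration leaves the fixed polynomial-weight space at every step. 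The standard repair is either a weighted maximum principle applied directly to $w\mu^\e$ and $w\nabla\mu^\e$, where the drift enters only through the bounded coefficient $\sfF^\e\cdot\nabla_v w/w$ (bounded because $|v|(1+|x|)\lesssim 1+|x|^2+|v|^2$), or the propagation of Gaussian/exponential weights from the compactly supported datum; either should be spelled out. A second, minor point: to apply H\"ormander's theorem to the time-dependent operator you also need smoothness of $t\mapsto\sfF^\e(\cdot,\rho^\e_t)$, which must be bootstrapped from the equation rather than assumed. Neither issue is fatal, but both are exactly the places where a referee would push back --- or where one would, as the authors do, simply cite \cite{VO90}.
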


\begin{remark}\label{rem:hypoelliptic}
	As a consequence of Theorem~\ref{thm_ext_rkin} and the hypoelliptic property of \eqref{app_kin} (cf.\ \cite{Hormander:1967aa}), we easily deduce that $\mu_t^\e\in \calC^\infty(\R^d\times\R^d)$ for all $t\in(0,T)$.		
\end{remark}

\subsection{Uniform in $\e$ and $\gamma$ estimates}\label{app:uniform-gamma} 

\subsubsection{Uniform moment estimates} Our first result provides moment estimates for the solution of \eqref{app_rkin} that are uniform in $\e$ and $\gamma$, and is a restatement of Lemma~\ref{lem:g-estimate}. Recall the coarse-graining map $\Gamma^\gamma(x,v)=(x+v/\gamma,v)$ and its inverse $\Gamma^{-\gamma}(\zeta,v) = (\zeta-v/\gamma,v)$ defined in Section~\ref{sec:IS}. 

\begin{theorem}\label{thm:entropy-linear}
	Let $\e>0$, $\gamma\ge 1$ and let $\mu_0^\e\in \mc_c^\infty(\R^d \times \R^d)$ satisfy
	\[
		\Ent(\mu_0^\e|\Gamma^{-\gamma}_\#\scrN^{2d}) <\infty\,,
	\]
	 and let $(\mu_t^\e)_{t\ge 0}\subset \calP(\R^d\times\R^d)$ be a solution of \eqref{eq:kinetic-rescaled} satisfying \eqref{thm_ext_rkin:est}. Further, let
	 \[
	 	\sup\nolimits_{t\in [0,T]}|\sfF^\e(x,\rho_t^\e)| \le c_\Phi(1 +|x|) + c_0
	 \]
	 for some constants $c_\Phi,c_0>0$, independent of $\e>0$ and $\gamma>0$. Then for every $t\in[0,T]$,
	\begin{align}\label{eq:entropy-linear}
		\Ent(\mu_t^\e|\Gamma^{-\gamma}_\#\scrN^{2d}) + \alpha\gamma^2\int_0^t\mathcal{D}(\mu_r^\e)\,dr \le M(\mu_0^\gamma,t)
	\end{align}    
	with $M(\mu_0^\e,t) = \Ent(\mu_0^\e|\Gamma^{-\gamma}_\#\scrN^{2d})e^{\lambda t} + (\beta/\lambda)(e^{\lambda t}-1)$ for some constants $\alpha,\beta,\lambda>0$ that are independent of $\e>0$ and $\gamma \ge 1$, and where
	\[
		\mathcal{D}(\mu) := \begin{cases}
			\iint_{\{\mu>0\}} \frac{|\nabla_v \mu + v\mu|^2}{\mu}\,d\leb^{2d} &\text{if $\nabla_v \mu + v\mu \ll \mu$\,,} \\
			+\infty &\text{otherwise.}
		\end{cases}
	\]
	In particular, we have that $\nabla_v \mu_t^\e + v\mu_t^\e \ll \mu_t^\e$ for almost every $t\in(0,T)$. 
\end{theorem}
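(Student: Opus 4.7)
The plan is to differentiate the relative entropy $t\mapsto \Ent(\mu_t^\e|\Gamma^{-\gamma}_\#\scrN^{2d})$ along the regularized equation~\eqref{app_rkin}, isolate the Fisher-information-like dissipation $\gamma^2\mathcal{D}$, and close the resulting inequality via Gr\"onwall using the second-moment control from Remark~\ref{rmk:2m}. The regularity~\eqref{thm_ext_rkin:est} together with the hypoellipticity noted in Remark~\ref{rem:hypoelliptic} justifies all integrations by parts that appear below.

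Explicitly, the density of $\Gamma^{-\gamma}_\#\scrN^{2d}$ is $n^\gamma(x,v)=(2\pi)^{-d}\exp(-\tfrac12|x+v/\gamma|^2-\tfrac12|v|^2)$. A standard entropy differentiation gives $\tfrac{d}{dt}\Ent(\mu_t^\e|n^\gamma)=\intrr(\partial_t\mu_t^\e)\log(\mu_t^\e/n^\gamma)\,dxdv$. After substituting~\eqref{app_rkin} and integrating by parts, the $\log\mu_t^\e$ contributions from the transport and drift terms vanish (by mass conservation and the $v$-independence of $\sfF^\e$, respectively), while the diffusion term can be rewritten as $-\gamma^2\mathcal{D}(\mu_t^\e)-\intrr(\nabla_v\mu_t^\e+v\mu_t^\e)\cdot(\gamma x+v)\,dxdv$ using the identity $\gamma^2\nabla_v\log(\scrN^d(v)/n^\gamma)=\gamma x+v$. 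A final integration by parts shows that this lower-order piece combines with the transport contribution $-\gamma\intrr(v\cdot\nabla_x\mu_t^\e)\log n^\gamma\,dxdv$ to produce only the constant $d$, leaving
\[
\frac{d}{dt}\Ent(\mu_t^\e|\Gamma^{-\gamma}_\#\scrN^{2d})=-\gamma^2\mathcal{D}(\mu_t^\e)+d+\intrr\mu_t^\e\,\sfF^\e\cdot x\,dxdv+\Bigl(\gamma+\tfrac1\gamma\Bigr)\intrr\mu_t^\e\,\sfF^\e\cdot v\,dxdv.
\]

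The main obstacle is the apparently $\gamma$-divergent velocity cross term. The crucial trick is that $v\mu = \mu\nabla_v\log(\mu/\scrN^d(v))-\nabla_v\mu$, and combined with the $v$-independence of $\sfF^\e$ one obtains $\intrr\mu_t^\e\,\sfF^\e\cdot v\,dxdv=\intrr\sfF^\e\cdot\mu_t^\e\nabla_v\log(\mu_t^\e/\scrN^d(v))\,dxdv$. Cauchy--Schwarz bounds this by $\bigl(\intrr\mu_t^\e|\sfF^\e|^2\,dxdv\bigr)^{1/2}\mathcal{D}(\mu_t^\e)^{1/2}$, and Young's inequality (with $\gamma\ge 1$ so $\gamma+1/\gamma\le 2\gamma$) yields $(\gamma+1/\gamma)|\intrr\mu_t^\e\,\sfF^\e\cdot v\,dxdv|\le \tfrac12\gamma^2\mathcal{D}(\mu_t^\e)+2\intrr\mu_t^\e|\sfF^\e|^2\,dxdv$, absorbing the troublesome $\gamma$-factor into the dissipation. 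The $x$-cross term is controlled directly by Cauchy--Schwarz and the growth bound $|\sfF^\e(x,\rho)|\le c_\Phi(1+|x|)+c_0$, producing an upper bound of the form $C_1(1+\intrr\mu_t^\e|x|^2\,dxdv)$ with $C_1$ independent of $\e$ and $\gamma$.

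To close the estimate I would invoke Remark~\ref{rmk:2m}: since $|x|^2\le 2|x+v/\gamma|^2+2|v|^2$ for $\gamma\ge 1$, applying~\eqref{eq:entropy-moment} to $\nu=\mu_t^\e$ gives $\intrr\mu_t^\e|x|^2\,dxdv\le 8\,\Ent(\mu_t^\e|\Gamma^{-\gamma}_\#\scrN^{2d})+8d\log 2$. Gathering all contributions leads to a differential inequality of the form
\[
\frac{d}{dt}\Ent(\mu_t^\e|\Gamma^{-\gamma}_\#\scrN^{2d})+\tfrac12\gamma^2\mathcal{D}(\mu_t^\e)\le \beta+\lambda\,\Ent(\mu_t^\e|\Gamma^{-\gamma}_\#\scrN^{2d}),
\]
with $\beta,\lambda>0$ independent of $\e$ and $\gamma\ge 1$. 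Choosing $\alpha\in(0,1/2)$ and applying Gr\"onwall's lemma to $X(t):=\Ent(\mu_t^\e|\Gamma^{-\gamma}_\#\scrN^{2d})+\alpha\gamma^2\int_0^t\mathcal{D}(\mu_r^\e)\,dr$, whose derivative satisfies $X'(t)\le\beta+\lambda X(t)$, yields the announced estimate~\eqref{eq:entropy-linear}. Finally, the absolute continuity statement $\nabla_v\mu_t^\e+v\mu_t^\e\ll\mu_t^\e$ for almost every $t$ follows from the finiteness of $\int_0^T\mathcal{D}(\mu_r^\e)\,dr$, since $\mathcal{D}(\mu)=+\infty$ whenever this absolute continuity fails.
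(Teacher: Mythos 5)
Your proposal is correct and follows essentially the same route as the paper's proof in Appendix~\ref{app:uniform-gamma}: differentiate $\Ent(\mu_t^\e|\Gamma^{-\gamma}_\#\scrN^{2d})$, extract the dissipation $-\gamma^2\mathcal{D}$, absorb the order-$\gamma$ velocity cross term $\gamma\intrr \sfF^\e\cdot(\nabla_v\mu_t^\e+v\mu_t^\e)\,d\leb^{2d}$ into the dissipation by Cauchy--Schwarz and Young, control the remaining terms by $\intrr |x|^2\,d\mu_t^\e$, convert that moment back into the relative entropy via the Donsker--Varadhan bound \eqref{eq:entropy-moment}, and close with Gr\"onwall. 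Your algebra checks out, including the identity $\intrr \mu\,\sfF^\e\cdot v = \intrr \sfF^\e\cdot(\nabla_v\mu+v\mu)$ (using $v$-independence of $\sfF^\e$) and the cancellation of the $\gamma\intrr \mu\, v\cdot x$ contributions between the transport and diffusion terms.

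The one point where you diverge from the paper is that you carry out only the formal computation and dismiss its justification in a single sentence, whereas this justification is where the paper's proof spends almost all of its effort. The hypothesis \eqref{thm_ext_rkin:est} gives integrability of $(1+|x|^2+|v|^2)(|\mu_t^\e|+|\nabla\mu_t^\e|)$, but it does not directly yield finiteness of the intermediate integrals involving $\log\mu_t^\e$ (which may grow like $|x|^2+|v|^2$ where $\mu_t^\e$ decays Gaussianly), nor the vanishing of boundary terms in your integrations by parts. The paper resolves this by working with the truncated weight $\eta_R=e^{-H\wedge R}$ and the regularized entropy $\psi_\delta(z)=z\log(z+\delta)$, for which every integral is manifestly finite under \eqref{thm_ext_rkin:est}, and only at the end passes $\delta\to0$, $R\to\infty$ via Fatou and dominated convergence. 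If you intend your argument to stand as a proof rather than an a priori estimate, you should either insert such a truncation or supply the missing decay of $\nabla\mu_t^\e\log\mu_t^\e$ against quadratic weights; otherwise the skeleton of your argument is exactly the paper's.
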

\begin{proof}
	For notational simplicity, we will drop the superscript $\e$ within the proof.
	
	Since $(\Gamma^{-\gamma}_\#\scrN^{2d})(x,v) = \scrN^{2d}(x+v/\gamma,v)$, we set $H(x,v) = (|x+v/\gamma|^2 + |v|^2)/2$. For each $R>0$, consider the function $\eta_R(x,v) := \exp(- H\wedge R)\in W^{1,\infty}(\R^d\times \R^d)$, which satisfies $\exp(-R)\le \eta_R \le 1$ and $\nabla\log \eta_R\in L^\infty(\R^d\times\R^d)$. Further, for each $\delta>0$, we set $\psi_\delta(z) = z\log(z + \delta)$, $z\in[0,\infty)$. Note that $\psi_\delta\ge \psi$ for any $\delta>0$. Furthermore, it is immediate to see that $\psi_\delta$ is convex with
	\[
		\psi_\delta'(z) = \log(z +\delta) + \frac{z}{z + \delta}\,, 
		\qquad \psi_\delta''(s) 
		= \frac{1}{z+\delta}\lt( 1 + \frac{\delta}{z+\delta}\rt),\qquad z\ge 0,\;\delta\in(0,1)\,,
	\]
	which consequently implies, for any $0<s<t<T$,
	\[
	    \lt|\intrr \psi_\delta(\mu_t/\eta_R)\,d \eta_R - \intrr \psi_\delta(\mu_s/\eta_R)\,d \eta_R \rt| \le 2(1 + \delta^{-1}) e^R\sup_{r\in[s,t]}\|\partial_r\mu_r\|_{L^\infty} |t-s|\,,
	\]
	and hence the map $(0,T)\ni t\mapsto \intrr \psi_\delta(\mu_t/\eta_R)\,d\eta_R$ is absolutely continuous.
	
	At each point of differentiability $t\in(0,T)$, we have
	\begin{align*}
		&\intrr \psi_\delta(\mu_t/\eta_R)\,d\eta_R - \intrr \psi_\delta(\mu_0/\eta_R)\,d\eta_R \\
		&\hspace{2em}= \gamma\int_0^t\intrr v\cdot \nabla_x \psi_\delta'(\mu_r/\eta_R) \,d\mu_r\,dr - \gamma^2\int_0^t\intrr \nabla_v\psi_\delta'(\mu_r/\eta_R)\cdot[ \nabla_v \mu_r + v \mu_r]\,d\leb^{2d}\,dr\\
		&\hspace{2em}\qquad + \gamma\int_0^t\intrr \nabla_v\psi_\delta'(\mu_r/\eta_R)\cdot \sfF(\cdot,\rho_r)\,d\mu_r\,dr \\
		&\hspace{2em}=: \mathrm{(I)} + \mathrm{(II)} + \mathrm{(III)}\,.
	\end{align*}
	Setting $\Psi_\delta(z) := \int_0^s \zeta\psi_\delta''(\zeta)\,d\zeta$, which satisfies $0\le \Psi_\delta(z) \le z$ and $0\le \Psi_\delta'(z)\le 1$ for all $z\ge 0$, we write
	\[
		(\mu_t/\eta_R)\nabla \psi_\delta'(\mu_t/\eta_R)  = \nabla \Psi_\delta(\mu_t/\eta_R)\,.
	\]
	For the first term $\mathrm{(I)}$, we integrate by parts to obtain
	\[
		\int_0^t\intrr v\cdot \nabla_x \psi_\delta'(\mu_r/\eta_R)\,d\mu_r\,dr = \int_0^t\iint_{\{H<R\}} v\cdot (x+v/\gamma)\Psi_\delta(\mu_r/\eta_R)\,d\eta_R\,dr\,,
	\]
	As for $\mathrm{(II)}$ we write
	\begin{align*}
		\mathrm{(II)} &= -\gamma^2\int_0^t\intrr \psi_\delta''(\mu_r/\eta_R)\,\eta_R^{-1}\,(\nabla_v \mu_r -  \mu_r\nabla_v\log\eta_R)\cdot(\nabla_v \mu_r + v \mu_r)\,d\leb^{2d}\,dr \\
		&= -\gamma^2\int_0^t\intrr \psi_\delta''(\mu_r/\eta_R)\,\eta_R^{-1}|\nabla_v \mu_r + v\mu_r|^2\,d\leb^{2d} \\
		&\qquad+ \gamma^2\int_0^t\iint_{\{H\ge R\}} \Psi_\delta'(\mu_r/\eta_R)\,v\cdot(\nabla_v \mu_r + v\mu_r)\,d\leb^{2d}\,dr \\
		&\qquad-\gamma\int_0^t\iint_{\{H<R\}} \Psi_\delta'(\mu_r/\eta_R)\,(x+v/\gamma)\cdot(\nabla_v \mu_r + v\mu_r)\,d\leb^{2d}\,dr\,.
	\end{align*}
	For term $\mathrm{(III)}$, we reformulate to obtain
	\begin{align*}
		\mathrm{(III)} &= \gamma\int_0^t\intrr \psi_\delta''(\mu_r/\eta_R)\,\eta_R^{-1}\,(\nabla_v \mu_r - \nabla_v\log \eta_R)\cdot \sfF(\cdot,\rho_r)\,d\mu_r\,dr \\
		&= \gamma\int_0^t\intrr \psi_\delta''(\mu_r/\eta_R)\,\eta_R^{-1}\,(\nabla_v \mu_r + v\mu_r)\cdot \sfF(\cdot,\rho_r)\,d\mu_r\,dr \\
		&\qquad- \gamma\int_0^t\iint_{\{H\ge R\}} \Psi_\delta'(\mu_r/\eta_R)\,v\cdot \sfF(\cdot,\rho_r)\,d\mu_r\,dr \\
		&\qquad +\int_0^t\iint_{\{H<R\}} \Psi_\delta'(\mu_r/\eta_R)\,(x+v/\gamma)\cdot \sfF(\cdot,\rho_r)\,d\mu_r\,dr\,.
	\end{align*}
	We now 
	collect the terms into
	\begin{align*}
		\Sigma_{R,\delta}(t) &:=\gamma\int_0^t\iint_{\{H<R\}} v\cdot (x+v/\gamma)\Psi_\delta(\mu_r/\eta_R)\,d\eta_R\,dr \\
		&\qquad-\gamma\int_0^t\iint_{\{H<R\}} \Psi_\delta'(\mu_r/\eta_R)\,(x+v/\gamma)\cdot(\nabla_v \mu_r + v\mu_r)\,d\leb^{2d}\,dr \\
		&\qquad+\int_0^t\iint_{\{H<R\}} \Psi_\delta'(\mu_r/\eta_R)\,(x+v/\gamma)\cdot \sfF(\cdot,\rho_r)\,d\mu_r\,dr\,,
	\end{align*}
	and
	\[
		\Lambda_{R}(t) := 
		\gamma\int_0^t\iint_{\{H\ge R\}} |v||\sfF(\cdot,\rho_r)|\,d\mu_r\,dr\,.
	\]
	As for the other terms, we apply the Young's inequality to deduce
	\begin{align*}
		&\intrr \psi_\delta(\mu_t/\eta_R)\,d\eta_R + c_1'\gamma^2\int_0^t\intrr \psi_\delta''(\mu_r/\eta_R)\,\eta_R^{-1}|\nabla_v \mu_r + v\mu_r|^2\,d\leb^{2d}\,dr \\
		&\hspace{8em}\le \intrr \psi_\delta(\mu_0/\eta_R)\,d\eta_R + \Sigma_{R,\delta}(t) + \Lambda_{R}(t)  \\
		&\hspace{8em}\qquad + c_2'\int_0^t\intrr |\sfF(\cdot,\rho_r)|^2\,d \mu_r\,dr 
		+ c_3'\gamma^2\int_0^t\iint_{\{H\ge R\}} |v|^2\,d\mu_r\,dr\,,
	\end{align*}
	for appropriate constants $c_1',c_2',c_3'>0$ that are independent of $\gamma$. The moment estimates given in \eqref{thm_ext_rkin:est} by Theorem~\ref{thm_ext_rkin} allows us to apply the Lebesgue dominated convergence to obtain
	\[
		\lim_{R\to\infty}\lim_{\delta\to 0} \Sigma_{R,\delta} = \Sigma(t)\,, 
	\]
	with
	\[
		\Sigma(t) = \int_0^t\intrr (x+v/\gamma)\cdot \sfF(\cdot,\rho_r)\,d\mu_r\,dr - \gamma\int_0^t\intrr (x+v/\gamma)\cdot\nabla_v \mu_r\,d\leb^{2d}\,dr\,.
	\]
	Notice that the second term in $\Sigma$ may be integrated by parts to obtain
	\begin{align*}
		\intrr (x+v/\gamma)\cdot \nabla_v\mu_r \,d\leb^{2d} \ge - \frac{d}{\gamma} \,.
	\end{align*}
	Indeed, for any $R>0$, we find
	\begin{align*}
		\iint_{\{|x+v/\gamma|<R\}} (x+v/\gamma)\cdot \nabla_v\mu_r \,d\leb^{2d} 
		&\ge -\frac{d}{\gamma}\mu_r(\{|x+v/\gamma|<R\}) \ge - \frac{d}{\gamma}\,,
	\end{align*}
	since $\mu_r(\{|x+v/\gamma|<R\})\le 1$ for every $r\in(0,t)$. Hence, the previous inequality and 
	\begin{align*}
		\intrr |x+v/\gamma|(c_\Phi|x| + c_0)\,d\mu_r &\le \intrr |x+v/\gamma|(c_\Phi|x+v/\gamma| + (c_\Phi/\gamma)|v| + c_0)\,d\mu_r \\
		&\le c_4\intrr H(x,v)\,d\mu_r +c_5\,,
	\end{align*}
	for some constants $c_4,c_5>0$ independent of $\e>0$ and $\gamma\ge 1$, provide the upper bound
	\begin{align*}
		\Sigma(t) &\le c_5'\,t + c_4\int_0^t\intrr H(x,v)\,d\mu_r\,dr\,, 
	\end{align*}
	with $c_5'=d + c_5$. As for the other terms, we apply Fatou's lemma and the Lebesgue dominated convergence to obtain
	\begin{align*}
		\Ent(\mu_t|\Gamma^{-\gamma}_\#\scrN^{2d}) &\le \liminf_{R\to\infty}\intrr \psi(\mu_t/\eta_R)\,d\eta_R\,,\\
		\Ent(\mu_0|\Gamma^{-\gamma}_\#\scrN^{2d}) &= \lim_{R\to\infty}\lim_{\delta\to 0}\intrr \psi_\delta(\mu_0/\eta_R)\,d\eta_R\,, \\
		\int_0^t\mathcal{D}(\mu_r)\,dr &\le \liminf_{\delta\to 0} \int_0^t\intrr \psi_\delta''(\mu_r/\eta_R)\,\eta_R^{-1}|\nabla_v \mu_r + v\mu_r|^2\,d\leb^{2d}\,dr\,.
	\end{align*}
	Putting the inequalities together yields
	\begin{align*}
		&\Ent(\mu_t|\Gamma^{-\gamma}_\#\scrN^{2d}) + c_1'\gamma^2\int_0^t\mathcal{D}(\mu_r)\,dr \\
		&\hspace{6em}\le \Ent(\mu_0|\Gamma^{-\gamma}_\#\scrN^{2d}) + c_5'\,t + c_4\int_0^t\intrr H(x,v)\,d\mu_r\,dr\,. 
	\end{align*}
	Making use of \eqref{eq:aux-rho} to estimate the second term on the right-hand, we obtain
	\[
		\intrr H(x,v)\,d\mu_r \le 4\Bigl(\Ent(\mu_r|\Gamma^{-\gamma}_\#\scrN^{2d}) + d\log 2\Bigr)\,.
	\]
	In particular, we obtain constants $\alpha,\beta,\lambda>0$ independent of $\e>0$ and $\gamma\ge 1$ such that
	\begin{align*}
		&\Ent(\mu_t|\Gamma^{-\gamma}_\#\scrN^{2d}) + \alpha\gamma^2\int_0^t\mathcal{D}(\mu_r)\,dr \\
		&\hspace{6em}\le \Ent(\mu_0|\Gamma^{-\gamma}_\#\scrN^{2d}) + \lambda\int_0^t \Ent(\mu_r|\Gamma^{-\gamma}_\#\scrN^{2d})\,dr + \beta\,t\,.
	\end{align*}
	By Gr\"onwall's lemma, we futher obtain
    \begin{align}\label{eq:g-estimate-diff-inequality}
        \Ent(\mu_t^\gamma|\Gamma^{-\gamma}_\#\scrN^{2d}) 
        \le M(\mu_0^\gamma,t):=\Ent(\mu_0^\gamma|\Gamma^{-\gamma}_\#\scrN^{2d})e^{\lambda t} + (\beta/\lambda)\bigl( e^{\lambda t}-1\bigr).
    \end{align}
    Therefore, integrating \eqref{eq:g-estimate-diff-inequality} and using the previous inequality yield
    \begin{align*}
       \Ent(\mu_t^\gamma|\Gamma^{-\gamma}_\#\scrN^{2d}) + \alpha\gamma^2\int_0^t \mathcal{D}(\mu_r)\,dr
        \le M(\mu_0^\gamma,t)\,,
    \end{align*}
 	thereby concluding the proof.
	\end{proof}

\begin{remark}
	Notice that in the case when $\nabla\Phi$ has linear growth at infinity and $\nabla K$ is bounded and Lipschitz, the assumption on $\sfF^\e$ in Theorem~\ref{thm:entropy-linear} is trivially satisfied. In particular, the inequality \eqref{eq:entropy-linear} holds also for weak solutions $\mu$ to \eqref{app_rkin}.
\end{remark}

\subsubsection{Uniform $W_{x,H}^{k,p}$-estimates}

Due to the regularity supplied by Theorem~\ref{thm_ext_rkin}, the formal computations made in Section~\ref{sec:uniform-gamma} become rigorous. In particular, we obtain for each $\e>0$ and $\gamma>0$ the following restatement of Proposition~\ref{lem:regularity-summary}:

\begin{lemma}\label{thm:regularity-summary}
	Let $p\in[2,\infty)$, $k\in\{0,1\}$. Further, let $(\mu_0^\e)_{\e>0}\subset \mc_c^\infty(\R^d \times \R^d)$ satisfy
	\[
		\sup_{\e>0}\|\mu_0^\e\|_{W_{x,H^\e}^{k,p}} <\infty\,,\qquad H^\e(x,v) = \Phi^\e(x) + |v|^2/2\,,
	\]
	and $(\mu_t^\e)_{t\in[0,T]}\subset\calP(\R^d\times\R^d)$ the corresponding solutions to \eqref{app_rkin} satisfying \eqref{thm_ext_rkin:est}. If
	\[
		\|\nabla K^\e\star \varrho \|_{W^{k,\infty}} \le c_{K,p}\lt(1 + \|\nu\|_{W_{x,H^\e}^{k,p}}\rt),\qquad \varrho = \pi^x_\#\nu,\qquad \text{$\nu\in W_{x,H^\e}^{k,p}(\R^d\times\R^d)\,,$}
	\]
	for some constant $c_{K,p}>0$ independent of $\e$, then there exists some $T_p\in(0,T]$ such that
	\begin{align}\label{eq:regularity-summary}
		\|\mu_t^\e\|_{W_{x,H^\e}^{k,p}} \le M_k\qquad\text{for almost every $t\in[0,T_p]$\,,}
	\end{align}
	where $M_k>0$ is a constant independent of $\e>0$ and $\gamma>0$.
	
	In particular, we have the estimate
	\[
		\sup\nolimits_{t\in[0,T_p]}\|\nabla K^\e\star \rho_t^\e \|_{W^{k,\infty}} \le c_{K,p}(1 + M_k)\,,
	\]
	independently of $\e>0$ and $\gamma>0$.
\end{lemma}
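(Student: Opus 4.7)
The plan is to observe that Lemma \ref{thm:regularity-summary} is nothing but Proposition \ref{lem:regularity-summary} applied to the regularized solutions $\mu^\e$ of \eqref{app_rkin}, for which the formal $L^p_{H^\e}$ and $W^{1,p}_{x,H^\e}$ computations of Section \ref{sec:uniform-gamma} can be carried out rigorously. Indeed, Theorem \ref{thm_ext_rkin} ensures that $\mu^\e$ and $\nabla_x\mu^\e$ together with $(1+|x|^2+|v|^2)$ belong to $(L^1\cap L^\infty)(\R^d\times\R^d)$ uniformly in time, and Remark \ref{rem:hypoelliptic} upgrades this to smoothness in $(x,v)$. Since $\Phi^\e$ has polynomial growth and $\nabla K^\e\star\rho^\e_t$ is globally bounded by assumption, every boundary contribution in the integration-by-parts computations behind Lemma \ref{lem:A1} vanishes, so that lemma applies to $\mu^\e$ directly with the weight $e^{-H^\e}$.

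For the case $k=0$, the plan is to apply Lemma \ref{lem:A1} with $f=\mu^\e_t$ and source $g^\e_t=(\nabla K^\e\star\rho^\e_t)\,\mu^\e_t$, so that $\zeta^\e_t=(\nabla K^\e\star\rho^\e_t)\,u^\e_t$ with $u^\e_t=\mu^\e_t e^{H^\e}$. Using the hypothesis $\|\nabla K^\e\star\rho^\e_t\|_{L^\infty}\le c_{K,p}(1+\|\mu^\e_t\|_{L^p_{H^\e}})$, this yields the differential inequality
\[
\frac{d}{dt}\|\mu^\e_t\|_{L^p_{H^\e}}\le (p-1)\,c_{K,p}^2\bigl(1+\|\mu^\e_t\|^2_{L^p_{H^\e}}\bigr)\|\mu^\e_t\|_{L^p_{H^\e}}\,,
\]
exactly as in Lemma \ref{Lp-bounds-singular}. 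Since the right-hand side is superlinear, one obtains a uniform bound $M_0$ on a time interval $[0,T_p]$ whose length depends only on $c_{K,p}$ and the initial uniform bound $\sup_\e\|\mu^\e_0\|_{L^p_{H^\e}}$, but neither on $\e$ nor on $\gamma$: the $\gamma^2$-dissipation term in Lemma \ref{lem:A1} lies on the left-hand side and is discarded.

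For the case $k=1$, the plan is to first apply the $k=0$ bound and then differentiate \eqref{app_rkin} in $x_j$, $j=1,\ldots,d$. The function $q^{\e,j}_t:=\partial_{x_j}\mu^\e_t$ then solves the same type of equation as \eqref{app_rkin} with source $g^{\e,j}_t=(\nabla K^\e\star\rho^\e_t)\,q^{\e,j}_t-\partial_{x_j}\sfF^\e(\cdot,\rho^\e_t)\,\mu^\e_t$, both factors being controlled in $L^\infty$ via the $W^{1,\infty}$-hypothesis $\|\nabla K^\e\star\rho^\e_t\|_{W^{1,\infty}}\le c_{K,p}(1+\|\mu^\e_t\|_{W^{1,p}_{x,H^\e}})$ together with $(\textbf{A}_\Phi^1)$. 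Another invocation of Lemma \ref{lem:A1} (as in Lemma \ref{lem_xi}) and Young's inequality then produce a closed differential inequality of the form
\[
\frac{d}{dt}\|\mu^\e_t\|^p_{W^{1,p}_{x,H^\e}}\le C_{K,p}\bigl(1+\|\mu^\e_t\|^p_{W^{1,p}_{x,H^\e}}\bigr)^2\,,
\]
which, combined with the $L^p$-bound already obtained, gives a uniform bound $M_1$ on a possibly smaller time interval $[0,T_p]$.

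The main (and really only) obstacle is tracking uniformity, as the estimates must be independent of both $\e$ and $\gamma$. The $\gamma$-independence is built into Lemma \ref{lem:A1}, since $\gamma$ appears only multiplying the nonnegative dissipation $\|\nabla_v|w|^{p/2}\|_{L^2(\eta)}^2$ on the good side and the source scales linearly in $\gamma$ while the inequality is homogeneous in the choice of $\gamma$ after Young's inequality. The $\e$-independence is guaranteed by the hypothesis that $c_{K,p}$ in the convolution estimate is $\e$-independent, which is precisely the content ensured by mollification of $K$ under \eqref{thm:eq:main-app} via Remark \ref{rem:singular-K} and Lemma \ref{lem_klip}. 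Finally, the bound on $\rho^\e_t$ needed to close the Gr\"onwall estimate follows from Lemma \ref{lem:lp-rho-u}, completing the argument.
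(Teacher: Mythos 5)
Your proposal is correct and follows essentially the same route as the paper: the paper proves this lemma simply by noting that the regularity from Theorem~\ref{thm_ext_rkin} makes the a priori computations of Section~\ref{sec:uniform-gamma} rigorous, and then invokes Proposition~\ref{lem:regularity-summary}, whose proof is exactly the combination of Lemma~\ref{lem:A1} with the sources $g_t=(\nabla K^\e\star\rho^\e_t)\mu^\e_t$ and $g^{j}_t=(\nabla K^\e\star\rho^\e_t)\partial_{x_j}\mu^\e_t-\partial_{x_j}\sfF^\e\,\mu^\e_t$ and the resulting superlinear Gr\"onwall-type inequalities of Lemmas~\ref{Lp-bounds-singular}, \ref{lem_xi} and \ref{lem_uni_sing}. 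Your tracking of the $\gamma$- and $\e$-independence (dissipation discarded on the good side after Young's inequality; $c_{K,p}$ uniform in $\e$ by hypothesis) matches the paper's argument.
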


\subsection{Compactness in $\e>0$}

The results obtained in the previous subsection provide compactness in appropriate spaces for the family of solutions $(\mu^\e)_{\e>0}$ to the rescaled equation \eqref{app_rkin}.

\begin{lemma}\label{lem:compact_mu}
	Let $p\in[2,\infty)$ and the assumptions of Theorem~\ref{thm:entropy-linear} and Lemma~\ref{thm:regularity-summary} be satisfied. In particular, the family of solutions
	$(\mu^\e)_{\e>0}$ of \eqref{app_rkin}  satisfies inequalities \eqref{eq:entropy-linear} and \eqref{eq:regularity-summary}.
	
	Then there exists a curve $\mu\in \calC([0,T_p];\calP(\R^d\times\R^d))\cap L^\infty([0,T_p];L^p(\R^d\times\R^d))$ such that
\bq\label{mu_conv}
	\mu_t^\e \rightharpoonup \mu_t\quad\text{weakly in $L^p(\R^d\times\R^d)$ for all $t\in[0,T_p]$}
\eq
for an appropriate subsequence (not relabelled) as $\e\to 0$.

\end{lemma}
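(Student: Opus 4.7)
My plan is to combine the uniform moment and integrability bounds with a weak-equicontinuity-in-time estimate derived from the equation itself, then run a diagonal Arzel\`a--Ascoli argument in the narrow topology, and finally upgrade to weak $L^p$-convergence pointwise in time.

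First, I would collect the uniform estimates that the hypotheses provide. The entropy inequality \eqref{eq:entropy-linear}, together with the Donsker--Varadhan-type argument used in Remark~\ref{rmk:2m}, yields a uniform-in-$\e$ bound on $\Ent(\mu_t^\e|\scrN^{2d})$ and in particular on the second moments
\[
\sup_{\e>0}\sup_{t\in[0,T_p]}\intrr (|x|^2+|v|^2)\,\mu_t^\e(dxdv) \le C_1.
\]
This gives tightness of $\{\mu_t^\e\}_{\e>0}$ in $\calP(\R^d\times\R^d)$ uniformly in $t$. The regularity estimate \eqref{eq:regularity-summary} applied with $k=0$ gives $\|\mu_t^\e\|_{L_{H^\e}^p}\le M_0$; since $H^\e\ge 0$ and $\Phi^\e\to\Phi$ locally uniformly, this in turn yields $\|\mu_t^\e\|_{L^p(\R^d\times\R^d)}\le M_0$ uniformly in $\e$ and $t\in[0,T_p]$.

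Next, I would establish an equicontinuity-in-time estimate in a negative norm. Using the weak formulation of \eqref{app_rkin}, for any $\varphi\in \calC_c^\infty(\R^d\times\R^d)$ and $0\le s<t\le T_p$,
\[
|\langle\varphi,\mu_t^\e-\mu_s^\e\rangle|\le \gamma\int_s^t\intrr\Bigl(|v||\nabla_x\varphi|+|\sfF^\e(x,\rho_r^\e)||\nabla_v\varphi|+\gamma|\Delta_v\varphi|+\gamma|v||\nabla_v\varphi|\Bigr)d\mu_r^\e\,dr.
\]
Using the moment bound from the first paragraph, the assumption $(\textbf{A}_\Phi^1)$ for $\nabla\Phi^\e$, and Lemma~\ref{thm:regularity-summary} (which gives $\|\nabla K^\e\star\rho_r^\e\|_{L^\infty}\le c_{K,p}(1+M_0)$ uniformly in $\e$), I would obtain
\[
|\langle\varphi,\mu_t^\e-\mu_s^\e\rangle|\le C(\gamma,\varphi)\,|t-s|,
\]
where $C(\gamma,\varphi)$ depends only on $\varphi$ through $\|\nabla_x\varphi\|_\infty+\|\nabla_v\varphi\|_\infty+\|\Delta_v\varphi\|_\infty$ and does not depend on $\e$. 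Combined with the uniform tightness, this yields equicontinuity of $t\mapsto\mu_t^\e$ with respect to a distance metrizing narrow convergence on a tight set (e.g.\ a bounded-Lipschitz metric, or equivalently $\d_1$ on $\calP_2$ balls).

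Then I would apply a refined Arzel\`a--Ascoli theorem for curves in $(\calP(\R^d\times\R^d),\d_{\text{BL}})$ (cf.\ \cite[Proposition 3.3.1]{AGS05}) to extract a subsequence, not relabelled, and a limit curve $\mu\in\calC([0,T_p];\calP(\R^d\times\R^d))$ such that $\mu_t^\e\to\mu_t$ narrowly for every $t\in[0,T_p]$. To upgrade to weak $L^p$-convergence at each $t$: fix $t$, use $\|\mu_t^\e\|_{L^p}\le M_0$ and Banach--Alaoglu to extract a weak-$L^p$-convergent sub-subsequence; its limit is a density whose associated measure must coincide with the narrow limit $\mu_t$ by uniqueness of limits tested against $\calC_c^\infty$. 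Hence $\mu_t\in L^p(\R^d\times\R^d)$ with $\|\mu_t\|_{L^p}\le M_0$, and any weak-$L^p$-subsequential limit equals $\mu_t$; a standard Urysohn-type argument then gives weak $L^p$-convergence of the full (already extracted) subsequence for every $t$, yielding \eqref{mu_conv}.

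The one delicate point will be guaranteeing that a single subsequence works for \emph{all} $t\in[0,T_p]$ simultaneously; the narrow-topology Arzel\`a--Ascoli step is the essential tool that avoids passing to a countable dense set of times and then diagonalizing, since the equicontinuity from the weak formulation plus the $t$-uniform tightness are exactly the hypotheses needed for that compactness theorem. Note also that the constant $C(\gamma,\varphi)$ in the equicontinuity estimate depends on $\gamma$, but since $\gamma\ge 1$ is fixed here (we are only taking $\e\to 0$), this causes no issue for the compactness in $\e$.
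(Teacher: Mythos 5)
Your proof is correct and follows the same overall architecture as the paper's: uniform $L^p$-bounds from \eqref{eq:regularity-summary}, tightness from the entropy/moment bounds, a time-equicontinuity estimate, a refined Arzel\`a--Ascoli argument in the narrow topology to get one subsequence converging for \emph{all} $t$, and finally lower semicontinuity plus Banach--Alaoglu to identify the limit as a weak-$L^p$ limit. The one genuine difference is the source of the equicontinuity estimate. The paper writes \eqref{app_rkin} as a continuity equation $\partial_t\mu_t^\e+\nabla\cdot(\mu_t^\e\mathsf{v}_t^\e)=0$ and uses the dissipation term in \eqref{eq:entropy-linear} (together with the moment and force bounds) to get $\sup_{\e>0}\int_0^{T_p}\|\mathsf{v}_t^\e\|_{L^2(\mu_t^\e)}^2\,dt<\infty$, whence $\d_1(\mu_t^\e,\mu_s^\e)\le c\sqrt{|t-s|}$ by Kantorovich duality; this gives a metric modulus of continuity directly, since only $\|\nabla\varphi\|_{L^\infty}$ enters. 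You instead keep the second-order term $\Delta_v\varphi$ in the weak formulation and avoid invoking the dissipation, which is more elementary, but your constant $C(\gamma,\varphi)$ then depends on second derivatives of $\varphi$, so the step ``combined with tightness, this yields equicontinuity in a metric for narrow convergence'' needs one more standard ingredient you should make explicit: either restrict to a countable family of smooth test functions and diagonalize, or mollify/truncate a bounded Lipschitz $f$ at scale $\delta$ and optimize ($\|\Delta f_\delta\|_\infty\lesssim\delta^{-1}$ gives $\d_{\text{BL}}(\mu_t^\e,\mu_s^\e)\lesssim\sqrt{|t-s|}$, using tightness to handle the lack of compact support). With that spelled out, both routes deliver the same compactness, and your version has the mild advantage of not needing Theorem~\ref{thm:entropy-linear}'s dissipation functional at this stage, only its moment consequences.
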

\begin{proof}
	Since $H^\e\ge 0$, we also obtain from Lemma~\ref{thm:regularity-summary} the uniform estimate
\[
	\sup_{\e>0}\sup_{t\in[0,T_p]} \|\mu_t^\e\|_{L^p} <\infty\,.
\]
Hence, the sequence $(\mu_t^\e)_{\e>0}\subset L^1(\R^d\times\R^d)$ is equi-integrable for every $t\in[0,T_p]$. In particular, it is tight in $\calP(\R^d\times\R^d)$ for every $t\in[0,T_p]$.

Now let $(\mathsf{v}_t^\e)_{t\in(0,T_p)}$ be the vector field associated to $\mu^\e$, i.e.\
\[
	\partial_t\mu_t^\e + \nabla\cdot(\mu_t^\e \mathsf{v}_t^\e) = 0\qquad\text{in distributions}.
\]
From Theorem~\ref{thm:entropy-linear}, we deduce that $\mathsf{v}_t^\e \in L^2(\mu_t^\e)$ for almost every $t\in(0,T_p)$, and that
\[
	c_{\mathsf{v}}:=\sup_{\e>0}\int_0^{T_P} \|\mathsf{v}_t^\e\|_{L^2(\mu_t^\e)}^2\,dt <\infty\,,
\]
from which we obtain, for all $\varphi\in\calC_c^\infty(\R^d\times\R^d)$,
\begin{align*}
	\langle\varphi,\mu_t^\e\rangle - \langle\varphi,\mu_s^\e\rangle &= \int_s^t\intrr\langle \nabla\varphi,\mathsf{v}_r^\e\rangle\, d\mu_r^\e \,dr \le \|\nabla\varphi\|_{L^\infty}\int_s^t \|\mathsf{v}_r^\e\|_{L^2(\mu_r^\e)}\,dr\,.
\end{align*}	
Consequently, we find by the Monge--Kantorovich duality that
\[
	\d_1(\mu_t^\e,\mu_s^\e) \le c_{\mathsf{v}}\sqrt{|t-s|}\,,\qquad \text{for every $0\le s<t \le T_p$\,.}
\]
i.e.\ $(\mu^\e)_{\e>0}\subset\calC([0,T_p];\calP(\R^d\times\R^d))$ is equi-continuous, which provides the existence of a curve $\mu\in \calC([0,T_p];\calP(\R^d\times\R^d))$ such that \cite{AGS05}
\[
	\mu_t^\e \to \mu_t\quad\text{narrowly in $\calP(\R^d\times\R^d)$ for all $t\in[0,T_p]$}\,.
\]
Due to the lower semicontinuity of $L^p$-norms with respect to narrow convergence, we then deduce that $\mu_t\in L^p(\R^d\times\R^d)$ for every $t\in[0,T_p]$, and consequently $\mu\in L^\infty([0,T_p];L^p(\R^d\times\R^d))$. It is then possible to extend the previous convergence to the convergence \eqref{mu_conv}.
\end{proof}

As for the first marginal $\rho^\e = \pi^x_\#\mu^\e$ of $\mu^\e$, we obtain a stronger convergence that is needed to pass to the limit in the bilinear term $\langle (K\star\rho^\e)\cdot\nabla\varphi, \mu^\e\rangle$. For this, we make use of a classical result that characterizes strong convergence in $L^p$-spaces (Kolmogorov--Riesz--Fr\'echet theorem).

\begin{proposition}[cf.\ {\cite[Section~4.5]{Brezis2011}}]\label{FK_thm} Let $1 \leq p < \infty$. A family $\mathcal{F} \subset L^p(\R^d)$ is relatively compact if and only if 
\begin{itemize}
\item[(i)] $\mathcal{F}$ is uniformly bounded in $L^p(\R^d)$, i.e., there exists $M > 0$ such that 
\[
\|f\|_{L^p} \leq M \quad \mbox{for all f $\in \mathcal{F}$},
\]
\item[(ii)] $\mathcal{F}$ is equi-integrable in $L^p(\R^d)$, i.e., 
\[
\lim_{R \to \infty} \int_{|x| > R}  |f|^p\,dx = 0 \quad \mbox{uniformly on $\mathcal{F}$},
\]
\item[(iii)] $\mathcal{F}$ is equi-continuous in $L^p(\R^d)$, i.e., 
\[
\lim_{a \to 0} \intr |f(x - a) - f(x)|^p\,dx = 0\quad \mbox{uniformly on $\mathcal{F}$}.
\]
\end{itemize}
\end{proposition}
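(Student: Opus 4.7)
The plan is to establish the two directions separately, with sufficiency being the substantive one.

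The necessity direction (relative compactness $\Rightarrow$ (i)--(iii)) follows from a standard finite-cover argument. Given $\e > 0$, cover $\calF$ by finitely many $L^p$-balls of radius $\e$ around functions $f_1,\dots,f_N$. Uniform boundedness (i) is immediate from the triangle inequality. For (ii), pick $R$ so large that each individual $f_j$ has $L^p$-mass at most $\e^p$ outside $B_R$; by another application of the triangle inequality this transfers to every $f\in\calF$ with a universal constant. Condition (iii) follows identically, using the classical fact that translation is continuous in $L^p(\R^d)$ applied to the finite centre set $\{f_1,\dots,f_N\}$.

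For sufficiency, my plan is to combine mollification with Arzelà--Ascoli and a diagonal extraction. Fix a standard mollifier $\rho_\delta\in\mc_c^\infty(\R^d)$ and consider $\calF_\delta := \{f \star \rho_\delta : f\in\calF\}$. Writing $f\star\rho_\delta(x) - f(x) = \intr(f(x-y)-f(x))\rho_\delta(y)\,dy$ and applying Minkowski's integral inequality, assumption (iii) yields
\[
\sup_{f\in\calF}\|f\star\rho_\delta - f\|_{L^p(\R^d)} \to 0 \quad \text{as } \delta\to 0.
\]
For each fixed $\delta$, Young's inequality together with (i) gives a uniform $L^\infty$ bound on $\calF_\delta$, while $\nabla(f\star\rho_\delta) = f\star\nabla\rho_\delta$ gives a uniform Lipschitz bound (with constant depending on $\delta$, which is acceptable). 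Hence on every compact set $K\subset\R^d$, Arzelà--Ascoli gives relative compactness of $\calF_\delta|_K$ in $C(K)$, and thus in $L^p(K)$.

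To assemble an $L^p(\R^d)$-convergent subsequence from a given sequence $(f_n)\subset\calF$, pick sequences $R_k\to\infty$ and $\delta_k\to 0$ with $R_k$ large enough (via (ii)) that the $L^p$-mass of every $f\in\calF$ outside $B_{R_k}$ is at most $1/k$, and $\delta_k$ small enough that $\sup_{f\in\calF}\|f\star\rho_{\delta_k} - f\|_{L^p}\le 1/k$. For each $k$, extract via Arzelà--Ascoli a subsequence along which $f_n\star\rho_{\delta_k}$ converges in $L^p(B_{R_k})$, then diagonalize. A three-term triangle inequality (mollified approximation error on either end plus the Arzelà--Ascoli convergence in the middle) combined with the uniform tail estimate shows the diagonal subsequence is Cauchy in $L^p(\R^d)$. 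The main bookkeeping obstacle is simultaneously balancing the three small quantities---mollification error, truncation error, and Arzelà--Ascoli error---along a single extracted subsequence. A slicker alternative, and the route taken in Brezis's textbook, replaces the mollification step by local averages $f_r(x) := |B_r|^{-1}\int_{B_r(x)} f(y)\,dy$; Jensen's inequality then controls $\|f_r - f\|_{L^p}$ directly via the modulus supplied by (iii), while the local $L^\infty$ and Lipschitz bounds come from Hölder's inequality on $B_r$, and the remainder of the diagonal argument proceeds as above.
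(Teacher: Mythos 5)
The paper does not prove this proposition---it is the classical Kolmogorov--Riesz--Fr\'echet compactness criterion, stated with a citation to Brezis's textbook and invoked as a black box in the proof of Lemma~\ref{lem:rho_conv}. Your argument (finite $\varepsilon$-nets for necessity; mollification, uniform $L^\infty$ and Lipschitz bounds via Young's inequality, Arzel\`a--Ascoli on an exhausting sequence of balls, and a diagonal extraction balanced against the uniform tail and mollification errors for sufficiency) is the standard textbook proof and is correct, coinciding up to the cosmetic choice of mollifiers versus the local averages you mention with the argument in the cited reference.
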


We apply the above theorem with $f = \rho$ and $p=1$ to obtain the following result:

\begin{lemma}\label{lem:rho_conv}
	Let $p\in[2,\infty)$ and the assumptions of Theorem~\ref{thm:entropy-linear} and Lemma~\ref{thm:regularity-summary} be satisfied. In particular, the family of solutions $(\mu^\e)_{\e>0}$ of \eqref{app_rkin} satisfies inequalities \eqref{eq:entropy-linear} and \eqref{eq:regularity-summary}.
	
	For each $t\in[0,T_p]$, the family of marginals $(\rho_t^\e)_{\e>0}$ of $(\mu_t^\e)_{\e>0}$ is relatively compact in $L^1(\R^d)$. Consequently, there exists a limit curve $\rho\in \calC([0,T_p], \calP(\R^d))\cap L^\infty([0,T_p];L^p(\R^d))$ such that
	\begin{align}\label{rho_conv}
		\rho_t^\e\to \rho_t\quad\text{strongly in\; $L^1(\R^d)$}\quad\text{for almost every $t\in(0,T_p)$}
	\end{align}
	for an appropriate subsequence (not relabeled) as $\e\to 0$. 
\end{lemma}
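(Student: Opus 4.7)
The plan is to apply the Kolmogorov--Riesz--Fr\'echet theorem (Proposition~\ref{FK_thm}) with $p=1$ to upgrade the weak-$L^p$ convergence of $\mu_t^\e\rightharpoonup \mu_t$ obtained in Lemma~\ref{lem:compact_mu} to strong $L^1$ convergence of the marginals $\rho_t^\e = \pi^x_\#\mu_t^\e$. We must verify the three conditions (i)--(iii) of Proposition~\ref{FK_thm}; condition (i) is immediate from $\|\rho_t^\e\|_{L^1}=1$. Once compactness is established, the strong $L^1$-limit of any subsequence must coincide with $\rho_t := \pi^x_\#\mu_t$ by uniqueness (weak-$L^1$ limits are preserved under strong $L^1$ convergence), so the entire family converges.

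For condition (ii) (equi-integrability at infinity), I would reuse the Donsker--Varadhan argument already carried out in Remark~\ref{rmk:2m}. Specifically, Theorem~\ref{thm:entropy-linear} supplies the bound $\Ent(\mu_t^\e|\Gamma^{-\gamma}_\#\scrN^{2d})\le M(\mu_0^\e,t)$ uniformly in $\e$, which combined with the dual variational characterization applied to $\varphi(x,v) = (|x+v/\gamma|^2+|v|^2)/4$ yields a uniform second moment bound $\sup_{\e>0}\intrr(|x|^2+|v|^2)\,d\mu_t^\e \le C$. Taking the $x$-marginal and applying Chebyshev's inequality gives
\[
    \int_{|x|>R}\rho_t^\e(x)\,dx \le \frac{1}{R^2}\intr |x|^2\,\rho_t^\e(dx) \le \frac{C}{R^2}\,,
\]
which tends to zero as $R\to\infty$, uniformly in $\e>0$.

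For condition (iii) (equi-continuity in space), I would invoke the $k=1$ case of Lemma~\ref{thm:regularity-summary} together with the implied regularity lemma (Lemma~\ref{lem:lp-rho-u}) to conclude $\|\nabla_x\rho_t^\e\|_{L^p(\R^d)} \le C$ uniformly in $\e$ for almost every $t\in(0,T_p)$ (which is why the final statement only claims a.e.\ convergence). The gradient is only in $L^p$, not $L^1$, so the standard estimate must be localized: for any $R>0$ and $|a|<R/2$,
\begin{align*}
\intr |\rho_t^\e(x-a)-\rho_t^\e(x)|\,dx &\le \int_{B_{2R}}|a|\,|\nabla\rho_t^\e(x-\theta a)|\,dx\,d\theta + 2\int_{|x|>R}\rho_t^\e(x)\,dx \\
&\le |a|\,|B_{2R}|^{1-1/p}\,\|\nabla_x\rho_t^\e\|_{L^p} + \frac{2C}{R^2}\,,
\end{align*}
where I used H\"older on the inner region. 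Choosing $R$ large to control the tail, then $|a|$ small, gives equi-continuity uniformly in $\e$.

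Having verified (i)--(iii), Proposition~\ref{FK_thm} yields relative $L^1$-compactness of $(\rho_t^\e)_{\e>0}$ for a.e.\ $t\in(0,T_p)$, and the above uniqueness argument identifies the limit as $\rho_t$. The limit curve $\rho$ inherits continuity in $t$ from Lemma~\ref{lem:compact_mu} (via the continuous projection $\pi^x_\#$), belongs to $L^\infty([0,T_p];L^p(\R^d))$ by lower semicontinuity of the $L^p$ norm under narrow convergence, and satisfies $\rho_t \in \calP(\R^d)$ for every $t$. The main subtlety, and the step requiring the most care, is condition (iii): the $L^p$ bound on $\nabla_x\rho_t^\e$ is not directly compatible with $L^1$ equi-continuity, and one must combine it with the tail estimate from (ii) via a localization argument.
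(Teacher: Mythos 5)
Your proof is a genuinely different route from the paper's for the key step, condition (iii) of Proposition~\ref{FK_thm}. Conditions (i) and (ii) are handled exactly as in the paper (mass conservation and the Chebyshev/second-moment argument), and your identification of the limit and the passage to the curve $\rho$ are fine. But for equi-continuity the paper does \emph{not} use a gradient bound on $\rho_t^\e$: it writes $\mu_t^\e$ via Duhamel's formula with the explicit Green function of the linear kinetic Fokker--Planck operator, integrates out the velocity, and estimates translations of $\rho_t^\e$ by translations of the Gaussian $\scrN^d$ and $\nabla\scrN^d$, using only the $L^\infty$ bound on $\nabla K^\e\star\rho_t^\e$ (the $k=0$ part of Lemma~\ref{thm:regularity-summary}) and the second moment bound. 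Your argument instead invokes the $k=1$ case of \eqref{eq:regularity-summary} together with Lemma~\ref{lem:lp-rho-u} to get $\sup_\e\|\nabla_x\rho_t^\e\|_{L^p}\le C$ and then localizes with H\"older plus the tail estimate. That localization is sound (modulo the cosmetic point that the tail contribution of the shifted density should be estimated over $\{|y|>R/2\}$ after a change of variables, which only changes constants).

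The genuine gap is one of generality: your proof silently upgrades the hypotheses to $k=1$. Lemma~\ref{lem:rho_conv} is needed in the passage to the limit $\e\to 0$ to handle the bilinear term, and Theorem~\ref{thm:main-app} asserts existence also in the case $k=0$, where only the $L^p_H$ bound \eqref{eq:regularity-summary} with $k=0$ is available and no uniform control on $\nabla_x\mu_t^\e$ (hence none on $\nabla_x\rho_t^\e$) is assumed. In that case your verification of condition (iii) has no starting point, whereas the paper's Green-function argument goes through. So as written your proof establishes the lemma only under the strictly stronger $W^{1,p}_{x,H}$ assumption on the data; to cover the full statement you would need either the paper's hypoelliptic-smoothing/Duhamel argument or some other mechanism for spatial equi-continuity that does not rely on first-order $x$-derivatives of $\mu_t^\e$.
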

\begin{proof}
	Due to the mass conservation, we readily see Proposition \ref{FK_thm}(i). We also easily find from the moment estimate of $\mu_t^\e$ in $x$ that
\[
\lim_{R \to \infty}\int_{|x| > R} \rho_t^\e\,dx \leq  \lim_{R \to \infty}\frac{1}{R^2}\sup_{\e>0}\intrr |x|^2 \mu_t^\e\,dxdv = 0,
\]
and this shows that Proposition \ref{FK_thm}(ii) holds for $p=1$.

We now check (iii) in Proposition \ref{FK_thm}. 
Let us introduce the Green function $\mathcal{G}^\gamma _t(x,v,\xi,\nu)$ for the linear Vlasov--Fokker--Planck equation  \cite{Bou93}, which is a fundamental solution of
\[
\pa_t \mathcal{G}^\gamma _t + v \cdot \nabla_x \mathcal{G}^\gamma _t = \gamma \nabla_v \cdot (\nabla_v \mathcal{G}^\gamma _t + v \mathcal{G}^\gamma _t),\qquad \mathcal{G}^\gamma _0(x,v,\xi,\nu) = \delta_{x,v}(\xi,\nu)\,.
\]
Then by employing the Green function $\mathcal{G}^\gamma$, we write our solution of \eqref{app_kin} in the integral form:
\[
\mu_t^\e(x,v) = \intrr \mathcal{G}_t^\gamma(x,v,\xi,\nu) \,\mu_0^\e(d\xi d\nu) +\int_0^t \intrr \nabla_\nu \mathcal{G}_s^\gamma(x,v,\xi,\nu) \cdot \sfF^\e(\xi,\rho_{t-s}^\e)\,\mu_{t-s}^\e(d\xi d\nu)\, ds.
\]
We next introduce 
\[
\mathcal{K}_t^\gamma(x,\xi,\nu) := \intr \mathcal{G}_t^\gamma(x,v,\xi,\nu)\,dv\,,
\]
and obtain
\[
\rho_t^\e(x) = \intrr \mathcal{K}_t^\gamma(x,\xi,\nu) \,\mu_0^\e(d\xi d\nu) + \int_0^t \intrr \nabla_\nu \mathcal{K}_s^\gamma(x,\xi,\nu) \cdot \sfF^\e(\xi,\rho_{t-s}^\e)\,\mu_{t-s}^\e(d\xi d\nu)\, ds,
\]
where $\mathcal{K}_t^\gamma$ and $\nabla_\nu\mathcal{K}_t^\gamma$ can be explicitly expressed as
\begin{align*}
\mathcal{K}_t^\gamma(x,\xi,\nu) &= \frac{1}{(2\gamma d(t))^{d/2}} \scrN^d\lt(\frac{x - \xi - \sigma(t) \nu}{\sqrt{2\gamma d(t)}}\rt),\\
(\nabla_\nu \mathcal{K}_t^\gamma)(x,\xi,\nu) &= \frac{-\sigma(t)}{(2\gamma d(t))^{(d+1)/2}} (\nabla \scrN^d)\lt(\frac{x - \xi - \sigma(t) \nu}{\sqrt{2\gamma d(t)}}\rt),
\end{align*}
with 
\[
\sigma(t) = \frac{1 - e^{-\gamma t}}{\gamma}, \qquad d(t) = \int_0^t \sigma(s)^2\,ds\,.
\]
For any $a \in \R$, we use these observations to estimate
\[
\intr\lt| \mathcal{K}_t^\gamma(x-a,\xi,\nu) -\mathcal{K}_t^\gamma(x,\xi,\nu) \rt|dx = \|\scrN^d(\cdot-a) -\scrN^d\|_{L^1}\,,
\]
and
$$\begin{aligned}
\intr\lt| \nabla_\nu\mathcal{K}_t^\gamma(x-a,\xi,\nu) - \nabla_\nu\mathcal{K}_t^\gamma(x,\xi,\nu) \rt|dx = \frac{\sigma(t)}{(2\gamma d(t))^{1/2}}\|\nabla\scrN^d(\cdot-a) - \nabla\scrN^d\|_{L^1}\,.
\end{aligned}$$
Together, this yields
$$\begin{aligned}
\|\rho_t^\e(\cdot-a) - \rho_t^\e\|_{L^1}
&\leq \|\scrN^d(\cdot-a) -\scrN^d\|_{L^1} \cr
&\hspace{-4em}+ \|\nabla\scrN^d(\cdot-a) - \nabla\scrN^d\|_{L^1} \int_0^t \frac{\sigma(s)}{(2\gamma d(s))^{1/2}}  \lt(\intrr |\sfF^\e(\xi,\rho_{t-s}^\e)|\,\mu_{t-s}^\e(d\xi d\nu)\rt)ds\,.
\end{aligned}$$
We then estimate the second term on the right hand side of the above inequality. Note that 
\[
|\sfF^\e(\xi,\rho_t)| 
\leq c_{\Phi} (1 + |\xi|) + c_0,\qquad c_0 = \sup\nolimits_{t\in [0,T_p]}\|\nabla K^\e \star \rho_t^\e\|_{L^\infty}\,,
\]
and the second moment bound estimate of $\mu_t$ in $x$ gives
\bq\label{bdd_f}
\intrr |\sfF^\e(\xi,\rho_{t-s}^\e)|\,\mu_{t-s}^\e(d\xi d\nu) \leq C\lt(1 + \sup_{\e>0}\intrr |x|^2\,\mu_0^\e(dx dv) \rt).
\eq
Furthermore, by Taylor's theorem, we get
\[
\sigma(s) \simeq s \quad \mbox{and} \quad d(s) \simeq s^3\quad\text{implying}\quad \frac{\sigma(s)}{(2\gamma d(s))^{1/2}} \simeq s^{-1/2}\,.
\]
Combining this with \eqref{bdd_f} deduces 
\[
\int_0^t \frac{\sigma(s)}{(2\gamma d(s))^{1/2}}  \intrr |\sfF^\e(\xi,\rho_{t-s}^\e)|\,d \mu_{t-s}^\e(\xi,\nu)\,ds \leq C t^{1/2},
\]
where $C>0$ is independent of $t\in(0,T_p)$, $a\in\R^d$ and $\e>0$. On the other hand, by the Lebesgue dominated convergence theorem, we easily find
\[
\lim_{a \to 0}\| \scrN^d(\cdot-a) -\scrN^d \|_{L^1} =0= \lim_{a \to 0} \| \nabla\scrN^d(\cdot-a) -\nabla\scrN^d\|_{L^1}\,. 
\]
Thus we have
\[
\lim_{a \to 0 }\|\rho_t^\e(\cdot-a) - \rho_t^\e\|_{L^1} =0\,,
\]
and this asserts Proposition \ref{FK_thm}(iii). Consequently, we find a compact set $K\subset L^1(\R^d)$ for which
\[
	\rho_t^\e \in K\qquad\text{for all $t\in[0,T_p]$ and $\e>0$}\,.
\]

To prove the second part of the statement, we notice that $(\rho_t^\e)$ satisfies
\[
	\partial_t\rho_t^\e + \nabla_x\cdot \lt(\intr v\,\mu_t^\e(x,v)\,\leb^{d}(dv)\rt)=0\qquad\text{in distributions\,,}
\]
which then allows us to obtain the uniform estimate
\[
	\d_1(\rho_{t+h}^\e,\rho_t^\e) \le h\lt(\sup_{\e>0}\sup_{r\in[0,T_p]} \sqrt{\intrr |v|^2\,d\mu_t^\e}\rt)\qquad \text{for all $h\ll 1$,\; $\e>0$\,.}
\]
We can then finally conclude that the family $(\rho^\e)_{\e>0}\subset L^1((0,T_p);L^1(\R^d))$ is relatively compact (cf.\ \cite{RossiSavare1003,Simon1986}). In particular, we obtain a curve $\rho\in \calC([0,T_p];\calP(\R^d))\cap L^1((0,T_p);L^1(\R^d))$ such that the required convergence \eqref{rho_conv} holds. Due to Lemma~\ref{lem:lp-rho-u}, we can further deduce, as in Lemma~\ref{lem:compact_mu}, that $\rho\in L^\infty([0,T_p];L^p(\R^d))$ simply from the lower semicontinuity of $L^p$-norms.
\end{proof}

\subsection{Passing to the limit $\e \to 0$} 

We begin by noting that $\Phi^\e$ converges to $\Phi$ locally uniformly as $\e\to 0$. Hence, also $e^{-\beta H^\e}$ converges to $e^{-\beta H}$ locally uniformly for any $\beta\in\R$. Consequently, for any $\varphi\in\calC_c^\infty(\R^d\times\R^d)$, the convergence \eqref{mu_conv} yields (as $\e\to 0$)
\[
	\langle \varphi\, e^{(1-1/p)H^\e}, \mu_t^\e \rangle \longrightarrow \langle \varphi\, e^{(1-1/p)H}, \mu_t \rangle \qquad\text{for every $t\in[0,T_p]$\,.}
\]
In particular, 
\[
	\text{$\mu_t^\e e^{(1-1/p)H^\e}$ converges narrowly to $\mu_t e^{(1-1/p)H}$ for every $t\in[0,T_p]$,}
\]
and by lower-semicontinuity, the inequalities \eqref{eq:entropy-linear} and \eqref{eq:regularity-summary} hold true also for $\mu_t$ instead of $\mu_t^\e$, i.e.\
\[
	\sup\nolimits_{t\in[0,T_p]}\Bigl(\Ent(\mu_t|\Gamma^{-\gamma}_\#\scrN^{2d}) + \|\mu_t\|_{W^{k,p}_{x,H}}\Bigr) <\infty\qquad\text{uniformly in $\gamma\ge 1$\,.}
\]
Following the arguments made in Remark~\ref{rmk:2m}, we conclude that the previous estimate holds true also for $\Ent(\mu_t|\scrN^{2d})$ in place of $\Ent(\mu_t|\Gamma^{-\gamma}_\#\scrN^{2d})$.

To pass to the limit in our equation \eqref{app_rkin}, it suffices to prove that
\begin{align}\label{conv_dist}
	(\nabla K^\e \star \rho^\e_t) \mu_t^\e \to (\nabla K \star \rho_t) \mu_t\qquad\text{in distributions for almost every $t\in(0,T_p)$\,.}
\end{align}
Indeed, the other terms are linear and the convergence $\nabla\Phi^\e \mu_t^\e\to \nabla\Phi \mu_t$ in distribution follows from the local uniform convergence of $\nabla\Phi^\e$ to $\nabla\Phi$ and the narrow convergence of $\mu_t^\e$ to $\mu_t$.

To prove the convergence \eqref{conv_dist}, we write
\begin{align*}
&\langle\varphi,(\nabla K^\e \star \rho_t^\e) \mu_t^\e - (\nabla K \star \rho_t) \mu_t \rangle\cr
&\qquad = \langle \varphi \nabla K \star \rho_t,\mu_t^\e - \mu_t\rangle + \langle \varphi(\nabla K^\e - \nabla K) \star \rho_t^\e ,\mu_t^\e\rangle + \langle \varphi \nabla K \star (\rho_t^\e - \rho_t),\mu_t^\e\rangle \cr
&\qquad =: \mathrm{(I)} + \mathrm{(II)} + \mathrm{(III)}\,.
\end{align*}
Due to the weak convergence provided by \eqref{mu_conv}, we easily obtain $\textrm{(I)}\to 0$ as $\e\to 0$ .

As for $\textrm{(II)}$, we have for almost every $x\in\R^d$ the estimate
\begin{align*}
	((\nabla K^\e - \nabla K)\star\rho_t^\e) (x) 
	&\le \|\nabla K^\e - \nabla K\|_{L^q(B_R)}\|\rho_t^\e\|_{L^{q'}} + \int_{\R^d\setminus B_R(x)} |\nabla K^\e - \nabla K|(x-y)\,\rho_t^\e(dy)\,.
\end{align*}
The first term on the right-hand side converges to zero simply due to the properties of mollification and \eqref{thm:eq:main-app}, while the second terms converges to zero due to the Lebesgue dominated convergence. Together, we obtain
\[
	((\nabla K^\e - \nabla K)\star\rho_t^\e) (x) \longrightarrow 0\qquad\text{as $\e\to 0$ for almost every $x\in\R^d$\,.}
\]
Another application of the Lebesgue dominated convergence then yields $\textrm{(II)}\to 0$ as $\e\to 0$.

%

Now it remains to show the convergence $\mathrm{(III)} \to 0$. For this, we rewrite
\[
	\mathrm{(III)} = \intr \lt( \intr \nabla K(x-y)\rho_t^{\varphi,\e}(x)\,\leb^d(dx)  \rt) (\rho_t^\e - \rho_t)(y)\,\leb^d(dy)\,,
\]
where we set
\[
	\rho_t^{\varphi,\e}(x):= \intr \varphi(x,v) \mu_t^\e(x,v)\,\leb^d(dv)\qquad\text{for almost every $t\in(0,T_p)$}\,.
\]
Due to the strong convergence provided by \eqref{rho_conv}, it suffices to show that
\begin{align}\label{eq:III-estimate}
	\sup_{\e>0}\lt\|\intr \nabla K(x-\cdot)\rho_t^{\varphi,\e}(x)\,\leb^d(dx)\rt\|_{L^\infty} <\infty\,.
\end{align}
However, it turns out that for any $\varphi\in\calC_c^\infty(\R^d\times\R^d)$, 
\[
	\|\rho_t^{\varphi,\e}\|_{L^p} \le \lt(\sup\nolimits_{x\in\R^d}\|\varphi(x,\cdot)\|_{L^{p'}}\rt)\|\mu_t^\e\|_{L^p}\,,
\]
which then gives the required estimate \eqref{eq:III-estimate} due to \eqref{thm:eq:main-app}, and thereby the required convergence \eqref{conv_dist}. All in all, we conclude that our limiting curve $\mu$ is a weak solution of \eqref{app_kin}.


%
%
%
%

\bibliographystyle{myplain}
\bibliography{overdamped-limit}

\end{document}